\documentclass[11pt]{article}

\usepackage[T1]{fontenc}
\usepackage[english]{babel}
\usepackage[a4paper,margin=1in]{geometry}
\usepackage{amsmath,amssymb,amsthm,mathtools,bm}
\usepackage{booktabs,array}
\usepackage{microtype}
\usepackage{hyperref}
\usepackage{enumitem}
\usepackage{cleveref}
\allowdisplaybreaks
\hypersetup{
  hidelinks,
  pdftitle={Intersections of Directed Graphs},
  pdfauthor={Zhanping Yang and Qinghou Zeng},
  pdfsubject={Intersection discrepancy for weighted directed graphs},
  pdfkeywords={discrepancy, directed graphs, weighted graphs, transpositions, relabellings, Gram matrices}
}

\newtheorem{theorem}{Theorem}[section]
\newtheorem{lemma}[theorem]{Lemma}
\newtheorem{corollary}[theorem]{Corollary}
\newtheorem{proposition}[theorem]{Proposition}

\newtheorem{claim}[theorem]{Claim}
\newtheorem{problem}[theorem]{Problem}

\theoremstyle{remark}
\newtheorem{remark}[theorem]{Remark}

\newcommand{\E}{\mathbb{E}}
\newcommand{\disc}{\operatorname{disc}}
\newcommand{\R}{\mathbb{R}}
\newcommand{\1}{\mathbf{1}}

\newcommand{\rank}{\operatorname{rank}}

\newcommand{\ip}[2]{\left\langle #1,#2\right\rangle}
\newcommand{\norm}[1]{\left\lVert #1\right\rVert}
\newcommand{\abs}[1]{\left|#1\right|}

\title{Intersections of Directed Graphs}
\author{
  Zhanping Yang\thanks{Center for Discrete Mathematics, Fuzhou University, Fuzhou, Fujian 350108, China. Email: \texttt{yangzp@163.com}.}
  \and
  Qinghou Zeng\thanks{Corresponding author. Center for Discrete Mathematics, Fuzhou University, Fuzhou, Fujian 350108, China. Research supported by National Key R\&D Program of China (Grant No. 2023YFA1010202) and National Natural Science Foundation of China (Grant No. 12371342). Email: \texttt{zengqh@fzu.edu.cn}.}
}
\date{}

\begin{document}
\maketitle

\begin{abstract}
Given two weighted directed graphs of order \(n\), we study how much their overlap can deviate from its random average under relabelling, and how concentrated the distribution of their intersection is when they are placed at random. Bollob\'as and Scott studied these problems on  weighted $k$-uniform hypergraphs and raised the corresponding questions for directed graphs. In this paper, we study these problems for weighted directed graphs and obtain results analogous to those in the weighted hypergraph setting.
\end{abstract}

\noindent\textbf{2020 Mathematics Subject Classification.} 05C35, 05C69.

\noindent\textbf{Keywords.} discrepancy, directed graph, weighted graph, graph intersection

\section{Introduction}\label{sec:intro}
Graph discrepancy measures how uniformly the edges of a graph are
distributed among its vertex subsets. Let \(G\) be a graph with \(n\) vertices and edge
density \(p\), and write
\begin{equation*}
\disc_p^+(G)=\max_{S\subseteq V(G)}\left(e(G[S])-p\binom{|S|}{2}\right),
\end{equation*}
and
\begin{equation*}
\disc_p^-(G)=\max_{S\subseteq V(G)}\left(p\binom{|S|}{2}-e(G[S])\right),
\end{equation*}
and set $\disc_p(G)=\max\{\disc_p^+(G),\disc_p^-(G)\}$. Erd\H{o}s and Spencer \cite{ErdosSpencer1972} showed that every graph of order \(n\) and density \(1/2\) has discrepancy of order at least \(n^{3/2}\); more generally, the corresponding scale for \(k\)-uniform hypergraphs is \(n^{(k+1)/2}\). These bounds are sharp up to the constant factor, as shown by random examples. Erd\H{o}s, Goldberg, Pach and Spencer \cite{ErdosGoldbergPachSpencer1988} extended this result to graphs of arbitrary density. More precisely, if $2/(n-1)<p<1-2/(n-1)$, then $\disc_p(G)\ge c_p\,n^{3/2}$. Bollob\'as and Scott
\cite{BollobasScottDiscrepancy2006} proved that, if \(p(1-p)\ge 1/n\), then $\disc_p^+(G)\disc_p^-(G)\ge c\,p(1-p)n^3$ for an absolute constant \(c>0\). In particular, $\disc_p(G)\ge c'\sqrt{p(1-p)}\,n^{3/2}$. They also obtained analogous results for uniform hypergraphs. R\"aty, Sudakov and Tomon \cite{RatySudakovTomon} studied the positive discrepancy of a single graph in a wide range of densities. For general background on discrepancy theory, we refer to Beck and S\'os \cite{BeckSosDiscrepancy1995} and Chazelle \cite{CDiscrepancy2000}.

 Bollob\'as and Scott \cite{BollobasScottGraphs2011} subsequently
introduced the intersection discrepancy of two graphs. If $e(G)=p\binom n2$ and $e(H)=q\binom n2$, then a random relabelling has expected overlap \(pq\binom n2\). They define the positive discrepancy of $G$ with respect to $H$ by
\begin{equation*}
\operatorname{disc}^{+}(G, H) = \max_{G' \cong G} |E(G') \cap E(H)| - pq \binom{n}{2},
\end{equation*}
and the negative discrepancy of $G$ with respect to $H$ by
\begin{equation*}
\operatorname{disc}^{-}(G, H) = pq \binom{n}{2} - \min_{G' \cong G} |E(G') \cap E(H)|,
\end{equation*}
and set $\operatorname{disc}(G, H) = \max\{\operatorname{disc}^{+}(G, H), \operatorname{disc}^{-}(G, H)\}$. They proved that, under the moderate-density assumption ${16}/{n}\le p,q\le 1-{16}/{n}$. Then
\begin{equation*}
\disc^+(G,H)\disc^-(G,H)\ge c\,p^4(1-p)^4q^4(1-q)^4 n^3,
\end{equation*}
and consequently $\disc(G,H)\ge c'\,p^2(1-p)^2q^2(1-q)^2 n^{3/2}$. Thus the natural \(n^{3/2}\) scale persists for intersections of two graphs. Random versions of these intersection problems were studied by Bollob\'as and Scott \cite{BollobasScottRandom2015}, who considered random hypergraphs and tournaments, and by Ma, Naves and Sudakov \cite{MaNavesSudakov2015}, who studied the discrepancy of random graphs and hypergraphs.

Bollob\'as and Scott \cite{BollobasScottHypergraphs2015} extended the definition of relative discrepancy to \(k\)-uniform hypergraphs. If $e(G_k)=p\binom{n}{k}$ and $e(H_k)=q\binom{n}{k}$, then 
\begin{equation*}
\disc(G_k,H_k)=\max_{G_k' \cong G_k}\left|\left|E(G_k')\cap E(H_k)\right|-pq\binom{n}{k}\right|.
\end{equation*}
For \(k=3\), the situation is different from that for graphs. Let \(V=A\cup B\) be a partition, let \(G_3\) consist of all triples meeting both \(A\) and \(B\), and let \(H\) be a Steiner triple system on \(V\). Then every relabelling of \(H_3\) contains exactly \(|A||B|/2\) triples
meeting both parts, and hence $\disc(G_3,H_3)=0$. More recently, Luong-Le, Tran and Yang \cite{LTYrelative2025} showed that zero-discrepancy pairs exist for every \(k\ge3\): for each fixed \(k\ge3\) and all sufficiently large \(n\), there are \(k\)-uniform hypergraphs \(G\) and \(H\), with densities bounded away from \(0\) and \(1\), such that $\disc(G_k,H_k)=0$. 

Bollob\'as and Scott \cite{BollobasScottHypergraphs2015} later developed the corresponding theory for weighted \(k\)-uniform hypergraphs, where zero discrepancy can occur and the corresponding structure can be described using an orthogonal decomposition of the space of weightings.
\begin{theorem}[\textbf{Bollob\'as and Scott} \cite{BollobasScottHypergraphs2015}]\label{thm:2015thm1}
Let \(k \geq 2\). For every \(n \geq 2k\), there are weighted hypergraphs \(w_1, \dots, w_k\) with vertex set \([n]\) such that \(w_i([n]) = 0\) and \(\|w_i\|_1 = \binom{n}{k}\) for every \(i\) and, for \(1 \leq i < j \leq k\), we have
\begin{equation*}
\operatorname{disc}(w_i, w_j) = 0.
\end{equation*}
\end{theorem}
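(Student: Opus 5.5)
The plan is to use the orthogonal decomposition of the space \(\R^{\binom{[n]}{k}}\) of weightings of \(k\)-sets under the action of \(S_n\). Let \(V_j\) be the span of functions of the form \(e\mapsto f(e)=\sum_{T\subseteq e,\,|T|=j} g(T)\). Equivalently, \(V_j\) is the image of the up-map from \(\R^{\binom{[n]}{j}}\). Let \(W_j = V_j\cap V_{j-1}^{\perp}\), with \(W_0\) the constants. For \(n\ge 2k\), the classical decomposition of the Johnson scheme gives
\[
\R^{\binom{[n]}{k}}=W_0\oplus W_1\oplus\cdots\oplus W_k .
\]
Each \(W_j\) is a nonzero \(S_n\)-invariant subspace; indeed it is isomorphic to the irreducible Specht module \(S^{(n-j,j)}\), and these modules are pairwise non-isomorphic. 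I would take \(w_i\) to be a suitably scaled nonzero element of \(W_i\) for \(i=1,\dots,k\).

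To check the conditions, first note that \(w_i\perp W_0\) gives \(w_i([n])=\sum_e w_i(e)=0\). Since \(w_i\neq 0\), I can rescale so that \(\|w_i\|_1=\binom nk\). The weighted discrepancy is
\[
\disc(w_i,w_j)=\max_{\sigma}\Bigl|\sum_e w_i(\sigma e)w_j(e)-\text{(average)}\Bigr| .
\]
The average term is proportional to \(w_i([n])\,w_j([n])=0\). So it suffices that \(\ip{w_i^{\sigma}}{w_j}=0\) for every permutation \(\sigma\). This holds because \(W_i\) is \(S_n\)-invariant, so \(w_i^\sigma\in W_i\), and \(W_i\perp W_j\) for \(i\ne j\) by construction.

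The main step is to establish that the \(W_j\) are nonzero and mutually orthogonal \(S_n\)-invariant subspaces. Orthogonality and invariance are immediate from the definition, since each \(V_j\) is invariant and the \(V_j\) are nested (\(V_{j-1}\subseteq V_j\) because each \((j-1)\)-set lies in \(n-j+1\) \(j\)-sets, up to the factor \(k-j+1\)). Nonvanishing is the real content. It requires \(\dim V_j=\binom nj\), i.e.\ the inclusion map from \(j\)-sets to \(k\)-sets is injective for \(j\le k\le n-j\). This is Gottlieb's theorem (full rank of inclusion matrices), and it gives
\[
\dim W_j=\binom nj-\binom n{j-1}>0 \quad\text{for } j\le n/2 ,
\]
which holds here since \(j\le k\le n/2\). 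If I want to avoid citing it, I can exhibit explicit nonzero vectors instead. For disjoint pairs \((a_1,b_1),\dots,(a_j,b_j)\), which exist because \(n\ge 2k\), take
\[
f(e)=\sum_{T} \prod_{t}(\pm 1),
\]
the signed sum over the \(2^j\) transversal \(j\)-sets \(T\), with sign \((-1)^{\#\{b\text{'s in }T\}}\). This \(f\) lies in \(V_j\), is orthogonal to \(V_{j-1}\) by the usual alternating-sum cancellation, and is nonzero on the \(k\)-set containing \(a_1,\dots,a_j\) together with \(k-j\) vertices outside all pairs.
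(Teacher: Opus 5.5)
Your argument is correct. The paper only cites this result from Bollob\'as and Scott and gives no proof, but your route is the one it uses for its directed analogue, Theorem~\ref{thm:zero-family}: decompose the weightings into pairwise orthogonal relabelling-invariant subspaces, take an explicit nonzero vector in each non-constant summand built from disjoint pairs (your signed transversals play the role of the paper's vectors built from $e_1-e_2$ and $e_3-e_4$), and rescale to the required $\ell_1$-norm. Since your explicit construction already shows $W_j\ne 0$ for $1\le j\le k$ when $n\ge 2k$, you do not need Gottlieb's theorem or the Specht-module identification.
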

However, they found that things look very different with $k+1$ weighted hypergraphs.
\begin{theorem}[\textbf{Bollob\'as and Scott} \cite{BollobasScottHypergraphs2015}]\label{thm:2015thm2}
For every \(k \geq 1\), there are constants \(c> 0\) such that the following holds. Let \(n \geq 2k\), and suppose that \(w_1, \dots, w_{k+1}\) are weighted \(k\)-uniform hypergraphs on \([n]\) such that \(w_i([n]) = 0\) and \(\|w_i\|_1 = \binom{n}{k}\) for every \(i\). Then there are distinct \(i\) and \(j\) such that
\begin{equation*}
\operatorname{disc}^{+}(w_i, w_j) \operatorname{disc}^{-}(w_i, w_j) \geq c n^{k+1}.
\end{equation*}
\end{theorem}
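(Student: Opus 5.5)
We work in the space $W=\R^{\binom{[n]}{k}}$ of weightings with zero total weight, and decompose it into the standard $S_n$-invariant orthogonal pieces $W=V_1\oplus\cdots\oplus V_k$. Here $V_j$ is the part of the weighting that "depends genuinely on $j$-sets": it is the image of $\R^{\binom{[n]}{j}}$ under the up-operator, intersected with the orthogonal complement of the image from level $j-1$. These are the irreducible components $S^{(n-j,j)}$ of the permutation module, and each appears with multiplicity one.

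For $w,w'\in W$ and a uniformly random $\sigma\in S_n$, let $X=\ip{w^\sigma}{w'}$. Then $\E X=0$, since both weightings have total weight zero. Moreover $\disc^+(w,w')=\max X$ and $\disc^-(w,w')=-\min X$. Since $\E X=0$ and $X\le \disc^+$, $-X\le\disc^-$, we get
\[
\E X^2\le \disc^+\disc^-.
\]
Indeed, $\E[(\disc^+-X)(X+\disc^-)]\ge0$ gives $\disc^+\disc^- \ge \E X^2$. So it suffices to find $i\ne j$ with $\E X_{ij}^2\ge cn^{k+1}$.

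By Schur orthogonality, the cross terms between distinct components vanish in $\E\ip{w^\sigma}{w'}^2$, and within each multiplicity-free component one gets the exact formula
\[
\E X^2=\sum_{t=1}^k \frac{\norm{P_t w}^2\norm{P_t w'}^2}{\dim V_t}.
\]
Here $\dim V_t=\binom nt-\binom n{t-1}\le n^t$.

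The key steps, in order, are as follows.
\begin{enumerate}[label=(\roman*)]
\item Prove the formula above. Write $\E[(w^\sigma)(w^\sigma)^T]$ as the averaged operator, which by Schur's lemma is $\sum_t \frac{\norm{P_tw}^2}{\dim V_t}P_t$.
\item Show each $w_i$ has substantial mass in some component. Since $\|w_i\|_1=\binom nk$ and there are $\binom nk$ coordinates, Cauchy--Schwarz gives $\norm{w_i}_2^2\ge\binom nk$. Hence for each $i$ there is $t_i$ with $\norm{P_{t_i}w_i}^2\ge \binom nk/k$.
\item Apply the pigeonhole principle. There are $k+1$ weightings and only $k$ components, so some $i\ne j$ share $t=t_i=t_j$.
\item Conclude. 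For this pair,
\[
\E X^2\ge \frac{\binom nk^2}{k^2\dim V_t}\ge \frac{\binom nk^2}{k^2\binom nk}\ge c\,n^k.
\]
\end{enumerate}

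Step (iv) is the main obstacle: the bound comes out as $n^k$, not $n^{k+1}$. The second-moment approach is too weak whenever $\dim V_t$ is as large as $\binom nk$, that is when $t=k$. So the $\ell_2$ argument must be strengthened, and I would proceed in two ways.

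First, for small components I would use the $\ell_1$ normalisation more carefully. If the shared mass lies in $V_t$ with $t<k$, then $\dim V_t\lesssim n^t\le n^{k-1}$, which gives $\E X^2\gtrsim n^{2k}/n^{k-1}=n^{k+1}$. That case is done.

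Second, I need to show that the top component $V_k$ cannot be the only large component for two of the weightings without still forcing large discrepancy. The plan here is to avoid the second moment for $V_k$ and instead argue directly from $\max$ and $\min$. I would fix a transposition $\tau=(a\,b)$ and use that $X(\sigma\tau)-X(\sigma)$ involves only the edges containing $a$ or $b$. A Bollob\'as--Scott style argument, comparing the edge-neighbourhood weightings at vertices via the vertex-degree-type component, then shows that there are permutations where $X$ deviates by $\gtrsim n^{(k+1)/2}$ in both directions.

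Concretely, I expect to show that $\|w\|_1=\binom nk$ forces either $P_{<k}w$ to be large (the first case) or many vertex-links to have large $\ell_2$ variance. Switching two vertices then changes the overlap by about $n^{(k-1)/2}$ independently over roughly $n$ disjoint swaps, and these accumulate to about $n^{(k+1)/2}$ in each direction, giving $\disc^+\disc^-\ge cn^{k+1}$. Making this accumulation rigorous, for instance by a martingale or anticoncentration argument over a sequence of disjoint transpositions, is the step I expect to be hardest.
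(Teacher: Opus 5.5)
\textbf{Where the proposal falls short.} The paper only quotes this theorem from Bollob\'as and Scott. Its own proof of the directed analogue, Theorem~\ref{thm:five}, follows their method, so that is the comparison here. Your preliminary steps are correct: $\E X^2\le\disc^+\disc^-$, the Schur-orthogonality formula, and the case where two weightings share an $\ell_2$-large component $V_t$ with $t<k$ (there $\dim V_t\le\binom n{k-1}$ gives $\E X^2\ge c_kn^{k+1}$). This is a genuinely different and cleaner treatment of the lower components than the transposition method.

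The top component $V_k$, however, carries the whole content of the theorem, and you leave it as a heuristic. The tools you name cannot reach the required scale. For, say, $\pm1$-valued weightings concentrated in $V_k$, the typical size of $X$ is $\sqrt{\E X^2}\asymp n^{k/2}$, so $\disc^\pm\gtrsim n^{(k+1)/2}$ is a deviation of $\sqrt n$ standard deviations. Martingale or anticoncentration arguments, or summing $\approx n$ disjoint swaps of size $n^{(k-1)/2}$ with independent signs, only give deviations of order $\sqrt n\cdot n^{(k-1)/2}=n^{k/2}$. Lemma~\ref{lem:L1-amplification} is exactly this $\sqrt n$ gain. That yields again only $\disc^+\disc^-\gtrsim n^k$.

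\textbf{First missing ingredient: coherent accumulation.} Your count $n\cdot n^{(k-1)/2}$ requires the swaps to add up \emph{coherently}. You must keep only those disjoint transpositions whose individual change $\delta_i$ in $X$ has the favourable sign, and then control the interaction terms from edges meeting several swapped pairs. This is the Bollob\'as--Scott amplification (Lemma~\ref{lem:product-amplification} in the directed setting). Start from a permutation with value $X_0$ and perform each kept transposition independently with probability $p$. The expected change is a polynomial of degree at most $k$ in $p$; Lemma~\ref{lem:tauj} is the quadratic, directed instance. Its derivative at $p=0$ is $\sum_i\delta_i^+$, and its values lie in $[-\disc^--X_0,\disc^+-X_0]$. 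Markov's polynomial inequality applied to the second derivative then bounds $\sum_i\delta_i^+$ by $C_k\sqrt{(\disc^+-X_0)(\disc^++\disc^-)}$. Average over the starting permutation, do the same for swaps of the other sign, and use the smaller of $\disc^\pm$. This gives $\disc^+\disc^-\ge c_kn^2\gamma^2$, where $\gamma$ is the expected absolute change of $X$ under one transposition.

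\textbf{Second missing ingredient: an $\ell_1$ lower bound on $\gamma$.} You need a bound of the form $\gamma\ge c_kn^{(k-1)/2}\,\E_Q|C_{w_i}(Q)|\,\E_{Q'}|C_{w_j}(Q')|$. Here $C_w(Q)$ is the alternating sum of $w$ over the $2^k$ edges of a random box $\{a_1,b_1\}\times\cdots\times\{a_k,b_k\}$; it annihilates every component below $V_k$. This is the hypergraph analogue of $\gamma\ge c\sqrt n\,J_2$ in Lemma~\ref{lem:local}, obtainable by iterating the matching argument of Lemma~\ref{lem:vector-matching}. It must be paired with $\E_Q|C_w(Q)|\ge c_k\|P_kw\|_1/\binom nk$, the analogue of the bound on $\|h_w^+\|_1$ in Lemma~\ref{lem:mass}. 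An $\ell_2$ variance of links controls $\E\delta^2$, not $\E|\delta|$, so it is the wrong quantity under $\ell_1$ normalisation.

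\textbf{Step (iii) can select a bad pair.} Take $k=2$, all weightings suitably normalised.
\begin{itemize}
\item Let $w_1=g+s$, where $g\in V_1$ is bounded and carries the $\ell_1$-norm, and $s=Cn\,P_2e_{xy}$ ($C\ge1$) is a projected single-edge spike with $\|s\|_1=O(n)$ but $\|s\|_2^2\approx C^2n^2$.
\item Let $w_2\in V_2$ have bounded entries, and let $w_3=g$.
\end{itemize}
Then $t_1=t_2=2$ is an admissible choice. Yet $\langle w_1^\sigma,w_2\rangle$ equals, up to normalisation, $Cn\,w_2(\{\sigma(x),\sigma(y)\})=O(n)$, so $\disc^+\disc^-(w_1,w_2)=O(n^2)$.

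\textbf{How to organise the reduction.} If two weightings share some $t<k$ with $\|P_tw\|_2^2\ge\binom nk/k$, you are done. Otherwise at least two weightings have all lower components below this threshold. By orthogonality, $\|P_{<k}w\|_1\le\sqrt{\binom nk}\,\|P_{<k}w\|_2<\sqrt{1-1/k}\,\binom nk$, hence $\|P_kw\|_1\ge\binom nk/(2k)$. This $\ell_1$ statement is what the top-component argument can use.

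With these two ingredients, your hybrid route (second moments below the top, transpositions at the top) would go through. Bollob\'as and Scott, and the paper in Theorem~\ref{thm:five}, instead pigeonhole directly on the $\ell_1$ masses $M_r$ and run the transposition method on every component.
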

The directed setting has several additional features. For tournaments, the natural \(n^{3/2}\) scale already appears in the work of Spencer \cite{Spencer1971,Spencer1980} on optimal rankings. Bollob\'as and Scott \cite{BollobasScottHypergraphs2015,BollobasScottRandom2015} asked whether the same scale governs the discrepancy of arbitrary pairs of tournaments and, more generally, what can be said for oriented and directed graphs. Yang and Zeng \cite{YangZengOriented} recently answered the tournament question and proved quantitative lower bounds for oriented graphs of moderate density. Next, we consider the following problem.
\begin{problem}[\textbf{Bollob\'as and Scott} \cite{BollobasScottHypergraphs2015}]
This problem studies how to determine the discrepancy of two directed graphs of the same order.
\end{problem}

Let $D$ and $H$ are two directed graphs of order $n$, with $e(D)=pn(n-1)$ and $e(H)=qn(n-1)$. If we place $D$ and $H$ at random onto the same vertex set, then we expect them to overlap in $pqn(n-1)$ edges. We then study how much or how little we can make them overlap. Let us write $\disc^+(D,H)$ for the largest amount by which we can exceed $pqn(n-1)$, and $\disc^-(D,H)$ for the largest amount less than $pqn(n-1)$. A loopless directed graph is allowed to contain both arcs $(x,y)$ and $(y,x)$. For $\pi\in S_n$ let $D_\pi$ denote the relabelling of $D$ by $\pi$. If $\pi$ is chosen uniformly at random, each arc of $D$ is sent uniformly to one of the $n(n-1)$ ordered pairs of distinct vertices.  We define the positive discrepancy of $D$ with respect to $H$ by
\begin{equation*}\label{eq:discplus-D}
	\disc^+(D,H)=\max_{\pi\in S_n}|E(D_\pi)\cap E(H)|-pqn(n-1),
\end{equation*}
and the negative discrepancy of $G$ with respect to $H$ by
\begin{equation*}\label{eq:discminus-D}
\disc^-(D,H)=pqn(n-1)-\min_{\pi\in S_n}|E(D_\pi)\cap E(H)|,
\end{equation*}
Then
\begin{equation*}
\disc(D,H)=\max\{\disc^+(D,H),\disc^-(D,H)\}.
\end{equation*}
For general directed graphs, however, a uniform two-graph lower bound is impossible. The following elementary example is the basic obstruction.
\begin{remark}
Let $A\subseteq[n]$ with $|A|=r$, and let $D_A$ be the directed graph defined by $E(D_A)=\{(x,y)\in[n]^2:x\in A,y\neq x\}$. Thus every vertex of $A$ has out-degree $n-1$, while every vertex outside $A$ has out-degree 0. If $H$ is d-out-regular, then $\disc(D_A,H)=0$.
\end{remark}
This obstruction motivates a weighted formulation. A weighted directed graph on a vertex set $V$ is a function $w:V^{(2)}\to\R$, where $V^{(2)}$ denotes the ordered pairs of distinct vertices. For weight functions $w,u$ on $[n]$, set
\begin{equation*}
 \ip{w}{u}=\sum_{x\ne y}w_{xy}u_{xy},\qquad\norm{w}_1=\sum_{x\ne y}|w_{xy}|,\qquad w([n])=\sum_{x\ne y}w_{xy}.
\end{equation*}

We work throughout the paper with weighted directed graphs. An \textit{unweighted directed graph} is a subset of \(V^{(2)}\), and can be identified with the weighted directed graph given by the indicator function for its edges. The \textit{density} of \(w\) is defined as $d(w) =w([n])/\left(n(n-1)\right)$. We shall assume that all directed graphs have \(n\) vertices unless otherwise stated (and, in particular, that \(|V| = n\)).  Given two weighted directed graphs \(w, u\) on \(V\), the \textit{intersection} of \(w\) and \(u\) is naturally defined as \(\langle w, u \rangle\), where \(\langle \cdot, \cdot \rangle\) is the standard inner product on \(V^{(2)}\). There is also a natural action of the symmetric group \(S(V)\) on the space of weighted directed graphs, given by \({w_\pi}_{xy} = w_{\pi^{-1}(x)\pi^{-1}(y)}\) (see Section \ref{sec:aux} for notation).

 If $w$ is relabelled uniformly at random, then
\begin{equation}\label{eq:Ewu}
\E_\pi\ip{w_\pi}{u}=d(w)d(u)n(n-1).
\end{equation}
This leads us to define the positive discrepancy of the pair $w,u$ by 
\begin{equation*}
\disc^+(w,u)=\max_\pi\ip{w_\pi}{u}-d(w)d(u)n(n-1),
\end{equation*}
and the negative discrepancy by 
\begin{equation*}
\disc^-(w,u)=d(w)d(u)n(n-1)-\min_\pi\ip{w_\pi}{u},
\end{equation*}
Note that both are nonnegative by (\ref{eq:Ewu}). The discrepancy $\disc(w,u)$ is then defined as
\begin{equation*}
\disc(w,u)=\max\{\disc^+(w,u),\disc^-(w,u)\}.
\end{equation*}
Adding a constant function to either argument does not change discrepancy, so it is natural to work with the centered part $\widetilde w=w-d(w)\cdot\1$. We now state our main results. The first gives the sharp threshold for pairwise zero discrepancy.

\begin{theorem}\label{thm:zero-family}
For every $n\ge4$ there exist four weighted directed graphs $w_1,w_2,w_3,w_4$ with vertex set $[n]$ such that $w_i([n])=0$ and $\norm{w_i}_1=n(n-1)$ for every $i\in[4]$, and such that, for $1\leq i<j\leq 4$, we have
\begin{equation*}
\disc(w_i,w_j)=0.
\end{equation*}
\end{theorem}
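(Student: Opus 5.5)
The plan is to find four mutually orthogonal subspaces $S_1,\dots,S_4$ of $\R^{V^{(2)}}$, each invariant under the action $w\mapsto w_\pi$ and each consisting of functions with $w([n])=0$. Then I take one nonzero $w_i\in S_i$ for each $i$ and rescale it so that $\norm{w_i}_1=n(n-1)$.

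This suffices for the following reason. Since $w_i([n])=0$, we have $d(w_i)=0$, so by \eqref{eq:Ewu} the discrepancy of $(w_i,w_j)$ is just $\max_\pi|\ip{(w_i)_\pi}{w_j}|$. Invariance gives $(w_i)_\pi\in S_i$, and $S_i\perp S_j$, so this inner product is $0$ for every $\pi$. Hence $\disc(w_i,w_j)=0$.

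The guiding heuristic is the decomposition of the permutation module on ordered pairs. It splits into a symmetric part ($w_{xy}=w_{yx}$) and an antisymmetric part ($w_{xy}=-w_{yx}$). These two parts are orthogonal and $S_n$-invariant. Each part further contains a "vertex-type" piece coming from functions on vertices, and an orthogonal complement with zero row sums. Concretely I set
\begin{align*}
S_1&=\{w:\ w_{xy}=w_{yx},\ \textstyle\sum_{y\ne x}w_{xy}=0\ \forall x\},\\
S_2&=\{w:\ w_{xy}=-w_{yx},\ \textstyle\sum_{y\ne x}w_{xy}=0\ \forall x\},\\
S_3&=\{w:\ w_{xy}=a_x+a_y,\ \textstyle\sum_x a_x=0\},\\
S_4&=\{w:\ w_{xy}=a_x-a_y\}.
\end{align*}

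I will verify the required properties in this order.

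\emph{Invariance.} Each defining condition is preserved by relabelling vertices.

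\emph{Zero total weight.} Zero row sums give $w([n])=0$ on $S_1$ and $S_2$. On $S_3$ we get $w([n])=2(n-1)\sum_x a_x=0$. On $S_4$ antisymmetry forces $w([n])=0$.

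\emph{Orthogonality.} Symmetric functions are orthogonal to antisymmetric ones, which handles $S_1,S_3$ versus $S_2,S_4$. For $w\in S_1$ and $u\in S_3$ we have $\ip{w}{u}=\sum_{x\ne y}w_{xy}(a_x+a_y)=2\sum_x a_x\sum_{y\ne x} w_{xy}=0$. The same computation, with signs, gives $S_2\perp S_4$.

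\emph{Nonzero elements for $n\ge4$.} For $S_1$, take $w_{12}=w_{21}=w_{34}=w_{43}=1$ and $w_{13}=w_{31}=w_{24}=w_{42}=-1$, with all other entries $0$. One checks that every row sum vanishes. For $S_2$, take the oriented triangle $w_{12}=w_{23}=w_{31}=1$, $w_{21}=w_{32}=w_{13}=-1$, with all other entries $0$; again every row sum is $0$. For $S_3$ and $S_4$, take $a=e_1-e_2$.

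The only step needing real care is that the weights chosen in $S_1$ and $S_2$ are nonzero and genuinely have zero row sums; the $S_1$ example is where the hypothesis $n\ge4$ is used. Representation theory explains why four is the right number: there are three nontrivial isotypic types, namely $(n-1,1)$ with multiplicity $3$, $(n-2,2)$ and $(n-2,1,1)$. The fourth space comes from using two orthogonal copies of the standard representation, $S_3$ and $S_4$. However, none of this is needed for the proof itself, which only uses invariance and orthogonality.
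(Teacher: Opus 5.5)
Your proof is correct and follows the paper's argument: the same four relabelling-invariant, pairwise orthogonal subspaces ($V_2^+$, $V_2^-$, $V_1^+$, $V_1^-$ in the paper's notation), with one nonzero element taken from each and rescaled to $\ell_1$-norm $n(n-1)$. Your witnesses differ only cosmetically: your $S_2$ element is the paper's triangle $T_{1,2,3}$ rather than $a_xb_y-b_xa_y$, and your $S_1$ element is a relabelling of the paper's $f_3$.

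One slip in your heuristic aside, which does not affect the proof: $(n-1,1)$ occurs in the permutation module on ordered pairs with multiplicity $2$, not $3$. The dimension count $1+2(n-1)+\tfrac{n(n-3)}{2}+\tfrac{(n-1)(n-2)}{2}=n(n-1)$ confirms this, and the two copies are exactly your $S_3,S_4$.
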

\begin{theorem}\label{thm:five}
There exist absolute constants $c,c'>0$ such that the following holds. Let $n\ge4$, and suppose that $w_1,\ldots,w_5$ are weighted directed graphs on $[n]$ such that $w_i([n])=0$ and $\norm{w_i}_1=n(n-1)$ for every $i$. Then there exist distinct $i,j$ such that
\begin{equation*}
\disc^+(w_i,w_j)\disc^-(w_i,w_j)\ge cn^3.
\end{equation*}
In particular, there are $i<j$ such that
\begin{equation*}
\disc(w_i,w_j)\ge c'n^{3/2}.
\end{equation*}
\end{theorem}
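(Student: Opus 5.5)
Following Bollob\'as and Scott, the plan is to decompose the space \(W=\R^{[n]^{(2)}}\) of weighted directed graphs into \(S_n\)-invariant pieces and then argue by pigeonhole. Under the permutation action, \(W\) is the permutation module on ordered pairs. It splits orthogonally as
\[
W=W_0\oplus W_{\mathrm{out}}\oplus W_{\mathrm{in}}\oplus W_{\mathrm{sym}}\oplus W_{\mathrm{anti}},
\]
where the pieces are as follows. \(W_0\) is the constants. \(W_{\mathrm{out}}\) and \(W_{\mathrm{in}}\) are the functions \(a_x\) and \(b_y\) with \(\sum a=\sum b=0\); together they form the part of \(W\) determined by vertex functions, two copies of the standard representation. \(W_{\mathrm{sym}}\) and \(W_{\mathrm{anti}}\) are the symmetric and antisymmetric parts orthogonal to the vertex-function part. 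The vertex-function part is isotypic, so the copies \(W_{\mathrm{out}}\) and \(W_{\mathrm{in}}\) are not canonical. What is canonical is a four-dimensional "isotypic" picture: three nontrivial isotypic components, namely standard (with multiplicity \(2\)), \(S^{(n-2,2)}\) (symmetric, "graph-like"), and \(S^{(n-2,1,1)}\) (antisymmetric, "tournament-like"). In each, \(\E_\pi\ip{w_\pi}{u}\) depends only on the projections.

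The key identity I would establish is the directed analogue of the variance formula
\[
\E_\pi \ip{w_\pi}{u}^2=\sum_{\lambda} \frac{1}{\dim\lambda}\,\mathrm{tr}\bigl(G_\lambda(w)G_\lambda(u)^{T}\bigr),
\]
where \(G_\lambda(w)\) is the Gram-type matrix of the \(\lambda\)-component of \(w\). For the standard component with multiplicity \(2\), this is a \(2\times 2\) positive semidefinite matrix built from the out-degree and in-degree deviation vectors. For \(S^{(n-2,2)}\) and \(S^{(n-2,1,1)}\) it is a scalar, namely the squared norm of the corresponding projection. Thus each centered \(w_i\) gives a positive semidefinite "signature" \(M_i=(A_i,s_i,t_i)\), with \(A_i\) a \(2\times2\) PSD matrix and \(s_i,t_i\ge0\). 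The quantity \(\E\ip{w_{i,\pi}}{w_j}^2\) is then a nonnegative combination of \(\langle A_i,A_j\rangle\), \(s_is_j\) and \(t_it_j\), with weights \(1/(n-1)\), \(1/\dim S^{(n-2,2)}\) and \(1/\dim S^{(n-2,1,1)}\). All of these are of order \(1/n\) or \(1/n^2\). Note that the space of symmetric \(2\times2\) matrices has dimension \(3\), so the signatures live in a cone of total dimension \(3+1+1=5\). Inner products of two PSD \(2\times 2\) matrices can be zero only if they have orthogonal ranges. So at most two rank-one \(A_i\)'s can be pairwise orthogonal, and together with \(s,t\) at most \(2+1+1=4\) signatures can be pairwise orthogonal. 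This is exactly the threshold that Theorem~\ref{thm:zero-family} attains. For five signatures, a quantitative version should give a pair with \(\langle M_i,M_j\rangle\) bounded below in terms of the norms.

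The concrete steps are as follows. First, normalize with \(\norm{w_i}_1=n(n-1)\) and \(w_i([n])=0\), and show that \(\norm{w_i}_2^2\ge n(n-1)\) via \(\norm{w}_2^2\ge\norm{w}_1^2/(n(n-1))\). Second, lower-bound the "weighted signature mass" so that a constant fraction of the weight is visible in the \(1/n\)- or \(1/n^2\)-scaled terms. The trouble is that the standard-component contributions are weighted \(1/n\), while the others are weighted \(\sim 1/n^2\). So we must compare \(\E\ip{w_\pi}{u}^2\) against the correct scale \(n^3\), using the fact that vertex-function norms carry an extra factor of \(n\). Third, apply a pigeonhole lemma for five PSD elements of the five-dimensional cone to find \(i\neq j\) with \(\E_\pi\ip{w_{i,\pi}}{w_j}^2\ge c n^3\). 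Finally, convert the second moment into a product of one-sided deviations. Set \(X=\ip{w_{i,\pi}}{w_j}\), which has mean zero. Then
\[
\E X^2\le \max X\cdot \E|X| \quad\text{and}\quad \E|X|=2\E X^+\le 2\max X,
\]
and similarly for \(-\min X\). This yields \(\disc^+\disc^-\ge \E X^2/2\)-type bounds, giving \(cn^3\). The bound \(\disc\ge c'n^{3/2}\) follows.

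The hardest step is the quantitative pigeonhole combined with the \(\ell_1\)-to-signature lower bound. One needs that some \(w_i\) cannot hide all its mass in components whose pairwise products vanish. This requires \(\norm{w_i}_1\)-normalization to force \(\sum\)(scaled component norms)\(\gtrsim n^2\). I would first try the hypergraph argument of Bollob\'as and Scott: take a normalized signature \(\hat M_i\) in a compact set, observe that five points in the PSD cone of "dimension 4 in rank terms" cannot be pairwise orthogonal, and then extract a uniform constant by compactness.
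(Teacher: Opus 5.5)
\textbf{There is a genuine gap in your third step.} Your final conversion is sound. For mean-zero $X$, with $D^+=\max X$ and $D^-=-\min X$, one has $\E X^2\le D^+\E X^++D^-\E X^-\le 2D^+D^-$, because $\E X^+=\E X^-\le\min\{D^+,D^-\}$. (The inequality $\E X^2\le\max X\cdot\E\abs{X}$ as you wrote it is not true in general, but the conclusion survives.) Your count of at most four pairwise orthogonal signatures also matches the structure of the paper's pigeonhole.

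The problem is that it is false that some pair must satisfy $\E_\pi\ip{w_{i,\pi}}{w_j}^2\ge cn^3$. A second moment over a uniformly random relabelling only sees typical fluctuations. In the second-order components these are of order $n$, while the theorem is about extremal deviations of order $n^{3/2}$.

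Concretely, take $n\equiv1\pmod 4$ and an $\frac{n-1}{2}$-regular graph $G$ on $[n]$. Set $h(x,y)=1$ if $xy\in E(G)$ and $h(x,y)=-1$ otherwise. Then $h$ is symmetric with zero row sums, so $h\in V_2^+$ (your $S^{(n-2,2)}$ component), $h([n])=0$, and $\norm{h}_1=\norm{h}_2^2=n(n-1)$. Your own variance formula gives $\E_\pi\ip{h_\pi}{h}^2=\norm{h}_2^4/\dim S^{(n-2,2)}=2n(n-1)^2/(n-3)=O(n^2)$. The same value arises for every pair among five such functions, whether identical or built from different regular graphs. So your route gives at most $\disc^+\disc^-\gtrsim n^2$, i.e.\ $\disc\gtrsim n$. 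No compactness or pigeonhole refinement can fix this, since only the first-order (vertex-function) channel has second moments of order $n^3$.

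\textbf{What is missing is an amplification from typical to extremal deviations.} The paper gets it from the transposition method. Let $\gamma(w,u)$ be the expected change of $\ip{w_\pi}{u_\sigma}$ under one transposition. The Bollob\'as--Scott argument applies a random set of about $n/2$ disjoint transpositions, whose expected effect is quadratic in the selection probability (Lemma~\ref{lem:tauj}). This gives $\disc^+\disc^-\ge cn^2\gamma^2$ (Lemma~\ref{lem:product-amplification}). Lemma~\ref{lem:local} then gives $\gamma\ge c(nJ_1+\sqrt n\,J_2)$, via the vector-matching Lemma~\ref{lem:vector-matching}. For the $h$ above, $\Phi_2^h(Q)=(C_h(Q),0)^{\mathsf{T}}$, so $J_2(h,h)=(\E_Q\abs{C_h(Q)})^2\ge1$ by the proof of Lemma~\ref{lem:mass}. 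This yields $n^3$ exactly where the second moment stalls at $n^2$.

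The paper also works throughout with $\ell_1$-type quantities, $M_r(w)=\E\norm{\Phi_r^w}_2$ and $J_r(w,u)=\E\abs{\ip{\Phi_r^w}{\Phi_r^u}}$, rather than Gram matrices of $\ell_2$ projections. Lemma~\ref{lem:mass} ties these directly to the normalisation $\norm{w_i}_1=n(n-1)$. The five-versus-four pigeonhole then reads: three of the $w_i$ share a channel $r\in\{1,2\}$ with $M_r\ge c_0/2$. Lemma~\ref{lem:packing}, an $\ell_1$ analogue of your PSD trace count, then gives $J_r\ge c_0^2/16$ for some pair among them.
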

Our proof uses two $\mathbb R^2$-valued random vectors $\Phi_1^w$ and $\Phi_2^w$, introduced in Section \ref{sec:decomp}. For $r=1,2$, let $J_r(w,u)=\E\abs{\ip{\Phi_r^w}{\Phi_r^u}}$, where the two random vectors are sampled independently. We prove the following pairwise estimates.
\begin{theorem}\label{thm:profile-disc}
There is an absolute constant $c>0$ such that the following holds. For every $n\ge4$ and every pair of weight functions $w,u:[n]^{(2)}\to\R$, we have
\begin{equation*}
\disc^+(w,u)\disc^-(w,u)\ge c\bigl(n^4J_1(w,u)^2+n^3J_2(w,u)^2\bigr).
\end{equation*}
 In particular,
\begin{equation*}
\disc(w,u)\ge c\bigl(n^2J_1(w,u)+n^{3/2}J_2(w,u)\bigr).
\end{equation*}
\end{theorem}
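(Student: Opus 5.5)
We need to guess what Φ_1^w, Φ_2^w are. They are likely the following. Decompose the centered \(\widetilde w\) into an order-1 part (vertex profiles, out-degree and in-degree deviations) and an order-2 part. Then \(\Phi_1^w\) is the vector \((d^+_x-\bar d,\ d^-_x-\bar d)\) at a uniformly random vertex \(x\). For the second-order part \(w^{(2)}\), \(\Phi_2^w\) is the vector \((w^{(2)}_{xy},\,w^{(2)}_{yx})\) at a random ordered pair, possibly separating symmetric and antisymmetric components. The proof follows the Bollobás–Scott template for weighted hypergraphs.

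\textbf{Step 1 (decomposition and the variance).} Decompose \(\widetilde w=w^{(1)}+w^{(2)}\) and \(\widetilde u=u^{(1)}+u^{(2)}\) orthogonally into \(S_n\)-invariant subspaces. The order-1 subspace is spanned by functions of the form \(f(x)+g(y)\) with \(\sum f=\sum g=0\). The order-2 subspace is its complement in the centered space, namely functions whose row and column sums vanish. We then compute \(X=\ip{w_\pi}{u}-\E\ip{w_\pi}{u}\) for uniform random \(\pi\) as a sum of a linear-statistic part and a quadratic part. The linear part is \(\sum_x \ip{a^w_{\pi^{-1}x}}{a^u_x}\) up to a factor of order \(n\), where \(a\) denotes the 2-dimensional degree-profile vectors. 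The quadratic part is \(\sum_{x\ne y}\ip{\Phi^w(\pi^{-1}x,\pi^{-1}y)}{\Phi^u(x,y)}\).

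\textbf{Step 2 (reduction to an anticoncentration bound).} Since \(\E X=0\), it suffices to show that \(\E|X|\) is large and that \(X\) does not concentrate on one side. The key elementary inequality is
\[
\disc^+\disc^-\ \ge\ \max X\cdot\max(-X)\ \ge\ \tfrac14(\E|X|)^2,
\]
because \(\E X^+=\E X^-=\tfrac12\E|X|\), \(\max X\ge \E X^+\) and \(\max(-X)\ge\E X^-\). So it suffices to prove
\[
\E|X|\ \ge\ c\bigl(n^2J_1+n^{3/2}J_2\bigr).
\]
The stated "in particular" then follows from \(\disc\ge\sqrt{\disc^+\disc^-}\).

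\textbf{Step 3 (lower bounds for \(\E|X|\)).} We use random transpositions or a random-pairing argument, as in Bollobás–Scott. Partition \([n]\) at random into pairs \(\{x_i,y_i\}\). Condition on everything except independent choices of whether to swap each pair. Conditionally, \(X\) becomes a function of independent signs \(\epsilon_i\), whose linear (Rademacher) part has coefficients of the form
\[
\ip{a^w_{x_i}-a^w_{y_i}}{\,\cdot\,}.
\]
By the Khintchine/Szarek inequality, \(\E|\sum_i\epsilon_ic_i|\ge 2^{-1/2}(\sum c_i^2)^{1/2}\ge c\sum|c_i|/\sqrt{m}\). The quadratic chaos part is controlled by a hypercontractive or Paley–Zygmund step.

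The order-1 contribution gives \(n\cdot n\,\E|\ip{\Phi_1^w}{\Phi_1^u}|=n^2J_1\) after averaging. A cleaner route may be to obtain this from the sign-flip argument applied at the level of single vertices. The order-2 part, from the Rademacher sums over the \(\sim n^2\) pairs, gives \(n^{3/2}J_2\). To keep the two contributions from cancelling, we treat them separately and use \(\E|A+B|\ge \max(\E|A|,\E|B|)\) up to constants. For this we choose the randomness so that one part is conditionally fixed while the other is symmetrized. Orthogonality of the decomposition lets a conditional-expectation (Jensen) argument do this: \(\E|A+B|\ge\E|\E[A+B\mid\mathcal F]|\) for a suitable \(\sigma\)-algebra \(\mathcal F\) that kills \(B\).

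\textbf{Main obstacle.} The main obstacle is the \(L^1\) form of these bounds. The constants \(J_r\) are first absolute moments of inner products, not \(L^2\) norms, so a variance computation alone does not suffice. We must produce, for each realization, a Rademacher sum whose coefficients are exactly the summands \(\ip{\Phi_r^w}{\Phi_r^u}\) evaluated at independent samples. We then need the Szarek bound with \(\ell^1\le\sqrt m\,\ell^2\) reversed appropriately. I would first try the random-pairing-plus-swap construction, as in Bollobás–Scott's hypergraph proof, checking that in the directed case the swap of \(x,y\) produces a coefficient involving both coordinates \((w_{xy},w_{yx})\). That is exactly why \(\Phi\) is \(\R^2\)-valued.
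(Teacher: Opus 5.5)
\textbf{The gap is in Step 2.} The inequality $\disc^+\disc^-\ge\frac14(\E|X|)^2$ is correct. The target $\E|X|\ge c(n^2J_1+n^{3/2}J_2)$ is false in general. A uniformly random relabelling deviates from the mean by a factor of order $\sqrt n$ less than the best relabelling, so your route loses a factor $n$ in the product.

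Here is a counterexample. Let $n$ be even, let $\epsilon\in\{\pm1\}^n$ be balanced, and take $w=u$ with $w(x,y)=\epsilon_x+\epsilon_y$.
\begin{itemize}
\item Then $p_x(w)=\epsilon_x$, $q_x(w)=0$ and $h_w=0$, so $J_1(w,w)=2n(n-2)/(n-1)^2\ge1$.
\item On the other hand $X=2(n-2)\sum_i\epsilon_{\pi^{-1}(i)}\epsilon_i$, so Lemma~\ref{lem:second-moment} gives $\E X^2=4(n-2)^2n^2/(n-1)$, hence $\E|X|<2n^{3/2}\ll n^2J_1$.
\item The theorem itself holds here, since $\disc^\pm(w,w)=2n(n-2)$.
\end{itemize}
The second-order term fails in the same way. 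Take $w=u$ with $w(x,y)=\epsilon_x\epsilon_y+\frac1{n-1}\in V_2^+$. Then $J_2(w,w)\ge1$, but $X=S^2-\frac{n^2}{n-1}$ with $S=\sum_x\epsilon_{\pi^{-1}(x)}\epsilon_x$, so $\E|X|\le 2n^2/(n-1)$.

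The true order of $\E|X|$ is $n^{3/2}J_1+nJ_2$, which is Theorem~\ref{thm:random-overlap}. That is also what your Step 3 actually delivers:
\begin{itemize}
\item A relabelling allows only $\lfloor n/2\rfloor$ disjoint vertex swaps; there are no $\sim n^2$ independent pair flips.
\item The swap effects $\delta_i$ have mean absolute value $\gamma(w,u)$.
\item The Szarek/subset-sum bound returns $\sum_i|\delta_i|/\sqrt m$, i.e.\ $\sqrt n\,\gamma$ (Lemma~\ref{lem:L1-amplification}).
\end{itemize}
Your claims of $n^2J_1$ and $n^{3/2}J_2$ drop exactly this factor $1/\sqrt m$.

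\textbf{The missing idea} is to combine the swaps coherently rather than with random signs. The paper does this through Lemma~\ref{lem:product-amplification}, $\disc^+\disc^-\ge cn^2\gamma(w,u)^2$, imported from Bollob\'as--Scott. Its engine is Lemma~\ref{lem:tauj}: if each transposition in a disjoint family is included independently with probability $p$, the expected change in overlap is the quadratic $p\sum_i\delta_i+p^2\sum_{i<j}\beta_{ij}$. So the pairwise interactions cannot cancel a large linear term for every $p$. This lets the first-order effects be realised at total scale $n\gamma$ instead of $\sqrt n\,\gamma$.

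The second ingredient is Lemma~\ref{lem:local}, $\gamma\ge c(nJ_1+\sqrt n\,J_2)$. It is proved from the exact identity $\gamma=\E\abs{C_0+Z_\rho}$, where $C_0=n\ip{\Phi_1^w(a,b)}{\Phi_1^u(c,d)}+\frac{4n}{n-2}h_w^-(a,b)h_u^-(c,d)$ and $\E_\rho Z_\rho=0$, using Proposition~\ref{prop:translate} and Lemma~\ref{lem:vector-matching}. The two orders are not separated by orthogonality and conditioning alone, as you suggest. The skew second-order part leaks into $C_0$ through $A_w^{ab}A_u^{cd}$ and must be absorbed by $18J_2$.

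Finally, your guessed $\Phi_1,\Phi_2$ are not the paper's. The paper uses profile differences $z_a-z_b$ at ordered pairs, and $(C_{h_w^+}(Q),C_{h_w^-}(Q))$ at quadruples. This is secondary to the $\sqrt n$ loss.
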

\begin{theorem}\label{thm:random-overlap}
There is an absolute constant $c>0$ such that, for every $n\ge4$ and every pair of weighted directed graphs $w,u:[n]^{(2)}\to\R$,
\begin{equation*}
\E_\pi\abs{\ip{w_\pi}{u}}\ge c\left(|d(w)d(u)|n(n-1)+n^{3/2}J_1(w,u)+nJ_2(w,u)\right).
\end{equation*}
\end{theorem}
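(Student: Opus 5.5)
The plan is to reduce to centered weights, then couple a uniform random relabelling with independent random transpositions so that $\ip{w_\pi}{u}$ becomes a degree-two polynomial in independent signs. Hypercontractivity then turns the $L^1$ norm into the coefficient sizes, whose averages should produce $J_1$ and $J_2$. The argument needs the orthogonal decomposition of Section~\ref{sec:decomp}. I expect it to split $\widetilde w$ into a ``vertex/degree'' part, encoded by the $\R^2$-valued profile $\Phi_1^w$ (out- and in-deviations at a random vertex), and a ``pair'' part, encoded by $\Phi_2^w$ (residual weights on a random ordered pair).

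\emph{Step 1 (removing the mean).} Write $w=d(w)\1+\widetilde w$ and $u=d(u)\1+\widetilde u$. Then $\ip{w_\pi}{u}=a+Y_\pi$ with $a=d(w)d(u)n(n-1)$ and $Y_\pi=\ip{\widetilde w_\pi}{\widetilde u}$, where $\E_\pi Y_\pi=0$ by \eqref{eq:Ewu}. We have $\E|a+Y|\ge |a|$ and also $\E|a+Y|\ge \E|Y|-|a|$, so $\E|a+Y|\ge \tfrac12\max\{|a|,\E|Y|\}$. It therefore suffices to prove $\E|Y_\pi|\ge c(n^{3/2}J_1+nJ_2)$. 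We may also assume that $n$ is even, discarding one vertex at a cost absorbed into constants; this case needs separate care.

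\emph{Step 2 (switching decomposition).} Realise $\pi$ as $\sigma\circ\tau_\varepsilon$. Here $\sigma$ is uniform, $M$ is a uniform perfect matching of $[n]$ into $n/2$ pairs, $\varepsilon\in\{\pm1\}^{M}$ is uniform, and $\tau_\varepsilon$ swaps the pairs $e$ with $\varepsilon_e=-1$; this $\pi$ is again uniform. Conditional on $(\sigma,M)$, the quantity $Y$ is a polynomial $f(\varepsilon)=f_0+\sum_e \lambda_e\varepsilon_e+\sum_{e\ne e'}\mu_{ee'}\varepsilon_e\varepsilon_{e'}$ of degree at most $2$, since each arc meets at most two matching pairs. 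By Bonami hypercontractivity, $\E_\varepsilon|f|\ge c(\E_\varepsilon f^2)^{1/2}\ge c\bigl(\sum_e\lambda_e^2+\sum\mu_{ee'}^2\bigr)^{1/2}$.

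\emph{Step 3 (relating coefficients to $J_1,J_2$).} For a pair $e=\{a,b\}$, the coefficient $\lambda_e$ is, up to a factor $\tfrac12$, the change in overlap when $a,b$ are swapped. Its main term is $n\,\ip{\Phi_1^w(a)-\Phi_1^w(b)}{\Phi_1^u(\sigma a)-\Phi_1^u(\sigma b)}$, plus terms built from pair components. By Cauchy--Schwarz, $(\sum_e\lambda_e^2)^{1/2}\ge (n/2)^{-1/2}\sum_e|\lambda_e|$. Averaging over $\sigma$ and $M$ then gives roughly $n^{3/2}\,\E|\ip{X-X'}{Y-Y'}|$, where $X,X'$ are independent copies of $\Phi_1^w$ and $Y,Y'$ of $\Phi_1^u$. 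The identity $\E[X']=0$ together with Jensen (conditioning on $X,Y,Y'$) gives $\E|\ip{X-X'}{Y-Y'}|\ge\E|\ip{X}{Y-Y'}|$, and a second application gives $\ge \E|\ip XY|=J_1$. The same argument applied to $\mu_{ee'}$ is expected to give $n J_2$. There are about $n^2$ pairs $(e,e')$, and for each the main term of $\mu_{ee'}$ is a symmetrised difference of pair residuals, so Cauchy--Schwarz gives a factor $n^{-1}\cdot n^2$ times the averaged difference, which is $\asymp nJ_2$ by the same double-Jensen trick.

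\emph{Main obstacle.} The difficulty is that $\lambda_e$ is not purely the $\Phi_1$ term: it also contains sums of pair-level contributions along the $\approx n$ arcs at $a$ and $b$, and these could cancel the main term. I would handle this by orthogonality of the decomposition. After averaging over the random parts of $\sigma$ not involving $a,b$, the pair-level cross terms have conditional mean zero given the images of $a,b$. Jensen in that conditioning should therefore remove them, leaving only the $\Phi_1$ term in the $L^1$ average. Similarly, the degree-level parts of $\mu_{ee'}$ vanish because $\Phi_2$ is orthogonal to degree profiles. If exact cancellation fails, the fallback is to bound the error terms in $L^1$ by $O(n J_2)$ and take the maximum of the two resulting lower bounds, which costs only a constant factor.
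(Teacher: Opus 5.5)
Your route is genuinely different from the paper's, and most of it works. The paper goes through the single-transposition quantity $\gamma(w,u)$: it uses Lemma~\ref{lem:L1-amplification} to get $\E_\pi\abs{\ip{w_\pi}{u}}\ge c\sqrt n\,\gamma(w,u)$ and Lemma~\ref{lem:local} to get $\gamma(w,u)\ge c(nJ_1+\sqrt n\,J_2)$. You instead expand over random swaps along a matching and apply Bonami directly.

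Your second-order part is in fact cleaner than you expect. Only the eight arcs between $e=\{a,b\}$ and $e'=\{c,d\}$ feel both $\varepsilon_e$ and $\varepsilon_{e'}$. With $Q=(a,b;c,d)$, their second difference is $C_w(Q)C_u(\sigma Q)+C_w(Q^{\mathsf T})C_u((\sigma Q)^{\mathsf T})$, so $\mu_{ee'}=\tfrac12\ip{\Phi_2^w(Q)}{\Phi_2^u(\sigma Q)}$ exactly. Since $Q$ and $\sigma Q$ are independent and uniform, $\E\abs{\mu_{ee'}}=\tfrac12 J_2(w,u)$. The $nJ_2$ bound therefore follows with no error terms and without Lemma~\ref{lem:vector-matching}.

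Likewise, no double Jensen is needed for $J_1$. In the paper $\Phi_1^w$ is indexed by ordered pairs, $\Phi_1^w(a,b)=z_a-z_b$, so the average you reach is already $J_1$. Odd $n$ is handled by a near-perfect matching, so no vertex needs to be discarded.

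The gap is in the first-order coefficients. Here $\lambda_e$ is the swap change averaged over the other signs. This replaces $v_w(z)$ by $\tfrac12(v_w(z)+v_w(M(z)))$ but leaves its mean over $z$ unchanged. Conditioning on $(a,b)$, $(c,d)=(\sigma a,\sigma b)$ and $M$ gives $2\lambda_e=C_0+Z$ with $Z$ of conditional mean zero, as you hope, but
\begin{equation*}
C_0=n\ip{\Phi_1^w(a,b)}{\Phi_1^u(c,d)}+\frac{4n}{n-2}\,h_w^-(a,b)\,h_u^-(c,d).
\end{equation*}
The second term comes from $A_w^{ab}A_u^{cd}$ and from the $\pm\tfrac{2}{n-2}h_w^-(a,b)$ corrections to the averages of $R_w^{ab}(z)$ and $C_w^{ab}(z)$. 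It is a pair-level term that is measurable with respect to the conditioning. So Jensen does not remove it, and nothing prevents it from cancelling the $\Phi_1$ term.

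Your fallback therefore rests on the inequality $\E_{(a,b)}\abs{h_w^-(a,b)}\cdot\E_{(c,d)}\abs{h_u^-(c,d)}\le C\,J_2(w,u)$, which you do not supply. It is not automatic: $J_2$ mixes the $h^+$ and $h^-$ channels, so a priori it could be small through cancellation between $C_{h_w^+}(Q)C_{h_u^+}(Q')$ and $C_{h_w^-}(Q)C_{h_u^-}(Q')$. Two ingredients are missing, both from the proofs of Lemmas~\ref{lem:mass} and~\ref{lem:local}:
\begin{enumerate}[label=(\roman*)]
\item The identity $n\,h_w^-(a,b)=\sum_{c\notin\{a,b\}}T_{h_w^-}(a,b,c)$, together with $2T_{h_w^-}(a,b,c)=-C_{h_w^-}(a,b;c,d)+C_{h_w^-}(a,c;b,d)-C_{h_w^-}(b,c;a,d)$. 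These give $\E_{(a,b)}\abs{h_w^-(a,b)}\le\tfrac32\E_Q\abs{C_{h_w^-}(Q)}$.
\item The map $Q\mapsto Q^{\mathsf T}$ preserves the uniform distribution and flips only the sign of the second coordinate of $\Phi_2^w(Q)$. Hence $J_2(w,u)\ge\E_Q\abs{C_{h_w^-}(Q)}\,\E_{Q'}\abs{C_{h_u^-}(Q')}$.
\end{enumerate}
Together these give $\E\abs{C_0}\ge nJ_1-18J_2$, hence $\E\bigl(\sum_e\lambda_e^2\bigr)^{1/2}\ge c\sqrt n\,(nJ_1-18J_2)$. Your case split against the $\mu$-bound then completes the proof.
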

The paper is organized as follows. Section~\ref{sec:aux} introduces notation and auxiliary probabilistic tools. Section~\ref{sec:decomp} develops the orthogonal decomposition, introduces the random vectors $\Phi_1,\Phi_2$ and the parameters $M_r,J_r$. Section~\ref{sec:transpositions} develops the transposition method and establishes the key estimate relating $\gamma(w,u)$ to the interactions between these random vectors. Section~\ref{sec:quantitative} proves the main theorems. Finally, Section~\ref{sec:zero} characterizes zero-discrepancy pairs and proves the sharp four-function result.
\section{Notation and Auxiliary Tools}\label{sec:aux}
We use the following notation. For a finite set \(V\), let $V^{(2)}=\{(x,y)\in V\times V:x\ne y\}$ denote the set of ordered pairs of distinct vertices. When \(V=[n]\), we have \(|V^{(2)}|=n(n-1)\). A weighted directed graph on \(V\) is a function $w:V^{(2)}\to\mathbb R$, write 
\begin{equation*}
w(V)=\sum_{(x,y)\in V^{(2)}}w_{xy},\qquad \|w\|_1=\sum_{(x,y)\in V^{(2)}}|w_{xy}|.
\end{equation*}
For two weight functions \(w,u:V^{(2)}\to\mathbb R\), define
\begin{equation*}
\langle w,u\rangle=\sum_{(x,y)\in V^{(2)}}w_{xy}u_{xy}.
\end{equation*}
The density of \(w\) is
\begin{equation*}
d(w)=\frac{w(V)}{|V|(|V|-1)}.
\end{equation*}
We write \(\mathbf 1\) for the constant weight function on \(V^{(2)}\)
with value \(1\), and define the centered part of \(w\) by $\widetilde w=w-d(w)\cdot\mathbf 1$. Thus \(\widetilde w(V)=0\).

For a weight function \(w:V^{(2)}\to\mathbb R\), its transpose is
defined by $w^\mathsf{T}(x,y)=w(y,x)$. Thus \(w\) is symmetric if \(w=w^\mathsf{T}\), and skew-symmetric if \(w=-w^\mathsf{T}\). Let \(\operatorname{Sym}(V)\) denote the group of all permutations of
\(V\); when \(V=[n]\), we write \(S_n\). For \(\pi\in\operatorname{Sym}(V)\), the relabelling \(w_\pi\) is defined by ${w_\pi}_{xy}=w_{\pi^{-1}(x)\pi^{-1}(y)}$ for every $(x,y)\in V^{(2)}$. Relabelling preserves the total weight and the \(\ell_1\)-norm: $w_\pi(V)=w(V)$ and $\|w_\pi\|_1=\|w\|_1$. It also commutes with transposition, $(w^\mathsf{T})_\pi=(w_\pi)^\mathsf{T}$. Throughout the paper, \((a,b)\), \((a,b,c)\), and \((a,b,c,d)\) denote ordered tuples of pairwise distinct vertices. Thus,
\begin{equation*}
\mathbb E_{(a,b)} f(a,b)=\frac{1}{n(n-1)}\sum_{(a,b)} f(a,b),\qquad \mathbb E_{(a,b,c)} f(a,b,c)
=\frac{1}{n(n-1)(n-2)}\sum_{(a,b,c)} f(a,b,c),
\end{equation*}
It follows easily from \cite{BollobasScottDiscrepancy2006} that we have the following lemma.
\begin{lemma}[\textbf{Bollob\'as and Scott} \cite{BollobasScottDiscrepancy2006}]\label{lem:subset-sum}
	Let $a_1,\ldots,a_m\in\R$. Form a random subset $I\subseteq[m]$ by including each index independently with probability $1/2$. Then
	\begin{equation*}
		\E\abs{\sum_{i\in I}a_i}\ge\frac{1}{4\sqrt{2m}}\sum_{i=1}^m|a_i|.
	\end{equation*}
\end{lemma}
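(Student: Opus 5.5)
The statement to prove is Lemma~\ref{lem:subset-sum}. The plan is to reduce it to a Khintchine-type bound for Rademacher sums, and then to a one-dimensional anti-concentration estimate. Write $\epsilon_i=2\1[i\in I]-1\in\{\pm1\}$, which are independent uniform signs. Then
\begin{equation*}
\sum_{i\in I}a_i=\tfrac12\sum_i a_i+\tfrac12\sum_i\epsilon_i a_i=\tfrac12 S+\tfrac12 X,
\end{equation*}
where $S=\sum_i a_i$ is deterministic and $X=\sum_i\epsilon_i a_i$ is a Rademacher sum. Since $X$ is symmetric, $X$ and $-X$ have the same law. Hence
\begin{equation*}
\E\abs{S+X}=\tfrac12\E\bigl(\abs{S+X}+\abs{S-X}\bigr)\ge\E\abs{X},
\end{equation*}
using $\abs{S+X}+\abs{S-X}\ge 2\abs{X}$. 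This gives $\E\abs{\sum_{i\in I}a_i}\ge\tfrac12\E\abs{X}$, so the deterministic shift can be discarded at the cost of a factor $1/2$.

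Next I bound $\E\abs{X}$ from below by the $\ell_2$ norm. The cleanest route is Szarek's sharp Khintchine inequality, $\E\abs{X}\ge\frac1{\sqrt2}\bigl(\sum_i a_i^2\bigr)^{1/2}$. Alternatively, a self-contained argument uses Hölder in the form
\begin{equation*}
\E X^2\le(\E\abs{X})^{2/3}(\E X^4)^{1/3}.
\end{equation*}
Together with $\E X^4=3(\sum a_i^2)^2-2\sum a_i^4\le 3(\E X^2)^2$, this yields $\E\abs{X}\ge 3^{-1/2}(\sum a_i^2)^{1/2}$.

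Finally, Cauchy--Schwarz gives $(\sum a_i^2)^{1/2}\ge m^{-1/2}\sum\abs{a_i}$. Combining the three steps,
\begin{equation*}
\E\abs{\textstyle\sum_{i\in I}a_i}\ge\frac{1}{2\sqrt2}\cdot\frac{1}{\sqrt m}\sum_i\abs{a_i},
\end{equation*}
which is stronger than the stated $\frac1{4\sqrt{2m}}$. With the elementary Hölder constant $3^{-1/2}$ instead of Szarek's, we get $\frac{1}{2\sqrt{3m}}\ge\frac{1}{4\sqrt{2m}}$, which still suffices.

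The only step needing care is the lower Khintchine bound; everything else is routine. I would use the fourth-moment/Hölder argument so as not to depend on Szarek's constant, since the constant $1/(4\sqrt2)$ in the statement leaves enough room for it.
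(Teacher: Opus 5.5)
Your proof is correct. The identity \(\sum_{i\in I}a_i=\tfrac12(S+X)\), the symmetrization step \(\mathbb E|S+X|\ge\mathbb E|X|\), the Hölder/fourth-moment bound \(\mathbb E|X|\ge 3^{-1/2}(\sum_i a_i^2)^{1/2}\) and Cauchy--Schwarz all check out. Together they give \(\frac{1}{2\sqrt{3m}}\sum_i|a_i|\), which is stronger than the stated bound. The case \(a=0\), where you cannot divide in the Hölder step, is trivial.

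The paper gives no proof of this lemma; it only says it follows easily from Bollob\'as and Scott. The stated constant is what the more generic route produces. Apply the paper's Proposition~\ref{prop:translate}, which holds for any mean-zero \(X\), to get \(\mathbb E|S+X|\ge\tfrac12\mathbb E|X|\). Then use Szarek's bound \(\mathbb E|X|\ge 2^{-1/2}(\sum_i a_i^2)^{1/2}\), then Cauchy--Schwarz. The total factor is \(\tfrac12\cdot\tfrac12\cdot\tfrac1{\sqrt2}\cdot\tfrac1{\sqrt m}=\tfrac1{4\sqrt{2m}}\). This is presumably the argument behind the citation, though the paper does not say so.

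You depart from that route in two ways. First, you use that a Rademacher sum is symmetric, so \(|S+X|+|S-X|\ge 2|X|\) gives the shift step with no loss rather than a factor \(\tfrac12\). Second, you replace Szarek's sharp constant \(1/\sqrt2\) with the elementary moment constant \(1/\sqrt3\). The gain in the first step more than pays for the loss in the second. Your argument is therefore self-contained and does not depend on Szarek's theorem, while the generic route is shorter only because it takes Szarek as given.
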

The following simple bound was proved in \cite{BollobasScottHypergraphs2015}.
\begin{proposition}[\textbf{Bollob\'as and Scott} \cite{BollobasScottHypergraphs2015}]\label{prop:translate}
	If $X$ is a random variable with $\E X=0$, and $a\in\R$, then $\E|X+a|\ge \max\left\{\E|X|/2,|a|\right\}$.
\end{proposition}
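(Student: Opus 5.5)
The statement to prove is Proposition~\ref{prop:translate}: if \(\E X=0\), then \(\E|X+a|\ge\max\{\E|X|/2,|a|\}\). I will prove the two lower bounds separately and then take the maximum.

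For the bound \(\E|X+a|\ge|a|\), apply Jensen's inequality to the convex function \(t\mapsto|t|\), or just the triangle inequality for expectations:
\begin{equation*}
\E|X+a|\ge\abs{\E(X+a)}=|a|,
\end{equation*}
since \(\E X=0\). This uses nothing beyond integrability of \(X\); if \(\E|X|=\infty\) the statement is trivial.

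For the bound \(\E|X+a|\ge\E|X|/2\), use the triangle inequality pointwise: \(|X|\le|X+a|+|a|\). Taking expectations gives
\begin{equation*}
\E|X|\le\E|X+a|+|a|\le 2\E|X+a|,
\end{equation*}
where the last step uses the first bound \(|a|\le\E|X+a|\). Dividing by \(2\) gives \(\E|X+a|\ge\E|X|/2\).

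Combining the two bounds gives the proposition. There is no real obstacle. The only point to note is that the hypothesis \(\E X=0\) enters only through the first bound, which is then fed into the second.
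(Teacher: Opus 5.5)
Your proof is correct and complete. Jensen gives $\E|X+a|\ge|\E(X+a)|=|a|$, and the pointwise triangle inequality then gives $\E|X|\le\E|X+a|+|a|\le 2\E|X+a|$.

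The paper gives no proof of its own here; it quotes the bound from Bollob\'as and Scott. Your argument is a standard short one.

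An equally short alternative for the second bound avoids going through the first. Since $\E X=0$, we have $\E X^+=\E X^-=\E|X|/2$. So for $a\ge 0$, $\E|X+a|\ge\E(X+a)^+\ge\E X^+=\E|X|/2$, and symmetrically for $a\le 0$.

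One minor point: the case $\E|X|=\infty$ you mention cannot occur, because the hypothesis $\E X=0$ already presupposes that $X$ is integrable.
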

\section{Weight Decomposition and Structural Parameters}\label{sec:decomp}
In this section, we decompose each centered weight function into four pairwise orthogonal components: symmetric and skew-symmetric parts of first and second order. We then encode these components by two \(\mathbb R^2\)-valued random vectors \(\Phi_1^w\) and \(\Phi_2^w\), and introduce the associated quantities \(M_r\) and \(J_r\). The main purpose is to relate these structural quantities to the \(\ell_1\)-norm of \(w\), which will be used in the transposition
argument of Section~\ref{sec:transpositions}.

For each \(x\in[n]\), define the row and column sums of \(w\) by
\begin{equation*}
 r_x(w)=\sum_{y\ne x}w_{xy},\qquad c_x(w)=\sum_{y\ne x}w_{yx}.
\end{equation*}
Set
\begin{equation*}\label{eq:pq}
 p_x(w)=\frac{r_x(w)+c_x(w)}{2(n-2)},\qquad
 q_x(w)=\frac{r_x(w)-c_x(w)}{2n}.
\end{equation*}
For \(x\ne y\), define \(g_w^+(x,y)=p_x(w)+p_y(w)\) and \(g_w^-(x,y)=q_x(w)-q_y(w)\), and let
\(h_w=w-g_w^+-g_w^-\). Finally, write \(h_w^+=(h_w+h_w^\mathsf{T})/2\) and \(h_w^-=(h_w-h_w^\mathsf{T})/2\).

The following lemma gives the basic orthogonal decomposition of a centered
weight function.
\begin{lemma}\label{lem:canonical-decomposition}
	Let \(n\ge 3\) and $w([n])=0$. Then $w=g_w^++g_w^-+h_w^++h_w^-$, where the four summands have the following properties:
	\begin{enumerate}[label=(\alph*)]
		\item \(g_w^+\) and \(h_w^+\) are symmetric, while \(g_w^-\) and \(h_w^-\) are skew-symmetric. Moreover, \(h_w\), \(h_w^+\), and \(h_w^-\) have zero row and column sums.
		
		\item The four functions $g_w^+$, $g_w^-$, $h_w^+$ and $h_w^-$ are pairwise orthogonal.
		
		\item For every \(\pi\in S_n\), $g_{w_\pi}^{\pm}=(g_w^\pm)_\pi$ and $h_{w_\pi}^{\pm}=(h_w^\pm)_\pi$.
	\end{enumerate}
\end{lemma}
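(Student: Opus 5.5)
The plan is to compute everything directly from the row and column sums; the one place where real work is needed is showing that \(h_w\) has zero row and column sums. Throughout write \(r_x=r_x(w)\), \(c_x=c_x(w)\), \(p_x=p_x(w)\), \(q_x=q_x(w)\).

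The preliminary observation is that \(\sum_x r_x=\sum_x c_x=w([n])=0\), so that
\begin{equation*}
P:=\sum_x p_x=0,\qquad Q:=\sum_x q_x=0.
\end{equation*}
From this I would compute the row and column sums of \(g_w^\pm\).

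\emph{Row and column sums of \(g_w^+\).} The row sum at \(x\) is
\begin{equation*}
\sum_{y\ne x}(p_x+p_y)=(n-1)p_x+(P-p_x)=(n-2)p_x=\tfrac12(r_x+c_x).
\end{equation*}
Since \(g_w^+\) is symmetric, its column sum at \(x\) is the same.

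\emph{Row and column sums of \(g_w^-\).} The row sum at \(x\) is
\begin{equation*}
\sum_{y\ne x}(q_x-q_y)=(n-1)q_x-(Q-q_x)=nq_x=\tfrac12(r_x-c_x).
\end{equation*}
Since \(g_w^-\) is skew-symmetric, its column sum at \(x\) is \(-\tfrac12(r_x-c_x)\).

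\emph{Conclusion for \(h_w\).} Adding these, \(g_w^++g_w^-\) has row sum \(r_x\) and column sum \(\tfrac12(r_x+c_x)-\tfrac12(r_x-c_x)=c_x\) at every \(x\). Hence \(h_w=w-g_w^+-g_w^-\) has zero row and column sums. This is exactly why the normalisations \(2(n-2)\) and \(2n\) were chosen. Transposition swaps row and column sums, so \(h_w^\mathsf{T}\) also has zero row and column sums, and therefore so do \(h_w^\pm=(h_w\pm h_w^\mathsf{T})/2\).

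\emph{Part (a).} The identity \(w=g_w^++g_w^-+h_w^++h_w^-\) holds because \(h_w^++h_w^-=h_w\). Symmetry of \(g_w^+\) and \(h_w^+\), and skew-symmetry of \(g_w^-\) and \(h_w^-\), are immediate from the definitions. Together with the previous step this proves (a).

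\emph{Part (b).} Any symmetric \(s\) and skew-symmetric \(t\) are orthogonal: pairing the terms for \((x,y)\) and \((y,x)\) gives \(s_{xy}t_{xy}+s_{yx}t_{yx}=0\). This handles four of the six pairs. For the remaining two, expand the inner product and use part (a):
\begin{equation*}
\ip{g_w^+}{h_w^+}=\sum_x p_x\,r_x(h_w^+)+\sum_y p_y\,c_y(h_w^+)=0,
\end{equation*}
\begin{equation*}
\ip{g_w^-}{h_w^-}=\sum_x q_x\,r_x(h_w^-)-\sum_y q_y\,c_y(h_w^-)=0.
\end{equation*}

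\emph{Part (c).} From \({w_\pi}_{xy}=w_{\pi^{-1}(x)\pi^{-1}(y)}\) we get \(r_x(w_\pi)=r_{\pi^{-1}(x)}(w)\) and \(c_x(w_\pi)=c_{\pi^{-1}(x)}(w)\). Hence \(p_x(w_\pi)=p_{\pi^{-1}(x)}(w)\) and \(q_x(w_\pi)=q_{\pi^{-1}(x)}(w)\), which gives \(g_{w_\pi}^\pm=(g_w^\pm)_\pi\). Relabelling is linear, so \(h_{w_\pi}=(h_w)_\pi\). Since relabelling commutes with transposition, it follows that \(h_{w_\pi}^\pm=(h_w^\pm)_\pi\).
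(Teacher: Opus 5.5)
Your proposal is correct and follows the paper's proof essentially step for step. You use the same margin computations $(n-2)p_x$ and $\pm nq_x$ to show $h_w$ has zero row and column sums, the same symmetric/skew-symmetric orthogonality plus margin expansion for (b), and the same index shift $r_x(w_\pi)=r_{\pi^{-1}(x)}(w)$, $c_x(w_\pi)=c_{\pi^{-1}(x)}(w)$ for (c). The only difference is cosmetic: you obtain the column sums of $g_w^\pm$ from (skew-)symmetry and the margins of $h_w^\pm$ via $h_w^{\mathsf T}$, steps the paper states more briefly.
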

\begin{proof}[\bf Proof]
	By definition, we have
	\begin{equation*}
	g_w^+ + g_w^- + h_w^+ + h_w^-=g_w^+ + g_w^- + h_w=w.
	\end{equation*}
 
 \((a)\) Since \(w([n])=0\), we have
 \begin{equation*}
 	\sum_x r_x(w)=\sum_x c_x(w)=\sum_{(x,y)} w_{xy}=0.
 \end{equation*}
	Hence $\sum_x p_x(w)=\sum_x q_x(w)=0$. For every \(x\in[n]\),
	\begin{equation*}
	\sum_{y\ne x} g_w^+(x,y)=
	\sum_{y\ne x}(p_x(w)+p_y(w))=
	(n-1)p_x(w)+\sum_{y\ne x}p_y(w)=(n-2)p_x(w),
	\end{equation*}
	and similarly
	\begin{equation*}
	\sum_{y\ne x} g_w^-(x,y)=
	\sum_{y\ne x}(q_x(w)-q_y(w))=
	(n-1)q_x(w)-\sum_{y\ne x}q_y(w)=nq_x(w).
	\end{equation*}
Therefore, by the definitions of \(p_x(w)\) and \(q_x(w)\),
\begin{equation}\label{eq:rxw}
	\sum_{y\ne x}\bigl(g_w^+(x,y)+g_w^-(x,y)\bigr)=(n-2)p_x(w)+nq_x(w) =\frac{r_x(w)+c_x(w)}{2}+
\frac{r_x(w)-c_x(w)}{2} =r_x(w).
\end{equation}
Likewise,
\begin{equation*}
\sum_{y\ne x} g_w^+(y,x)=(n-2)p_x(w), \qquad \sum_{y\ne x} g_w^-(y,x)=-nq_x(w),
\end{equation*}
and hence
\begin{equation}\label{eq:cxw}
\sum_{y\ne x}
\bigl(g_w^+(y,x)+g_w^-(y,x)\bigr)=(n-2)p_x(w)-nq_x(w)=
\frac{r_x(w)+c_x(w)}{2}-\frac{r_x(w)-c_x(w)}{2}=c_x(w).
\end{equation}
Since $h_w=w-g_w^+-g_w^-$, (\ref{eq:rxw}) and (\ref{eq:cxw}) imply
\begin{equation*}
\sum_{y\ne x}h_w(x,y)=\sum_{y\ne x}h_w(y,x)=0.
\end{equation*}
Consequently,
\begin{equation*}
\sum_{y\ne x}h_w^+(x,y)=
\frac12\sum_{y\ne x}\bigl(h_w(x,y)+h_w(y,x)\bigr)=0,
\end{equation*}
and
\begin{equation*}
	\sum_{y\ne x}h_w^-(x,y)=\frac12\sum_{y\ne x}\bigl(h_w(x,y)-h_w(y,x)\bigr)=0.
\end{equation*}
By definition, \(g_w^+\) and \(h_w^+\) are symmetric, whereas \(g_w^-\) and \(h_w^-\) are skew-symmetric. The functions $h_w^+$ and $h_w^-$ also have zero column sums.

	\medskip
 $(b)$ Since symmetric functions are orthogonal to skew-symmetric functions, we have $\langle g_w^+,g_w^-\rangle=\langle g_w^+,h_w^-\rangle=\langle h_w^+,g_w^-\rangle=\langle h_w^+,h_w^-\rangle=0$. For the two remaining pairs, part $(a)$ gives
 \begin{equation*}
	\langle g_w^+,h_w^+\rangle=
\sum_{(x,y)}(p_x(w)+p_y(w))h_w^+(x,y)=
\sum_x p_x(w)\sum_{y\ne x}h_w^+(x,y)+
\sum_y p_y(w)\sum_{x\ne y}h_w^+(x,y)=0.
 \end{equation*}
Similarly, we have
\begin{equation*}
	\langle g_w^-,h_w^-\rangle=\sum_{(x,y)}(q_x(w)-q_y(w))h_w^-(x,y)=\sum_x q_x(w)\sum_{y\ne x}h_w^-(x,y)-\sum_y q_y(w)\sum_{x\ne y}h_w^-(x,y)=0.
\end{equation*}
Thus the four summands are pairwise orthogonal.
	
	\medskip
	$(c)$ Let \(\pi\in S_n\). By the definition of relabelling,
	\begin{equation*}
		r_x(w_\pi)=\sum_{y\ne x}w_{\pi^{-1}(x)\pi^{-1}(y)}=r_{\pi^{-1}(x)}(w),
	\end{equation*}
	and
	\begin{equation*}
	c_x(w_\pi)=\sum_{y\ne x}w_{\pi^{-1}(y)\pi^{-1}(x)}=c_{\pi^{-1}(x)}(w).
	\end{equation*}
	It follows that $p_x(w_\pi)=p_{\pi^{-1}(x)}(w)$ and $q_x(w_\pi)=q_{\pi^{-1}(x)}(w)$. Hence, for \(x\ne y\),
	\begin{equation*}
	g_{w_\pi}^+(x, y) = p_x(w_\pi) + p_y(w_\pi)=p_{\pi^{-1}(x)}(w) + p_{\pi^{-1}(y)}(w)=g_w^+(\pi^{-1}(x), \pi^{-1}(y))=(g^+_{w})_{\pi}(x, y)
	\end{equation*} 
	and the same calculation gives \(g_{w_\pi}^-=(g_w^-)_\pi\). Therefore $h_{w_\pi}=w_\pi-g_{w_\pi}^+-g_{w_\pi}^-=(h_w)_\pi$. Since relabelling commutes with transposition, $ h_{w_\pi}^{\pm}=(h_w^\pm)_\pi$. This completes the proof.
\end{proof}
For an ordered pair $a\ne b$, define
\begin{equation*}\label{eq:phi1}
 \Phi_1^w(a,b)=
 \begin{pmatrix}
 \sqrt{\frac{2(n-2)}n}\,\left(p_a(w)-p_b(w)\right)\\[2mm]
 \sqrt2\,\left(q_a(w)-q_b(w)\right)
 \end{pmatrix}\in\R^2.
\end{equation*}
For four distinct vertices $Q=(a,b;c,d)$ define
\begin{equation*}
 C_w(Q)=w_{ac}-w_{bc}-w_{ad}+w_{bd},
\end{equation*}
and let $Q^\mathsf{T}=(c,d;a,b)$. Define
\begin{equation*}\label{eq:phi2}
 \Phi_2^w(Q)=\frac12
 \begin{pmatrix}
 C_w(Q)+C_w(Q^{\mathsf T})\\
 C_w(Q)-C_w(Q^{\mathsf T})
 \end{pmatrix}, \qquad 
 \Phi_2^w(Q^\mathsf{T}) =
 \begin{pmatrix}
 1&0\\
 0&-1
 \end{pmatrix}
 \Phi_2^w(Q).
\end{equation*}
By definitions, we have $C_{g_w^+}(Q)=C_{g_w^-}(Q)=0$. Moreover, since \(h_w^+\) is symmetric and \(h_w^-\) is skew-symmetric, we have $C_{h_w^+}(Q^\mathsf{T})=C_{h_w^+}(Q)$ and $C_{h_w^-}(Q^\mathsf{T})=-C_{h_w^-}(Q)$. Hence
\begin{equation*}
\Phi_2^w(Q)
=\frac{1}{2} \begin{pmatrix} C_{h_w^+}(Q) + C_{h_w^-}(Q) + C_{h_w^+}(Q) - C_{h_w^-}(Q) \\ C_{h_w^+}(Q) + C_{h_w^-}(Q) - C_{h_w^+}(Q) + C_{h_w^-}(Q) \end{pmatrix}=
\begin{pmatrix}
	C_{h_w^+}(Q)\\
	C_{h_w^-}(Q)
\end{pmatrix}.
\end{equation*}
We now define
\begin{equation*}\label{eq:M}
M_r(w)=
\begin{cases}
	|d(w)|,
	& r=0,\\[2mm]
	\displaystyle
	\mathbb E_{(a,b)}
	\bigl\|\Phi_1^w(a,b)\bigr\|_2,
	& r=1,\\[3mm]
	\displaystyle
	\mathbb E_Q
	\bigl\|\Phi_2^w(Q)\bigr\|_2,
	& r=2.
\end{cases}
\end{equation*}
Here \((a,b)\) is chosen uniformly from the ordered pairs of distinct vertices, while \(Q\) is chosen uniformly from the ordered quadruples of distinct vertices. For two weight functions \(w,u\), define
\begin{equation*}\label{eq:J}
J_r(w,u)=
\begin{cases}
	\displaystyle
	\mathbb E_{\substack{(a,b),\; (c,d)}}
	\left|
	\left\langle
	\Phi_1^w(a,b),
	\Phi_1^u(c,d)
	\right\rangle
	\right|,
	& r=1,\\[4mm]
	\displaystyle
	\mathbb E_{\substack{Q,Q'}}
	\left|
	\left\langle
	\Phi_2^w(Q),
	\Phi_2^u(Q')
	\right\rangle
	\right|,
	& r=2.
\end{cases}
\end{equation*}
In each expectation defining \(J_r(w,u)\), the two samples are chosen independently and uniformly.
\begin{lemma}\label{lem:mass}
There are absolute constants $c,C>0$ such that the following holds. For every $n\ge4$, and every weight function $w:[n]^{(2)}\rightarrow \mathbb{R}$,
\begin{equation*}
 c\frac{\norm w_1}{n(n-1)}\le M_0(w)+M_1(w)+M_2(w)\le C\frac{\norm w_1}{n(n-1)}.
\end{equation*}
\end{lemma}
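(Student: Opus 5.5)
Write $N=n(n-1)$ and $\widetilde w=w-d(w)\mathbf 1$. By Lemma~\ref{lem:canonical-decomposition}, $\widetilde w=g^+ +g^- +h^+ +h^-$, where $g^\pm=g^\pm_{\widetilde w}=g^\pm_w$ and $h^\pm=h^\pm_w$. The last identification holds because constants have $C_{\mathbf 1}(Q)=0$ and equal row/column sums, so their contribution to $p_x$ is a constant and cancels in $p_a-p_b$. Hence $M_1(w)=M_1(\widetilde w)$ and $M_2(w)=M_2(\widetilde w)$. We prove the upper and lower bounds separately. In both directions the strategy is to compare each $M_r$ with the $\ell_1$-norm of the corresponding component, up to absolute constants.

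\textbf{Upper bound.} First, $M_0=|w([n])|/N\le \|w\|_1/N$. For $M_1$, use $\|\Phi_1^w(a,b)\|_2\le |p_a-p_b|\sqrt2+\sqrt2|q_a-q_b|$. Then $\E_{(a,b)}|p_a-p_b|\le 2\E_x|p_x|$, and
\[
\sum_x|p_x|\le \frac{\sum_x (|r_x|+|c_x|)}{2(n-2)}\le \frac{\|w\|_1}{n-2}.
\]
Hence $\E_x|p_x|\lesssim \|w\|_1/N$, and the same holds for $q$. For $M_2$, we have $\|\Phi_2^w(Q)\|_2\le |C_w(Q)|+|C_w(Q^{\mathsf T})|$. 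Each of the four terms in $C_w(Q)$ is $w$ evaluated at a uniformly random ordered pair, so $\E_Q|C_w(Q)|\le 4\|w\|_1/N$. This gives $M_0+M_1+M_2\le C\|w\|_1/N$.

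\textbf{Lower bound.} Apply the triangle inequality
\[
\|w\|_1\le |d(w)|N+\|g^+\|_1+\|g^-\|_1+\|h^+\|_1+\|h^-\|_1,
\]
and bound each term by the matching $M_r$. The first is $M_0N$. For $g^+$, since $\sum_x p_x=0$, we get $\E_x|p_x|=\E_x|p_x-\E_y p_y|\le \E_{x,y}|p_x-p_y|$, and the diagonal terms $x=y$ vanish. Hence
\[
\|g^+\|_1\le 2(n-1)\sum_x|p_x|\le 2N\,\E_{(a,b)}|p_a-p_b|\lesssim N M_1,
\]
using $\sqrt{2(n-2)/n}\ge 1$ for $n\ge4$. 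The same argument applied to $q$ handles $g^-$.

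For the $h$-parts, the key identity is that any $h$ with zero row and column sums satisfies $h_{ac}=\E_{b,d}C_h(a,b;c,d)$. Here $b$ ranges over vertices different from $a$ and $c$, and $d$ ranges over vertices different from $a$, $c$ and $b$. Expanding $\E_{b}h_{bc}$, $\E_d h_{ad}$ and $\E_{b,d}h_{bd}$ with the zero-sum conditions gives $h_{ac}$ plus error terms of relative size $O(1/n)$. These error terms are multiples of $h_{ca}$, or of averages already controlled, and this is where $n\ge4$ is needed.

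The cleanest route is exact inversion. Compute
\[
\sum_{b\ne a,c}\;\sum_{d\ne a,b,c}C_h(a,b;c,d)=\alpha h_{ac}+\beta h_{ca}
\]
with explicit $\alpha\approx n^2$ and $|\beta|=O(n)$. Applying this to $h^+$ and $h^-$, where $h_{ca}=\pm h_{ac}$, gives $(\alpha\pm\beta)h^\pm_{ac}$ with $\alpha\pm\beta\ge c n^2$. Averaging absolute values then yields $\|h^\pm\|_1\lesssim N\,\E_Q|C_{h^\pm}(Q)|\le N M_2$, because $\Phi_2^w(Q)=(C_{h^+}(Q),C_{h^-}(Q))$. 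Summing all the pieces gives $\|w\|_1\le C'N(M_0+M_1+M_2)$.

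The main obstacle is this exact inversion. We must evaluate the sum of $C_h$ over $b,d$ distinct from each other and from $a,c$ using only the zero row/column sums, track the correction terms involving $h_{ca}$ and $h_{bd}$ with $b,d\in\{a,c\}$, and verify that the resulting coefficient is bounded away from zero for all $n\ge4$, not just asymptotically.
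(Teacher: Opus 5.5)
Your proposal is correct. The upper bound and the $g^\pm$ estimates are the paper's: its Jensen step for the vectors $z_x\in\R^2$ is your coordinatewise inequality $\sum_x|p_x|\le\frac1n\sum_{(a,b)}|p_a-p_b|$, done for both coordinates at once.

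The real difference is in the second-order part. The paper uses your averaging only for $h^+$, obtaining $\E_{(b,d)}C_{h^+}(a,b;c,d)=\frac{n-1}{n-3}h^+(a,c)$. For $h^-$ it detours through the triangle sums $T_{h^-}(a,b,c)$, using $\sum_c T_{h^-}(a,b,c)=n\,h^-(a,b)$ and $2T_{h^-}(a,b,c)=-C_{h^-}(a,b;c,d)+C_{h^-}(a,c;b,d)-C_{h^-}(b,c;a,d)$. This gives $\|h^-\|_1\le\frac32 n(n-1)M_2(w)$.

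Your single inversion covers both parities, and the step you left open is routine. Using only zero row and column sums,
\[
\sum_{b\notin\{a,c\}}\ \sum_{d\notin\{a,b,c\}}C_h(a,b;c,d)=(n^2-3n+1)\,h_{ac}+h_{ca}.
\]
So $\alpha=n^2-3n+1$ and $\beta=1$, giving $\alpha+\beta=(n-1)(n-2)$ and $\alpha-\beta=n(n-3)$. For $n\ge4$ both are at least the number $(n-2)(n-3)$ of summands. Hence $|h^\pm_{ac}|\le\E_{(b,d)}|C_{h^\pm}(a,b;c,d)|$ and $\|h^\pm\|_1\le n(n-1)M_2(w)$. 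This is shorter than the paper's argument and improves its constant for $h^-$. The bound $\E_{(a,b)}|2h_w^-(a,b)|\le 3\E_Q|C_{h_w^-}(Q)|$ quoted in Lemma~\ref{lem:local} then follows a fortiori; for this lemma the triangle detour buys nothing.

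One slip to fix: $g^+_{\widetilde w}=g^+_w$ and $h^+_{\widetilde w}=h^+_w$ fail when $d(w)\ne0$. Since $p_x(\widetilde w)=p_x(w)-\frac{(n-1)d(w)}{n-2}$, $g^+$ shifts by a constant and $h^+$ shifts by $\frac{n}{n-2}d(w)$. Only $\Phi_1$ and $\Phi_2$, hence $M_1$ and $M_2$, are invariant. Take $g^\pm,h^\pm$ to be those of $\widetilde w$ throughout. You need this anyway for $\sum_x p_x=0$ and for the zero row and column sums, and nothing else in your argument changes.
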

\begin{proof}[\bf Proof]
	Assume first that \(w([n])=0\). Put
	\begin{equation*}
	z_x=\left(\sqrt{\frac{2(n-2)}{n}}\,p_x(w),\sqrt{2}\,q_x(w)\right)\in\mathbb R^2.
	\end{equation*}
	Since $\sum_x p_x(w)=\sum_x q_x(w)=0$, we have $\sum_x z_x=0$. Moreover,
	\(\Phi_1^w(a,b)=z_a-z_b\). Thus, for every fixed \(a\),
	\begin{equation*}
	\frac{1}{n-1}\sum_{b\ne a}(z_a-z_b)=
	z_a-\frac{1}{n-1}\sum_{b\ne a}z_b=
	z_a+\frac{z_a}{n-1}=
	\frac{n}{n-1}z_a .
	\end{equation*}
	Hence, by Jensen's inequality,
	\begin{equation*}
	\frac{n}{n-1}\|z_a\|_2\le\frac{1}{n-1}\sum_{b\ne a}\|z_a-z_b\|_2.
	\end{equation*}
	Summing over \(a\) gives
	\begin{equation*}
		n\sum_a\|z_a\|_2\le
	\sum_{(a,b)}\|z_a-z_b\|_2= \sum_{(a,b)} \|\Phi_1^w(a, b)\|_2=n(n-1) \mathbb{E}_{(a,b)} \|\Phi_1^w(a, b)\|_2=n(n-1) M_1(w),
	\end{equation*}
	and for \(n\ge4\), we have $1\le\sqrt{{2(n-2)}/{n}}<\sqrt2$. Thus $|p_x(w)|+|q_x(w)|\le2\|z_x\|_2$. Therefore
	\begin{equation*}
	\sum_x(|p_x(w)|+|q_x(w)|)\leq2\sum_x\|z_x\|_2\le2(n-1)M_1(w).
	\end{equation*}
	 Since $g_w^+(x,y)+g_w^-(x,y)=p_x(w)+p_y(w)+q_x(w)-q_y(w)$, we obtain
	\begin{align}\label{eq:gw+gw_}
		\|g_w^++g_w^-\|_1&\le\sum_{(x,y)}\bigl(|p_x(w)|+|p_y(w)|+|q_x(w)|+|q_y(w)|
	\bigr)\notag\\&=2(n-1)\sum_x(|p_x(w)|+|q_x(w)|)\notag\\&\le4n(n-1)M_1(w).
	\end{align}
	We next estimate the symmetric second-order part. Fix \(a\ne c\),
	and choose an ordered pair $(b,d)$ uniformly from the ordered pairs of distinct vertices in \([n]\setminus\{a,c\}\). Since \(h_w^+\) is symmetric and has zero row sums,
	\begin{align*}
		&\qquad\mathbb E_{(b,d)}
	C_{h_w^+}(a,b;c,d)=\mathbb E_{(b,d)}\left(h_w^+(a,c)-h_w^+(b,c)-h_w^+(a,d)+h_w^+(b,d)\right)\notag\\&=
	h_w^+(a,c)
	-\frac{1}{n-2}
	\sum_{b\notin\{a,c\}} h_w^+(b,c)
	-\frac{1}{n-2}
	\sum_{d\notin\{a,c\}} h_w^+(a,d)
	+\frac{1}{(n-2)(n-3)}
	\sum_{\substack{b,d\notin\{a,c\}\\ b\ne d}}
	h_w^+(b,d)\notag\\&=
	h_w^+(a,c)
	+\frac{2}{n-2}h_w^+(a,c)
	-\frac{1}{(n-2)(n-3)}
	\left(
	\sum_{b\notin\{a,c\}}h_w^+(b,a)+
	\sum_{b\notin\{a,c\}}h_w^+(b,c)
	\right)\notag\\&=
	h_w^+(a,c)
	+\frac{2}{n-2}h_w^+(a,c)
	+\frac{2}{(n-2)(n-3)}h_w^+(a,c)\notag=\frac{n-1}{n-3}h_w^+(a,c).
	\end{align*}
	Since \((n-1)/(n-3)>1\), Jensen's inequality gives $|h_w^+(a,c)|\le\mathbb E_{(b,d)}
	|C_{h_w^+}(a,b;c,d)|$. Averaging over \(a\ne c\), and using the first coordinate of
	\(\Phi_2^w(Q)\), we have
	 \begin{equation*}
		\mathbb{E}_{(a,c)} |h_w^+(a, c)| = \frac{1}{n(n-1)} \sum_{(a,c)} |h_w^+(a, c)|=\frac{1}{n(n-1)} \|h_w^+\|_1\leq \mathbb{E}_{(a,c)} \mathbb{E}_{(b,d)} |C_{h_w^+}(a, b; c, d)|.
	\end{equation*}
	Then
	\begin{equation}\label{eq:hw+}
	\|h_w^+\|_1\le
	n(n-1)\mathbb E_Q|C_{h_w^+}(Q)|\le
	n(n-1)\mathbb E_Q\|\Phi_2^w(Q)\|_2=n(n-1)M_2(w).
	\end{equation}
	For the skew-symmetric second-order part, define
	\begin{equation*}
	T_{h_w^-}(a,b,c)=h_w^-(a,b)+h_w^-(b,c)+h_w^-(c,a).
	\end{equation*}
Since \(h_w^-\) is skew-symmetric and has zero row sums, we have
	\begin{align*}\label{eq:sumThwabc}
	\sum_{c\notin\{a,b\}}T_{h_w^-}(a,b,c)&=
	\sum_{c\notin\{a,b\}}
	\bigl(
	h_w^-(a,b)+h_w^-(b,c)+h_w^-(c,a)
	\bigr)\notag\\&=
	(n-2)h_w^-(a,b)+
	\sum_{c\notin\{a,b\}}h_w^-(b,c)+
	\sum_{c\notin\{a,b\}}h_w^-(c,a)\notag\\&=
	(n-2)h_w^-(a,b)-h_w^-(b,a)-h_w^-(b,a)\notag\\&=
	(n-2)h_w^-(a,b)+h_w^-(a,b)+h_w^-(a,b)\notag\\&=n h_w^-(a,b).
	\end{align*}
	Therefore,
	\begin{equation*}
	|h_w^-(a,b)|=
	\frac1n\left|
	\sum_{c\notin\{a,b\}}
	T_{h_w^-}(a,b,c)
	\right|\le\frac1n\sum_{c\notin\{a,b\}}|T_{h_w^-}(a,b,c)|.
	\end{equation*}
	Averaging over the ordered pair \(a\ne b\), we obtain
	\begin{equation}\label{eq:EThw-abc}
		\frac{1}{n(n-1)}\|h_w^-\|_1=
	\mathbb E_{(a,b)}|h_w^-(a,b)|\le
	\frac{n-2}{n}
	\mathbb E_{\substack{(a,b,c)}}
	|T_{h_w^-}(a,b,c)|\le
	\mathbb E_{\substack{(a,b,c)}}
	|T_{h_w^-}(a,b,c)|.
	\end{equation}
	For four distinct vertices \(a,b,c,d\), direct expansion gives
	\begin{align*}
2T_{h_w^-}(a,b,c)=-C_{h_w^-}(a,b;c,d)+C_{h_w^-}(a,c;b,d)-C_{h_w^-}(b,c;a,d).
	\end{align*}
	Hence, by the triangle inequality and averaging over \(a,b,c,d\), we have
	\begin{equation}\label{eq:EabcdThw-}
	\mathbb E_{\substack{(a,b,c,d)}}|T_{h_w^-}(a,b,c)|=\mathbb E_{\substack{(a,b,c)}}
	|T_{h_w^-}(a,b,c)|\le
	\frac32\mathbb E_Q|C_{h_w^-}(Q)|\le
	\frac32 M_2(w).
\end{equation}
Combining this with (\ref{eq:EThw-abc}) and (\ref{eq:EabcdThw-}), we obtain
\begin{equation}\label{eq:hw-}
\|h_w^-\|_1\le\frac{3n(n-1)}{2}M_2(w).
\end{equation}
By Lemma~\ref{lem:canonical-decomposition}, we have $w=(g_w^++g_w^-)+h_w^++h_w^-$. Therefore, using (\ref{eq:gw+gw_}), (\ref{eq:hw+}) and (\ref{eq:hw-}), we have
	\begin{align*}
		\|w\|_1&\le
		\|g_w^++g_w^-\|_1
		+\|h_w^+\|_1
		+\|h_w^-\|_1\\&\le
		4n(n-1)M_1(w)
		+n(n-1)M_2(w)
		+\frac32n(n-1)M_2(w)\\&\le
		4 n(n-1)\bigl(M_1(w)+M_2(w)\bigr).
	\end{align*}
	 Then,
	\begin{equation}\label{eq:M1+M2>}
	\frac{\|w\|_1}{4n(n-1)}\le M_1(w)+M_2(w).
	\end{equation}
We now prove the reverse inequality. From the definitions of \(p_x(w)\) and \(q_x(w)\),
\begin{equation*}\label{eq:sumxpxw}
\sum_x|p_x(w)|=\sum_x\left|\frac{r_x(w)+c_x(w)}{2(n-2)}\right|\le\frac{1}{2(n-2)}
\left(\sum_x|r_x(w)|+\sum_x|c_x(w)|\right)\le\frac{\|w\|_1}{n-2},
\end{equation*}
since $\sum_x|r_x(w)|\le\sum_x\sum_{y\ne x}|w(x,y)|=\|w\|_1$, and similarly $\sum_x|c_x(w)|\le\|w\|_1$. Likewise,
\begin{equation*}\label{eq:sumxqxw}
	\sum_x|q_x(w)|=\sum_x\left|
\frac{r_x(w)-c_x(w)}{2n}
\right|\le\frac{1}{2n}\left(\sum_x|r_x(w)|+
\sum_x|c_x(w)|\right)\le\frac{\|w\|_1}{n}.
\end{equation*}
Since \(\|z_x\|_2\le\sqrt2(|p_x(w)|+|q_x(w)|)\), 
\begin{equation}\label{eq:M1<}
M_1(w)=\mathbb E_{(a,b)}\|z_a-z_b\|_2\le\frac2n\sum_x\|z_x\|_2\le
\frac{2\sqrt2}{n}\left(\sum_x|p_x(w)|+\sum_x|q_x(w)|\right)\le \frac{C'\|w\|_1}{n(n-1)}.
\end{equation}
For \(Q=(a,b;c,d)\), the definition of \(\Phi_2^w(Q)\) gives
\begin{align*}\label{eq:1/sqrt2}
	\|\Phi_2^w(Q)\|_2=
\frac12\left(\bigl(C_w(Q)+C_w(Q^\mathsf{T})\bigr)^2+\bigl(C_w(Q)-C_w(Q^\mathsf{T})\bigr)^2\right)^{1/2}\notag\le
\frac1{\sqrt2}\bigl(|C_w(Q)|+|C_w(Q^\mathsf{T})|\bigr).
\end{align*}
Moreover, $|C_w(a,b;c,d)|\le|w(a,c)|+|w(b,c)|+|w(a,d)|+|w(b,d)|$. Under a uniformly random ordered quadruple of distinct vertices, each of the ordered pairs $(a,c)$, $(b,c)$, $(a,d)$ and $(b,d)$ are uniformly distributed over the \(n(n-1)\) ordered pairs of distinct vertices. Hence
\begin{equation*}\label{eq:EQCwQ}
\mathbb E_Q|C_w(Q)|\le\frac{4\|w\|_1}{n(n-1)}.
\end{equation*}
The same estimate holds for \(C_w(Q^T)\), and therefore
\begin{equation*}
M_2(w)=\mathbb E_Q\|\Phi_2^w(Q)\|_2\le\frac1{\sqrt2}
\left(\mathbb E_Q|C_w(Q)|+\mathbb E_Q|C_w(Q^\mathsf{T})|
\right)\le\,\frac{4\sqrt2\|w\|_1}{n(n-1)}.
\end{equation*}
Together with (\ref{eq:M1<}), 
\begin{equation}\label{eq:M1+M2<}
M_1(w)+M_2(w)\le C_0\frac{\|w\|_1}{n(n-1)},
\end{equation}
for centered $w$. Finally, let \(w\) be arbitrary and set $\widetilde w=w-d(w)\mathbf 1$. Then
\begin{equation*}
	n(n-1)|d(w)|=\left|\sum_{x\ne y}w(x,y)\right|\le\sum_{x\ne y}|w(x,y)|=\|w\|_1,
\end{equation*}
and
\begin{equation}\label{eq:w'1<2w1}
	\|\widetilde w\|_1=\sum_{(x,y)}|w(x,y)-d(w)|\le\|w\|_1+n(n-1)|d(w)|\le2\|w\|_1.
\end{equation}
Adding a constant leaves \(M_1\) and \(M_2\) unchanged: indeed,
\(p_x(w+\lambda\mathbf1)\) differs from \(p_x(w)\) by the same
constant for every \(x\), \(q_x(w+\lambda\mathbf1)=q_x(w)\), and
\(C_{w+\lambda\mathbf1}(Q)=C_w(Q)\). Hence $M_1(w)=M_1(\widetilde w)$ and $M_2(w)=M_2(\widetilde w)$. Using (\ref{eq:M1+M2<}) and (\ref{eq:w'1<2w1}),
\begin{equation*}
M_0(w)+M_1(w)+M_2(w)\le\frac{\|w\|_1}{n(n-1)}+C_0\frac{\|\widetilde w\|_1}{n(n-1)}\le
C\frac{\|w\|_1}{n(n-1)}.
\end{equation*}
For the lower bound, since $\|w\|_1\le n(n-1)|d(w)|+\|\widetilde w\|_1$, equation (\ref{eq:M1+M2>}) gives
\begin{equation*}
\frac{\|w\|_1}{n(n-1)}\le M_0(w)+\frac{\|\widetilde w\|_1}{n(n-1)}\le
M_0(w)+4M_1(w)+4M_2(w)\le 4\bigl(M_0(w)+M_1(w)+M_2(w)\bigr).
\end{equation*}
Adjusting the absolute constants completes the proof.
\end{proof}
\section{The Transposition Method}\label{sec:transpositions}
In this section, we study the effect of a transposition on the inner
product \(\langle w_\pi,u_\sigma\rangle\) for every $\pi,\sigma\in S_n$ and derive lower bounds for
\(\gamma(w,u)\) in terms of the structural parameters \(J_1(w,u)\) and
\(J_2(w,u)\).

Fix a transposition $\tau=(xy)$ and define
\begin{equation*}\label{eq:gamma}
 \gamma(w,u)=\E_{\pi,\sigma}\abs{\ip{w_\pi}{u_\sigma}-\ip{w_{\tau\pi}}{u_\sigma}},
\end{equation*}
where $\pi,\sigma$ are chosen independently and uniformly at random. By symmetry, $\gamma(w,u)$ does not depend on the chosen transposition. For distinct $a,b$ and$z\notin\{a,b\}$, define $R_w^{ab}(z)=w_{az}-w_{bz}$, $C_w^{ab}(z)=w_{za}-w_{zb}$ and $A_w^{ab}=w_{ab}-w_{ba}$.
\begin{proposition}\label{prop:single-transposition}
If $\tau=(ab)$, then
\begin{align}
 \ip{w}{u}-\ip{w_{\tau}}{u}
 ={}&\sum_{z\notin\{a,b\}}R_w^{ab}(z)R_u^{ab}(z)+\sum_{z\notin\{a,b\}}C_w^{ab}(z)C_u^{ab}(z)
 +A_w^{ab}A_u^{ab}.\label{eq:single-transposition}
\end{align}
\end{proposition}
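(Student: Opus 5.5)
The identity is a direct computation: since $\tau=(ab)$ moves only $a$ and $b$, the two inner products differ only on ordered pairs that contain $a$ or $b$. I would group those pairs so that each group factors as a product of a difference for $w$ and a difference for $u$.

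\textbf{Step 1 (unchanged pairs).} Since $\tau=\tau^{-1}$, the definition of relabelling gives $(w_\tau)_{xy}=w_{\tau(x)\tau(y)}$. For $(x,y)$ with $\{x,y\}\cap\{a,b\}=\emptyset$ we have $(w_\tau)_{xy}=w_{xy}$. So these terms cancel in $\ip{w}{u}-\ip{w_\tau}{u}$.

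\textbf{Step 2 (split the remaining pairs).} The remaining ordered pairs fall into three disjoint families:
\begin{itemize}[label=--]
\item the row pairs $(a,z),(b,z)$ with $z\notin\{a,b\}$;
\item the column pairs $(z,a),(z,b)$ with $z\notin\{a,b\}$;
\item the two pairs $(a,b),(b,a)$.
\end{itemize}
On these we have $(w_\tau)_{az}=w_{bz}$, $(w_\tau)_{bz}=w_{az}$, $(w_\tau)_{za}=w_{zb}$, $(w_\tau)_{zb}=w_{za}$, $(w_\tau)_{ab}=w_{ba}$ and $(w_\tau)_{ba}=w_{ab}$.

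\textbf{Step 3 (factor each family).} For a fixed $z\notin\{a,b\}$, the row pairs contribute
\begin{equation*}
w_{az}u_{az}+w_{bz}u_{bz}-w_{bz}u_{az}-w_{az}u_{bz}=(w_{az}-w_{bz})(u_{az}-u_{bz})=R_w^{ab}(z)R_u^{ab}(z).
\end{equation*}
The column pairs give $C_w^{ab}(z)C_u^{ab}(z)$ in the same way. The pairs $(a,b),(b,a)$ give
\begin{equation*}
w_{ab}u_{ab}+w_{ba}u_{ba}-w_{ba}u_{ab}-w_{ab}u_{ba}=A_w^{ab}A_u^{ab}.
\end{equation*}
Summing over $z\notin\{a,b\}$ and adding the last term yields \eqref{eq:single-transposition}.

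\textbf{Main obstacle.} There is no real difficulty here. The only point needing care is the convention ${w_\pi}_{xy}=w_{\pi^{-1}(x)\pi^{-1}(y)}$. This convention is harmless in this case because a transposition is its own inverse.
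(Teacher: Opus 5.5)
Your proof is correct and follows the paper's argument essentially step for step. You use $\tau^{-1}=\tau$ to write $(w_\tau)_{xy}=w_{\tau(x)\tau(y)}$, discard the pairs avoiding $\{a,b\}$, and factor the row, column and $\{(a,b),(b,a)\}$ contributions into $R_w^{ab}(z)R_u^{ab}(z)$, $C_w^{ab}(z)C_u^{ab}(z)$ and $A_w^{ab}A_u^{ab}$, exactly as the paper does.
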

\begin{proof}[\bf Proof]
	Since \(\tau=(ab)\) is a transposition, we have \(\tau^{-1}=\tau\). Hence, the relabelled weight function satisfies $(w_\tau)_{xy}=w_{\tau(x)\tau(y)}$. Therefore
	\begin{equation}\label{eq:wu-wu}
		\langle w,u\rangle-\langle w_\tau,u\rangle=\sum_{x\ne y}\bigl(w_{xy}-w_{\tau(x)\tau(y)}\bigr)u_{xy}.
	\end{equation}
For any \(x, y \notin \{a, b\}\), we have \(\tau(x) = x\) and \(\tau(y) = y\), meaning the corresponding terms in (\ref{eq:wu-wu}) are zero. Thus, it suffices to consider only those ordered pairs that contain \(a\) or \(b\). Fix a vertex \(z \notin \{a, b\}\). Under the transposition \((ab)\), the pairs \((a, z)\) and \((b, z)\) are interchanged. Their combined contribution to the sum is
	\begin{equation}\label{eq:az-bz}
	(w_{az}-w_{bz})u_{az}+(w_{bz}-w_{az})u_{bz}=
	(w_{az}-w_{bz})(u_{az}-u_{bz})=R_w^{ab}(z)R_u^{ab}(z).
	\end{equation}
	Similarly, $(z,a)$ and $(z,b)$ are interchanged, contributing
	\begin{equation}\label{eq:za-zb}
	(w_{za}-w_{zb})u_{za}+(w_{zb}-w_{za})u_{zb}=
	(w_{za}-w_{zb})(u_{za}-u_{zb})=C_w^{ab}(z)C_u^{ab}(z).
	\end{equation}
	Finally, the transposition swaps $(a,b)$ and $(b,a)$, yielding a contribution of 
	\begin{equation}\label{eq:ab-ba}
	(w_{ab}-w_{ba})u_{ab}+(w_{ba}-w_{ab})u_{ba}=
	(w_{ab}-w_{ba})(u_{ab}-u_{ba})=A_w^{ab}A_u^{ab}.
	\end{equation}
	Summing the contributions from (\ref{eq:az-bz}) and (\ref{eq:za-zb}) over all \(z\notin\{a,b\}\) and adding (\ref{eq:ab-ba}), we obtain
	\begin{equation*}
	\langle w,u\rangle-\langle w_{\tau},u\rangle=\sum_{z\notin\{a,b\}}R_w^{ab}(z)R_u^{ab}(z)+
	\sum_{z\notin\{a,b\}}C_w^{ab}(z)C_u^{ab}(z)+A_w^{ab}A_u^{ab},
	\end{equation*}
	which completes the proof.
\end{proof}
Consequently, $\gamma(w,u)$ admits the following represectation. Let the fixed transposition be \(\tau=(xy)\). For independent and
uniformly distributed relabellings \(\pi,\sigma\in S_n\), we define $a=\pi^{-1}(x)$, $b=\pi^{-1}(y)$, $c=\sigma^{-1}(x)$ and $d=\sigma^{-1}(y)$. For any vertex \(r\notin\{x,y\}\), we set $z=\pi^{-1}(r)$ and define $\rho(z)=\sigma^{-1}(\pi(z))$. Note that $\rho:[n]\setminus\{a,b\}\longrightarrow[n]\setminus\{c,d\}$ is a bijection. Applying  Proposition~\ref{prop:single-transposition} to the relabelled weights \(w_\pi\) and \(u_\sigma\), we obtain
\begin{equation*}
\langle w_\pi,u_\sigma\rangle-\langle w_{\tau\pi},u_\sigma\rangle=
\sum_{r\notin\{x,y\}}R_{w_\pi}^{xy}(r)R_{u_\sigma}^{xy}(r)+\sum_{r\notin\{x,y\}}
C_{w_\pi}^{xy}(r)C_{u_\sigma}^{xy}(r)+A_{w_\pi}^{xy}A_{u_\sigma}^{xy}.
\end{equation*}
Evaluating the row-difference terms, we have
\begin{equation*}
	R_{w_\pi}^{xy}(r)=(w_\pi)_{xr}-(w_\pi)_{yr}=w_{\pi^{-1}(x)\pi^{-1}(r)}-
w_{\pi^{-1}(y)\pi^{-1}(r)}=w_{az}-w_{bz}=R_w^{ab}(z).
\end{equation*}
Similarly, we have
\begin{equation*}
	R_{u^\sigma}^{xy}(r)=(u_\sigma)_{xr}-(u_\sigma)_{yr}=u_{\sigma^{-1}(x)\sigma^{-1}(r)}
-u_{\sigma^{-1}(y)\sigma^{-1}(r)}=u_{c\rho(z)}-u_{d\rho(z)}=
R_u^{cd}(\rho(z)).
\end{equation*}
Consequently, the product of these terms simplifies to $R_{w_\pi}^{xy}(r)R_{u_\sigma}^{xy}(r)=R_w^{ab}(z)R_u^{cd}(\rho(z))$. An identical argument applies to column-difference terms, giving $C_{w_\pi}^{xy}(r)C_{u_\sigma}^{xy}(r)=C_w^{ab}(z)C_u^{cd}(\rho(z))$. Finally,
\begin{equation*}
	A_{w_\pi}^{xy}=(w_\pi)_{xy}-(w_\pi)_{yx}=w_{\pi^{-1}(x)\pi^{-1}(y)}-
w_{\pi^{-1}(y)\pi^{-1}(x)}=w_{ab}-w_{ba}=A_w^{ab},
\end{equation*}
and similarly $A_{u_\sigma}^{xy}=A_u^{cd}$. Since \(z = \pi^{-1}(r)\), the condition \(r \notin \{x, y\}\) is equivalent to \(z \notin \{a, b\}\). Substituting these terms, we have
\begin{equation}\label{eq:wupi-wu}
\langle w_\pi,u_\sigma\rangle-\langle w_{\tau\pi},u_\sigma\rangle=\sum_{z\notin\{a,b\}}
\Big(R_w^{ab}(z)R_u^{cd}(\rho(z))+C_w^{ab}(z)C_u^{cd}(\rho(z))\Big)+A_w^{ab}A_u^{cd}.
\end{equation}
Because \(\pi\) and \(\sigma\) are chosen independently and uniformly at random, the resulting pairs \((a, b)\) and \((c, d)\) are independent and uniformly distributed ordered pairs of distinct vertices. Conditioned on these two pairs, the composition \(\rho = \sigma^{-1} \circ \pi\) acts as a uniformly random bijection from \([n] \setminus \{a, b\}\) onto \([n] \setminus \{c, d\}\). Therefore, by taking absolute values and expectations on both sides of (\ref{eq:wupi-wu}), we conclude that
\begin{equation}\label{eq:rwu}
	\gamma(w,u)=\mathbb E_{(a,b),\;(c,d),\;\rho}\left|
\sum_{z\notin\{a,b\}}\Big(R_w^{ab}(z)R_u^{cd}(\rho(z))+C_w^{ab}(z)C_u^{cd}(\rho(z))
\Big)+A_w^{ab}A_u^{cd}\right|.
\end{equation}
The following simple bound is proved in \cite{YangZengOriented}.
\begin{lemma}[\textbf{Yang and Zeng} \cite{YangZengOriented}]\label{lem:tauj}
	Let \(w,u:[n]^{(2)}\to\mathbb R\), and let $\{\tau_i:i\in I\}$ be a family of pairwise disjoint transpositions in \(S_n\). For \(J\subseteq I\), write $\tau_J=\prod_{i\in J}\tau_i$, where the order of the product is irrelevant. Choose \(J\subseteq I\)
	randomly by including each \(i\in I\) independently with probability
	\(p\). Then
	\begin{equation*}
\mathbb E_J\left(\langle w,u\rangle-\langle w_{\tau_J},u\rangle\right)=p\sum_{i\in I}\delta_i+p^2\sum_{\substack{i,j\in I\\ i<j}}\beta_{ij},
	\end{equation*}
	where $\delta_i=\langle w,u\rangle-\langle w_{\tau_i},u\rangle$ and $\beta_{ij}=\langle w_{\tau_i},u\rangle+\langle w_{\tau_j},u\rangle-\langle w_{\tau_i\tau_j},u\rangle-\langle w,u\rangle$.
\end{lemma}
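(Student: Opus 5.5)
We prove Lemma~\ref{lem:tauj} by expanding $\langle w_{\tau_J},u\rangle$ using the fact that a product of disjoint transpositions acts ``locally'' on ordered pairs. The idea is to write $\langle w,u\rangle-\langle w_{\tau_J},u\rangle$ as a sum over ordered pairs $(s,t)$. Since the $\tau_i$ are disjoint, $\tau_J$ moves only the vertices lying in the supports of $\tau_i$ with $i\in J$. The term $(w_{xy}-(w_{\tau_J})_{xy})u_{xy}$ therefore depends on $J$ only through $J\cap\{i(x),i(y)\}$, where $i(v)$ denotes the index of the transposition containing $v$ (undefined if $v$ is fixed by all $\tau_i$). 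Each ordered pair $(x,y)$ meets at most two of the transpositions, so the whole sum is a multilinear function of the indicator variables $\xi_i=\1[i\in J]$ of degree at most two.

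The key steps, in order, are as follows.

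First, I would define $F(J)=\langle w,u\rangle-\langle w_{\tau_J},u\rangle$ and show that it has an expansion
\begin{equation*}
F(J)=\sum_{i\in J}\alpha_i+\sum_{\substack{i<j\\ i,j\in J}}\alpha_{ij}.
\end{equation*}
To see this, partition the ordered pairs $(x,y)$ according to the set $S(x,y)=\{i(x),i(y)\}\cap I$. The contribution of $(x,y)$ to $F(J)$ equals $(w_{xy}-w_{\tau_{J\cap S}(x)\tau_{J\cap S}(y)})u_{xy}$. This holds because $(w_{\tau_J})_{xy}=w_{\tau_J(x)\tau_J(y)}$, using $\tau_J^{-1}=\tau_J$, and $\tau_J$ agrees with $\tau_{J\cap S}$ on $\{x,y\}$. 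Hence $F(J)=\sum_{K\subseteq J,\,|K|\le2}c_K$ for suitable coefficients $c_K$ with $c_\emptyset=0$, obtained by M\"obius inversion over subsets of size at most two.

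Second, I would identify the coefficients by evaluating at small $J$. Setting $J=\{i\}$ gives $c_{\{i\}}=F(\{i\})=\delta_i$. Setting $J=\{i,j\}$ gives $c_{\{i,j\}}=F(\{i,j\})-\delta_i-\delta_j$. Expanding,
\begin{equation*}
F(\{i,j\})-\delta_i-\delta_j=\langle w_{\tau_i},u\rangle+\langle w_{\tau_j},u\rangle-\langle w,u\rangle-\langle w_{\tau_i\tau_j},u\rangle=\beta_{ij}.
\end{equation*}
The essential justification is that no higher-order terms $c_K$ with $|K|\ge3$ occur, which follows because each pair's contribution depends on at most two indicators.

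Third, I take expectations. Since the $\xi_i$ are independent Bernoulli$(p)$, we have $\E\xi_i=p$ and $\E\xi_i\xi_j=p^2$ for $i\ne j$. This yields $\E_J F(J)=p\sum_i\delta_i+p^2\sum_{i<j}\beta_{ij}$, as claimed.

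The only delicate point is the first step: verifying carefully that the per-pair contribution depends only on $J\cap S(x,y)$, and that the resulting set function is exactly of degree at most two. I would handle this by writing each contribution as $g_{(x,y)}(J\cap S(x,y))$ and applying inclusion--exclusion on the at most two-element set $S(x,y)$. This shows that $g$ restricted to subsets of $J$ is a polynomial of degree at most $|S|\le2$ in the $\xi$'s. Summing these polynomials over pairs and comparing with the values at $J=\{i\}$ and $J=\{i,j\}$ then fixes the coefficients.
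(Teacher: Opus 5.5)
Your proof is correct and complete. The key observation is that each vertex is moved by at most one $\tau_i$. Hence the contribution of an ordered pair $(x,y)$ depends only on $J\cap S(x,y)$ with $|S(x,y)|\le 2$, and $F(J)$ is a multilinear polynomial of degree at most two in the indicators $\xi_i$. Evaluating at $J=\{i\}$ and $J=\{i,j\}$ then correctly identifies the coefficients as $\delta_i$ and $\beta_{ij}$.

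The paper gives no proof of its own; it simply cites the lemma from Yang and Zeng. Your locality/inclusion--exclusion argument is the natural proof of such statements. It in fact establishes the stronger pointwise identity $F(J)=\sum_{i\in J}\delta_i+\sum_{i<j,\ i,j\in J}\beta_{ij}$ for every $J\subseteq I$. The expectation formula follows from it at once, since the indicators are independent.
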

Lemma~\ref{lem:tauj} shows that, for weighted directed graphs, the expected change under a random collection of pairwise disjoint transpositions is still a quadratic polynomial in the selection probability. This is the key property used in the proofs of Lemmas~11 and~12 of Bollob\'as and Scott \cite{BollobasScottHypergraphs2015}. Hence their arguments extend to the
present weighted directed setting.
\begin{lemma}[\textbf{Bollob\'as and Scott} \cite{BollobasScottHypergraphs2015}]\label{lem:product-amplification}
There is an absolute constant $c>0$ such that the following holds. For every $n\ge4$ and every pair $w,u$ of functions from $[n]^{(2)}\to \mathbb{R}$, we have
\begin{equation*}
 \disc^+(w,u)\disc^-(w,u)\ge cn^2\gamma(w,u)^2.
\end{equation*}
\end{lemma}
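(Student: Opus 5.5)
We follow the scheme of Lemmas~11 and~12 of Bollob\'as and Scott \cite{BollobasScottHypergraphs2015}, with Lemma~\ref{lem:tauj} standing in for the hypergraph polynomial identity. Since both discrepancies are invariant under relabelling and under adding constants, we may replace $w,u$ by $w_\pi,u_\sigma$ and assume $w,u$ are centered. Put $D=\disc^+(w,u)+\disc^-(w,u)$. For every $\pi,\sigma$ the value $\ip{w_\pi}{u_\sigma}$ lies in an interval of length $D$ around the mean. We show separately that $\disc^+\disc^-\gtrsim n^2\gamma^2$ and use $D\ge\disc^\pm$.

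\textbf{Step 1: $\gamma$ bounds a disjoint-transposition sum.} Pick a perfect (or near-perfect) matching of $[n]$, giving $m=\lfloor n/2\rfloor$ disjoint transpositions $\tau_1,\dots,\tau_m$. By symmetry and linearity of expectation,
\begin{equation*}
\E_{\pi,\sigma}\sum_i|\delta_i(\pi,\sigma)|=m\gamma(w,u),
\qquad \delta_i(\pi,\sigma)=\ip{w_\pi}{u_\sigma}-\ip{w_{\tau_i\pi}}{u_\sigma}.
\end{equation*}
Fix $\pi,\sigma$ with $\sum_i|\delta_i|\ge m\gamma$. Let $I^+$ be the set of indices with $\delta_i>0$; by passing to $I^+$ or its complement we lose only a factor $2$. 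We would also like the $\beta_{ij}$ to be small on average. Since the $\tau_i$ are disjoint, $\beta_{ij}$ involves only the $O(1)$ entries between the pairs $\tau_i$ and $\tau_j$. Averaging over $\pi,\sigma$ gives
\begin{equation*}
\E\sum_{i<j}|\beta_{ij}|\lesssim m^2\,\E|w_{e}u_{f}|\text{-type quantities}.
\end{equation*}
These quantities are not directly comparable with $\gamma$, which is why we avoid relying on such a bound.

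\textbf{Step 2: polynomial argument.} Apply Lemma~\ref{lem:tauj} to $\tau_J$, with $J\subseteq I^+$ random of density $p$. We get
\begin{equation*}
f(p)=\E_J\bigl(\ip{w_\pi}{u_\sigma}-\ip{w_{\tau_J\pi}}{u_\sigma}\bigr)=pA+p^2B,\qquad A=\sum_{i\in I^+}\delta_i\ge m\gamma/2 .
\end{equation*}
Every value $\ip{w_{\tau_J\pi}}{u_\sigma}$ is a genuine overlap, so $|f(p)|\le D$ for all $p\in[0,1]$. A quadratic with $f(0)=0$ that is bounded by $D$ on $[0,1]$ has derivative at $0$ of size $O(D)$, by Markov's inequality for polynomials. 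Hence $D\gtrsim A\gtrsim n\gamma$.

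This is only a bound on the sum. To reach the product we refine the argument. Write $x=\ip{w_\pi}{u_\sigma}-\mu$, where $\mu$ is the mean, and note $f(p)\in[x-\disc^+,\,x+\disc^-]$. Taking $p=t$ small gives $tA+t^2B\le x+\disc^-$ and $-tA-t^2B\le \disc^+-x$ for all $t\in[0,1]$. We choose $\pi,\sigma$ so that $x\ge0$ (respectively $x\le0$), and we use both $I^+$ and the set $I^-$ of negative $\delta_i$, which are available for a positive proportion of choices by symmetry. The point is that starting from a point near the top of the range, $J\subseteq I^-$ increases the overlap only within $\disc^+-x$. Optimising $t\approx\min(1,\disc^\mp/A)$ then yields
\begin{equation*}
A^2\lesssim \disc^+\cdot\disc^-\cdot O(1).
\end{equation*}
This is exactly the Bollob\'as--Scott optimisation of a quadratic constrained to a window of size $\disc^+$ on one side and $\disc^-$ on the other. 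With $A\gtrsim n\gamma$ it gives $\disc^+\disc^-\ge cn^2\gamma^2$.

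\textbf{Main obstacle.} The delicate point is controlling the quadratic coefficient $B$. It is the ``$p^2$'' term that could cancel the linear growth, and the window argument must handle both signs of $B$: taking $t$ small makes $t^2B$ negligible only when $|B|\lesssim A/t$. We would first try the Bollob\'as--Scott device of comparing $p$ and $2p$, namely $f(2p)-4f(p)=-2pA$. This eliminates $B$ and gives
\begin{equation*}
2pA\le |f(2p)|+4|f(p)|,
\end{equation*}
with each term confined to the appropriate one-sided window. Combined with the choice of a starting point $(\pi,\sigma)$ at near-extremal overlap, this makes the product bound follow.
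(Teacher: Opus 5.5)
Your sum bound $\disc^++\disc^-\gtrsim n\gamma$ (Markov's inequality for the quadratic from Lemma~\ref{lem:tauj}) is fine. The passage to the product has a genuine gap in two places.

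First, the device $f(2p)-4f(p)=-2pA$ cannot give a product. It yields $2pA=4f(p)-f(2p)\le 4(x+\disc^-)+(\disc^+-x)$. One term uses the upper window and the other the lower window, so for any $p$ and any starting $x$ you only recover $A\lesssim\disc^++\disc^-$.

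Second, you need a starting pair $(\pi,\sigma)$ with $x\ge0$ and $\sum_{i\in I^-}|\delta_i|\gtrsim m\gamma$, and you assert that such pairs exist ``by symmetry''. That does not follow. The involution $\pi\mapsto\tau_i\pi$ gives $\E\delta_i^+=\E\delta_i^-=\gamma/2$, but it changes $x$. So it says nothing about the joint behaviour of the slope $\sum_{i\in I^-}|\delta_i|$ and the position of $x$. Moreover, the near-extremal starting points you propose are exactly where the slope toward the nearby end vanishes: at a maximiser every $\delta_i\ge0$, so $I^-=\emptyset$.

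The missing idea has two parts: bound $B$ one-sidedly using the far window at $p=1$, and then average over all starting pairs instead of selecting one.
\begin{itemize}
\item[(a)] For any $(\pi,\sigma)$, put $s=\disc^+-x$, $L=\disc^-+x$ and $a=\sum_{i\in I^-}|\delta_i|$. Take $J\subseteq I^-$ $p$-random. By Lemma~\ref{lem:tauj} applied to $w_\pi,u_\sigma$, the expected increase is $g(p)=pa-p^2B$, and $-L\le g(p)\le s$ on $[0,1]$.
\item[(b)] The value at $p=1$ gives $B\le a+L$. Hence $pa\le s+p^2(a+L)$, so $a\le 2s/p+2pL$ for $p\le 1/2$.
\item[(c)] Choosing $p=\min\{1/2,\sqrt{s/L}\}$ gives $a\le 8s+4\sqrt{sL}$.
\item[(d)] Since $\E x=0$, we have $\E s=\disc^+$ and $\E L=\disc^-$. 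By Cauchy--Schwarz, $\E a\le 8\disc^++4\sqrt{\disc^+\disc^-}$.
\item[(e)] The same argument with $I^+$ gives $\E a'\le 8\disc^-+4\sqrt{\disc^+\disc^-}$ for $a'=\sum_{i\in I^+}\delta_i$.
\item[(f)] Since $\E\delta_i=0$ and $\E|\delta_i|=\gamma$, we have $\E a=\E a'=m\gamma/2$.
\end{itemize}
Using the smaller of $\disc^\pm$ then gives $m\gamma/2\le 8\min\{\disc^+,\disc^-\}+4\sqrt{\disc^+\disc^-}\le 12\sqrt{\disc^+\disc^-}$. This is the lemma, since $m\ge n/3$. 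The paper itself only cites Bollob\'as--Scott for this step, with Lemma~\ref{lem:tauj} as the required input.
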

\begin{lemma}[\textbf{Bollob\'as and Scott} \cite{BollobasScottHypergraphs2015}]\label{lem:L1-amplification}
There is an absolute constant $c>0$ such that the following holds. For every $n\ge4$ and every pair $w,u$ of functions from $[n]^{(2)}\to \mathbb{R}$, we have
\begin{equation*}
 \E_\pi\abs{\ip{w_\pi}{u}}\ge c\sqrt n\,\gamma(w,u).
\end{equation*}
\end{lemma}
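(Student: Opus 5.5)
The plan is to compare $\E_\pi\abs{\ip{w_\pi}{u}}$ with the effect of a large family of disjoint transpositions. This uses only the invariance of the uniform measure on $S_n$, Lemma~\ref{lem:tauj}, and Lemma~\ref{lem:subset-sum}.

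\emph{Setup.} Fix $m=\lfloor n/2\rfloor$ pairwise disjoint transpositions $\tau_1,\dots,\tau_m$. By symmetry $\gamma(w,u)$ does not depend on the chosen transposition. Since $\ip{w_\pi}{u_\sigma}=\ip{w_{\sigma^{-1}\pi}}{u}$ and $\sigma^{-1}\pi$ is uniform, $\gamma(w,u)=\E_\pi\abs{\ip{w_\pi}{u}-\ip{w_{\tau_i\pi}}{u}}$ for each $i$. For a fixed $\pi$ write $X(J)=\ip{w_{\tau_J\pi}}{u}$, $\delta_i=X(\emptyset)-X(\{i\})$, and let $\beta_{ij}$ be as in Lemma~\ref{lem:tauj} applied to $w_\pi$ and $u$. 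Then $\E_\pi\sum_i\abs{\delta_i}=m\gamma(w,u)$.

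\emph{Step 1 (random sub-family).} Choose $K\subseteq[m]$ by including each index independently with probability $1/2$, independently of $\pi$. By Lemma~\ref{lem:subset-sum}, applied for each fixed $\pi$,
\begin{equation*}
\E_{\pi,K}\abs{\sum_{i\in K}\delta_i}\ge \frac{1}{4\sqrt{2m}}\,\E_\pi\sum_{i=1}^m\abs{\delta_i}=\frac{m\,\gamma(w,u)}{4\sqrt{2m}}\ge c_1\sqrt n\,\gamma(w,u).
\end{equation*}

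\emph{Step 2 (quadratic polynomial trick).} For fixed $\pi$ and $K$, choose $J\subseteq K$ by keeping each index independently with probability $p$. Lemma~\ref{lem:tauj}, applied to $w_\pi,u$ and the family $\{\tau_i:i\in K\}$, gives
\begin{equation*}
g(p):=\E_J\bigl(X(\emptyset)-X(J)\bigr)=p\sum_{i\in K}\delta_i+p^2\sum_{\substack{i<j\\ i,j\in K}}\beta_{ij}.
\end{equation*}
This is a quadratic polynomial with $g(0)=0$, so its linear coefficient is recovered as $g'(0)=4g(1/2)-g(1)$. Hence
\begin{equation*}
\abs{\sum_{i\in K}\delta_i}\le 4\,\E_{J}^{(1/2)}\abs{X(\emptyset)-X(J)}+\abs{X(\emptyset)-X(K)},
\end{equation*}
where $\E_J^{(1/2)}$ denotes the expectation over $J\subseteq K$ chosen with $p=1/2$. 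The $\beta_{ij}$ terms never need to be estimated; this is the point where I expect the main care to be needed, and the exact identity $g'(0)=4g(1/2)-g(1)$ disposes of it.

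\emph{Step 3 (averaging).} Take expectations over $\pi$ and $K$ and use the triangle inequality: $\abs{X(\emptyset)-X(J)}\le\abs{X(\emptyset)}+\abs{X(J)}$. For every fixed $J$ chosen independently of $\pi$, the permutation $\tau_J\pi$ is uniform on $S_n$. Therefore $\E_\pi\abs{X(J)}=\E_\pi\abs{\ip{w_\pi}{u}}$, and the right-hand side is at most $(8+2)\E_\pi\abs{\ip{w_\pi}{u}}$. Combining this with Step~1 yields
\begin{equation*}
\E_\pi\abs{\ip{w_\pi}{u}}\ge \frac{c_1}{10}\sqrt n\,\gamma(w,u),
\end{equation*}
which proves the lemma. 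The only subtle point is independence: $K$ and $J$ must be drawn independently of $\pi$, so that the uniformity of $\tau_J\pi$ can be invoked.
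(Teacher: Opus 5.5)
Your Steps~2 and~3 are sound. The overall scheme is the Bollob\'as--Scott argument that the paper invokes: a random sub-family of $m=\lfloor n/2\rfloor$ disjoint transpositions, Lemma~\ref{lem:subset-sum}, and the exact identity $g'(0)=4g(1/2)-g(1)$ for the quadratic of Lemma~\ref{lem:tauj}. The gap is in your setup: the identity $\gamma(w,u)=\E_\pi\abs{\ip{w_\pi}{u}-\ip{w_{\tau_i\pi}}{u}}$ for a \emph{fixed} transposition $\tau_i$ is false. Relabelling is a left action, so $\ip{w_{\tau\pi}}{u_\sigma}=\ip{w_{\sigma^{-1}\tau\pi}}{u}$ and $\sigma^{-1}\tau\pi=(\sigma^{-1}\tau\sigma)(\sigma^{-1}\pi)$. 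Eliminating $\sigma$ therefore replaces $\tau$ by the uniformly random transposition $\sigma^{-1}\tau\sigma$. What is true is that $\gamma(w,u)$ is the \emph{average} of $f(\tau):=\E_\pi\abs{\ip{w_\pi}{u}-\ip{w_{\tau\pi}}{u}}$ over all transpositions $\tau$, and individual values $f(\tau_i)$ can lie far below this average. For example, let $n$ be even, $\tau_i=(2i-1,\,2i)$, $u_{xy}=1$ if $\lceil x/2\rceil\ne\lceil y/2\rceil$ and $u_{xy}=0$ otherwise, and $w$ the indicator of the arc $(1,2)$. Then $u_{\tau_i}=u$, so $\ip{w_{\tau_i\pi}}{u}=\ip{w_\pi}{u_{\tau_i}}=\ip{w_\pi}{u}$ and every $\delta_i$ vanishes identically. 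Yet $\gamma(w,u)>0$, since for instance $f(\tau)>0$ for $\tau=(1\,3)$. So $\E_\pi\sum_i\abs{\delta_i}=m\gamma(w,u)$ fails, and Step~1 yields nothing for this family.

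The repair is short, and either of two versions works.
\begin{itemize}
\item Keep both relabellings, exactly as in the definition of $\gamma$. Put $X(J)=\ip{w_{\tau_J\pi}}{u_\sigma}$ with $\pi,\sigma$ independent and uniform, and apply Lemma~\ref{lem:tauj} to $w_\pi$ and $u_\sigma$. Then $\E_{\pi,\sigma}\abs{\delta_i}=\gamma(w,u)$ by definition, and $\E_{\pi,\sigma}\abs{X(J)}=\E_\pi\abs{\ip{w_\pi}{u}}$ because $\sigma^{-1}\tau_J\pi$ is uniform for each fixed $J$.
\item Alternatively, note that your Steps~1--3, with $\sum_i f(\tau_i)$ in place of $m\gamma(w,u)$, prove $\E_\pi\abs{\ip{w_\pi}{u}}\ge\frac{1}{40\sqrt{2m}}\sum_{i=1}^m f(\tau_i)$ for \emph{every} family of $m$ disjoint transpositions. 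Apply this to the conjugated family $\{\kappa\tau_i\kappa^{-1}\}$ with $\kappa$ uniform; the left-hand side does not depend on $\kappa$. Averaging, and using $\E_\kappa f(\kappa\tau_i\kappa^{-1})=\gamma(w,u)$, gives the bound.
\end{itemize}
With either fix the rest of your argument goes through unchanged.
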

We next prove a permutation inequality for sums of inner products of two families of vectors.
\begin{lemma}\label{lem:vector-matching}
There exists an absolute constant \(c>0\) such that, for every $m\geq 2$ and every \(x_1,\ldots,x_m,y_1,\ldots,y_m\in\mathbb R^2\), if
\(\rho\) be a uniformly random permutation of \([m]\), then
\begin{equation*}
\mathbb E_\rho\left|\sum_{i=1}^m\langle x_i,y_{\rho(i)}\rangle\right|\ge c\sqrt m\,
\mathbb E_{\substack{(i,j),(k,\ell)}}\left|\langle x_i-x_j,y_k-y_\ell\rangle\right|,
\end{equation*}
where \((i,j)\) and \((k,\ell)\) are chosen independently and
uniformly from the ordered pairs of distinct elements of \([m]\).
\end{lemma}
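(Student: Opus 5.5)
We prove the inequality by applying the transposition method of this section to the random permutation $\rho$ itself, and then using a subset-sum estimate. Write $S(\rho)=\sum_i\langle x_i,y_{\rho(i)}\rangle$. Throughout we use $c_1,c_2,\dots$ for absolute constants.

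\emph{Step 1: the effect of one transposition.} Fix a transposition $(ij)$ acting on the domain. Then
\begin{equation*}
S(\rho)-S(\rho\circ(ij))=\langle x_i-x_j,\,y_{\rho(i)}-y_{\rho(j)}\rangle .
\end{equation*}
This is the one-dimensional analogue of Proposition~\ref{prop:single-transposition}.

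\emph{Step 2: many disjoint transpositions.} Take $\lfloor m/2\rfloor$ disjoint transpositions $\tau_1,\dots,\tau_s$ with $\tau_t=(i_t j_t)$, for instance the pairs $\{1,2\},\{3,4\},\dots$. Since the swaps touch disjoint positions, they contribute additively:
\begin{equation*}
S(\rho)-S(\rho\tau_J)=\sum_{t\in J}\Delta_t,\qquad \Delta_t=\langle x_{i_t}-x_{j_t},\,y_{\rho(i_t)}-y_{\rho(j_t)}\rangle .
\end{equation*}
There are no cross terms $\beta$, because each inner product involves only one domain pair. Choose $J$ uniformly at random. Then $\rho\tau_J$ is again uniformly distributed, so by the triangle inequality
\begin{equation*}
2\,\mathbb E_\rho|S(\rho)|\ge \mathbb E_{\rho,J}|S(\rho)-S(\rho\tau_J)| .
\end{equation*}
Conditioning on $\rho$ and applying Lemma~\ref{lem:subset-sum} gives
\begin{equation*}
\mathbb E_\rho|S(\rho)|\ge \frac{c_1}{\sqrt m}\sum_t\mathbb E_\rho|\Delta_t| .
\end{equation*}

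\emph{Step 3: averaging over the pairing.} Under a random $\rho$, the pair $(\rho(i_t),\rho(j_t))$ is a uniform ordered pair of distinct elements. Hence
\begin{equation*}
\mathbb E_\rho|\Delta_t|=\mathbb E_{(k,\ell)}\bigl|\langle x_{i_t}-x_{j_t},\,y_k-y_\ell\rangle\bigr| .
\end{equation*}
The pairing is arbitrary, so we average over a uniformly random perfect (or near-perfect) matching of $[m]$. Each ordered pair $(i,j)$ then occurs with the right frequency, and
\begin{equation*}
\sum_t\mathbb E|\Delta_t|\ge c_2\, m\,\mathbb E_{(i,j),(k,\ell)}\bigl|\langle x_i-x_j,\,y_k-y_\ell\rangle\bigr| .
\end{equation*}
This uses $\lfloor m/2\rfloor\ge m/3$ for $m\ge2$. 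Since $\mathbb E_\rho|S(\rho)|$ does not depend on the chosen matching, averaging the bound from Step 2 over matchings gives $\mathbb E|S|\ge c_1c_2\sqrt m\,\mathbb E|\langle x_i-x_j,y_k-y_\ell\rangle|$, as required.

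\emph{Main obstacle.} The delicate point is Step 2: we must check that $\rho\tau_J$ is uniform when $J$ is chosen independently of $\rho$, and that the increments really add with no interaction. Both hold because the transpositions act on disjoint domain indices, so $S(\rho\tau_J)$ simply replaces the terms at $i_t,j_t$ for each $t\in J$, independently across $t$. The small cases, such as $m=2$ or $m=3$ where a single transposition is used, are covered by the same constants.
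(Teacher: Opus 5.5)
Your proof is correct and is essentially the paper's argument. The paper writes $\rho=\beta\circ\alpha^{-1}$ with independent uniform $\alpha,\beta\in S_m$, so the swapped domain pairs $(\alpha(2s-1),\alpha(2s))$ form a uniformly random matching from the outset. You instead fix the matching, apply Lemma~\ref{lem:subset-sum} conditionally on $\rho$, and only afterwards average over matchings. These are the same computation, and both close with $\lfloor m/2\rfloor\ge m/3$.
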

\begin{proof}[\bf Proof]
Let \(t = \lfloor m/2 \rfloor\). Choose $\alpha$ and $\beta$ independently and uniformly from  \(S_m\) and define
\begin{equation*}
S = \sum_{r=1}^{m} \langle x_{\alpha(r)}, y_{\beta(r)} \rangle.
\end{equation*}
By setting \(i = \alpha(r)\) and \(\rho = \beta \circ \alpha^{-1}\), we can rewrite the sum as \(S = \sum_{i=1}^{m} \langle x_i, y_{\rho(i)} \rangle\). Since \(\alpha\) and \(\beta\) are independent and uniform, \(\rho\) is a uniformly random permutation of \([m]\). Choose a random subset \(J \subseteq [t]\) by including each \(s \in [t]\) independently with probability \(1/2\). Let \(\alpha_J\) be the permutation obtained by transposing \(\alpha(2s - 1)\) and \(\alpha(2s)\) for all \(s \in J\). Define
\begin{equation*}
S_J = \sum_{r=1}^{m} \langle x_{\alpha_J(r)}, y_{\beta(r)} \rangle.
\end{equation*}
For each fixed \(s \in J\), the change in the sum caused by this swap is
\begin{align*}
	\delta_s&=\langle x_{\alpha(2s-1)}, y_{\beta(2s-1)} \rangle + \langle x_{\alpha(2s)}, y_{\beta(2s)} \rangle - \langle x_{\alpha(2s)}, y_{\beta(2s-1)} \rangle - \langle x_{\alpha(2s-1)}, y_{\beta(2s)} \rangle \\
	&= \langle x_{\alpha(2s-1)} - x_{\alpha(2s)}, y_{\beta(2s-1)} - y_{\beta(2s)} \rangle.
\end{align*}
Since the position pairs are disjoint, the total difference is the sum of these changes:
\begin{align*}
S - S_J &=\sum_{r=1}^{m} \left( \langle x_{\alpha(r)}, y_{\beta(r)} \rangle - \langle x_{\alpha_J(r)}, y_{\beta(r)} \rangle \right)\\&=\sum_{s \in J} \left( \langle x_{\alpha(2s-1)}, y_{\beta(2s-1)} \rangle + \langle x_{\alpha(2s)}, y_{\beta(2s)} \rangle 
 - \langle x_{\alpha_J(2s-1)}, y_{\beta(2s-1)} \rangle - \langle x_{\alpha_J(2s)}, y_{\beta(2s)} \rangle \right)\\&= \sum_{s \in J} \langle x_{\alpha(2s-1)} - x_{\alpha(2s)}, y_{\beta(2s-1)} - y_{\beta(2s)} \rangle\\&=\sum_{s \in J} \delta_s.
\end{align*}
Conditioning on \(\alpha\) and \(\beta\), the terms \(\delta_s\) are fixed. Applying Lemma \ref{lem:subset-sum} gives
\begin{equation*}
\mathbb{E}_J |S - S_J| = \mathbb{E}_J \left| \sum_{s \in J} \delta_s \right| \geq \frac{c_0}{\sqrt{t}} \sum_{s=1}^{t} |\delta_s|.
\end{equation*}
Taking expectations over \(\alpha\) and \(\beta\), the pairs \((\alpha(2s - 1), \alpha(2s))\) and \((\beta(2s - 1), \beta(2s))\) are independent and uniformly distributed over all pairs of distinct indices \((i, j)\) and \((k, \ell)\). Thus,
\begin{equation*}
\mathbb{E}_{\alpha, \beta} |\delta_s| = \mathbb{E}_{\substack{(i,j),(k,\ell)}} |\langle x_i - x_j, y_k - y_\ell \rangle|.
\end{equation*}
Therefore,
\begin{equation}\label{eq:ES-SJ}
\mathbb{E}_{\alpha, \beta, J} |S - S_J| \geq \frac{c_0}{\sqrt{t}} \sum_{s=1}^{t} \mathbb{E}_{\alpha, \beta} |\delta_s| = c_0 \sqrt{t} \mathbb{E}_{\substack{(i,j),(k,\ell)}} |\langle x_i - x_j, y_k - y_\ell \rangle|. 
\end{equation}
By the triangle inequality, we have
\begin{equation}\label{eq:2ES}
\mathbb{E}|S - S_J| \leq \mathbb{E}|S| + \mathbb{E}|S_J|.
\end{equation}
Since \(\alpha\) is uniform, \(\alpha_J\) is also uniformly distributed and independent of \(\beta\). Thus, \(S_J\) and \(S\) are identically distributed, implying \(\mathbb{E}|S_J| = \mathbb{E}|S|\). Combining (\ref{eq:ES-SJ}) and (\ref{eq:2ES}), we obtain
\begin{equation*}
2\mathbb{E}|S| \geq c_0 \sqrt{t} \mathbb{E}_{\substack{(i,j),(k,\ell)}} |\langle x_i - x_j, y_k - y_\ell \rangle|.
\end{equation*}
For \(m \geq 2\), \(t = \lfloor m/2 \rfloor \geq m/3\), which implies \(\sqrt{t} \geq \sqrt{m/3}\). Absorbing the absolute constants into \(c\) completes the proof.
\end{proof}
We now combine the preceding estimates to obtain a lower bound for \(\gamma(w,u)\) in terms of \(J_1(w,u)\) and \(J_2(w,u)\).
\begin{lemma}\label{lem:local}
There is an absolute constant $c>0$ such that, for every $n\ge4$,
\begin{equation*}
 \gamma(w,u)\ge c\bigl(nJ_1(w,u)+\sqrt n\,J_2(w,u)\bigr).
\end{equation*}
\end{lemma}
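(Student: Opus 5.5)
Start from the representation \eqref{eq:rwu} of $\gamma(w,u)$. Condition on the ordered pairs $(a,b)$ and $(c,d)$. Since $\rho$ is a uniform bijection $[n]\setminus\{a,b\}\to[n]\setminus\{c,d\}$, we may identify both sets with $[m]$, $m=n-2\ge2$. Put
\begin{equation*}
x_z=\bigl(R_w^{ab}(z),C_w^{ab}(z)\bigr),\qquad y_{z'}=\bigl(R_u^{cd}(z'),C_u^{cd}(z')\bigr)\in\R^2 .
\end{equation*}
The inner sum is then $\sum_z\langle x_z,y_{\rho(z)}\rangle$, shifted by the constant $A_w^{ab}A_u^{cd}$. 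The mean over $\rho$ of this sum is $\frac1m\langle\sum_z x_z,\sum_{z'}y_{z'}\rangle$. Write $\mu$ for the full conditional mean, including the constant. By Proposition~\ref{prop:translate},
\begin{equation*}
\E_\rho|\cdot|\ge\max\bigl\{\tfrac12\E_\rho|S-\E S|,\ |\mu|\bigr\}.
\end{equation*}
This separates two contributions: the fluctuation part, which should give $\sqrt n J_2$, and the mean part, which should give $nJ_1$. Averaging over $(a,b),(c,d)$ gives $\gamma\ge\frac14\E|S-\E S|+\frac12\E|\mu|$.

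\textbf{Fluctuation part.} Applying Lemma~\ref{lem:vector-matching} to the centred sum gives
\begin{equation*}
\E_\rho|S-\E S|\ge c\sqrt m\,\E_{(e,f),(g,h)}\left|\langle x_e-x_f,\,y_g-y_h\rangle\right|.
\end{equation*}
Subtracting means does not change the differences. Now compute:
\begin{equation*}
x_e-x_f=\bigl(w_{ae}-w_{be}-w_{af}+w_{bf},\ w_{ea}-w_{eb}-w_{fa}+w_{fb}\bigr)=\bigl(C_w(a,b;e,f),\,C_w(Q^{\mathsf T})\bigr),
\end{equation*}
with $Q=(a,b;e,f)$. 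This is an orthogonal rotation/scaling of $\Phi_2^w(Q)$: the map $(s,t)\mapsto\frac12(s+t,s-t)$ is $\frac1{\sqrt2}$ times an orthogonal matrix. Hence
\begin{equation*}
\langle x_e-x_f,y_g-y_h\rangle=2\langle\Phi_2^w(Q),\Phi_2^u(Q')\rangle .
\end{equation*}
Averaging over $(a,b),(c,d)$ then makes $Q$ and $Q'$ independent uniform quadruples. This yields a contribution $\ge c\sqrt n\,J_2(w,u)$.

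\textbf{Mean part.} Here $\sum_{z\ne a,b}x_z=(r_a-r_b-(w_{ab}-w_{ba}),\ c_a-c_b-(w_{ba}-w_{ab}))$. In terms of the decomposition, $r_a\pm c_a$ are $2(n-2)p_a$ and $2nq_a$. So $\mu$ equals $\frac1m\langle(r_a-r_b,c_a-c_b),(r_c-r_d,c_c-c_d)\rangle$ plus error terms involving $A_w^{ab}$, $A_u^{cd}$ and $A_w^{ab}A_u^{cd}$. Rotating to the $(r+c,r-c)$ basis, the main term is
\begin{equation*}
\frac{1}{2m}\bigl(4(n-2)^2\Delta p^w\Delta p^u+4n^2\Delta q^w\Delta q^u\bigr).
\end{equation*}
This is comparable to $n\langle\Phi_1^w(a,b),\Phi_1^u(c,d)\rangle$: the weights $\sqrt{2(n-2)/n}$ and $\sqrt2$ in $\Phi_1$ were chosen so that $2(n-2)^2/m\approx n\cdot\frac{2(n-2)}n$ and $2n^2/m\approx n\cdot2$. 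These match exactly up to factors $1+O(1/n)$. I would try to arrange exact equality by expressing the leftover through $A$.

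\textbf{Main obstacle.} The difficulty is controlling the leftover terms $A_w^{ab}(\cdot)$. Let $\nu$ be the "bad" part of $\mu$, i.e.\ $\mu$ minus the main term. I would bound $\E|\nu|$ by $\frac1{\sqrt n}$ times the fluctuation bound, or absorb it otherwise. Note that $A_w^{ab}$ is governed by $g^-$ and $h^-$. Its $g^-$ part is $2(q_a-q_b)$, which is small relative to the row-difference terms of order $n\,\Delta q$, so it merges into the $J_1$ term with an $O(1/n)$ relative correction. Its $h^-$ part appears in $\mu$ multiplied by $O(1)$ quantities, since $(n-2)/m=1$ gives $A_w^{ab}A_u^{cd}(1-2/m)$-type terms.

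I would try first to write $h_w^-(a,b)$ as an average of $C_{h_w^-}$ over quadruples, as in the $T$-identity from Lemma~\ref{lem:mass}. This would bound these terms by $O(J_2)$-type quantities, which are dominated by $\sqrt n J_2$. If the cross terms resist this, a fallback is the split $\gamma\ge\max(\text{fluct},\text{mean})$. With this split, the error in the mean part need only be at most $C\cdot$fluct$/\sqrt n$-scale quantities, so it can be absorbed by adjusting the constants. Combining gives $\gamma\ge c(nJ_1+\sqrt nJ_2)$.
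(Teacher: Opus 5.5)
Your architecture is the paper's: split $\mathbb E_\rho|\cdot|$ with Proposition~\ref{prop:translate}, get $\gamma\ge c\sqrt n\,J_2$ from Lemma~\ref{lem:vector-matching}, and get $nJ_1$ from the conditional mean. Your fluctuation part, including $\langle x_e-x_f,y_g-y_h\rangle=2\langle\Phi_2^w(Q),\Phi_2^u(Q')\rangle$ and the uniformity of $Q,Q'$, is correct and complete. The gap is in the mean part, which is the crux, and the fallback mechanisms you propose for the leftover terms would fail.

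Write $\Delta q^w=q_a(w)-q_b(w)$ and $\eta^w=h_w^-(a,b)$, and similarly $\Delta q^u,\eta^u$ at $(c,d)$, so $A_w^{ab}=2\Delta q^w+2\eta^w$. Expanding $\mu$ gives three pieces:
\begin{itemize}
\item the $A$-parts of $\frac1m\langle\sum_z x_z,\sum_{z'}y_{z'}\rangle$ contribute $-\frac{8n}{n-2}\Delta q^w\Delta q^u-\frac{4n}{n-2}(\Delta q^w\eta^u+\eta^w\Delta q^u)$;
\item $A_w^{ab}A_u^{cd}$ enters with total factor $1+2/m$ (not $1-2/m$), contributing $\frac{4n}{n-2}(\Delta q^w+\eta^w)(\Delta q^u+\eta^u)$;
\item your main term exceeds $n\langle\Phi_1^w,\Phi_1^u\rangle$ by exactly $\frac{4n}{n-2}\Delta q^w\Delta q^u$.
\end{itemize}
Everything cancels except
\[
\mu=n\bigl\langle\Phi_1^w(a,b),\Phi_1^u(c,d)\bigr\rangle+\frac{4n}{n-2}\,h_w^-(a,b)\,h_u^-(c,d),
\]
which is the paper's identity for $C_0$.

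This exact cancellation is indispensable, not a convenience. If you bound the cross terms instead, they have size $\frac{4n}{n-2}\,\mathbb E|\Delta q^w|\,\mathbb E|h_u^-|$. Take $h_w=0$, so $\Phi_2^w\equiv0$ and $J_2=0$. You can then scale $h_u^-$ up without changing $J_1$, since $h_u^-$ has zero row and column sums. So no bound of the form $C(nJ_1+\sqrt n\,J_2)$ holds for these terms.

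Likewise, an $O(1/n)$ relative perturbation of the $\Delta q\,\Delta q$ coefficient alone is not controlled by $J_1$. The reason is that $J_1$ averages the absolute value of a two-coordinate inner product, which can cancel. So neither ``merging with an $O(1/n)$ relative correction'' nor absorbing at the fluctuation$/\sqrt n$ scale is viable. Only your first option, the exact identity, works.

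Second, the surviving term still needs $\mathbb E|h_w^-|\,\mathbb E|h_u^-|\lesssim J_2$, which you do not supply. The $T$-identity of Lemma~\ref{lem:mass} gives $\mathbb E_{(a,b)}|h_w^-(a,b)|\le\frac32\mathbb E_Q|C_{h_w^-}(Q)|$. But a product of second-coordinate means is not a priori dominated by $J_2=\mathbb E|C_{h_w^+}(Q)C_{h_u^+}(Q')+C_{h_w^-}(Q)C_{h_u^-}(Q')|$. For instance, $M_2(w)M_2(u)$ is not dominated: take $w\in V_2^+$ and $u\in V_2^-$.

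The missing observation is that $Q\mapsto Q^{\mathsf T}$ preserves the uniform law, fixes $C_{h_w^+}$, and flips the sign of $C_{h_w^-}$. Hence $J_2$ also equals the same expectation with a minus sign between the two products. Averaging the two expressions and using $|x+y|+|x-y|\ge2|y|$ gives $J_2\ge\mathbb E|C_{h_w^-}|\,\mathbb E|C_{h_u^-}|$.

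This yields $\mathbb E|\mu|\ge nJ_1-\frac{9n}{n-2}J_2\ge nJ_1-18J_2$. The case split $nJ_1\ge 36J_2$ versus $nJ_1<36J_2$, combined with $\gamma\ge c\sqrt n\,J_2$, then finishes the proof as in the paper.
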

\begin{proof}[\bf Proof]
Adding a constant to the weight functions does not change transposition differences. Moreover, for \(r = 1, 2\), we have \(J_r(w + \lambda \mathbf{1}, u + \mu \mathbf{1}) = J_r(w, u)\), because the differences \(p_a - p_b\), the quantities \(q_a - q_b\), and the \(C_w(Q)\) are unchanged. Hence we may assume \(w([n]) = u([n]) = 0\). For \(z \notin \{a, b\}\), define
\begin{equation*}
 v_w(z)=\bigl(R_w^{ab}(z),C_w^{ab}(z)\bigr)^{\mathsf T},\qquad
\bar v_w=\frac1{n-2}\sum_{z\notin\{a,b\}}v_w(z).
\end{equation*}
Define \(v_u(z)\) and \(\bar{v}_u\) analogously. By (\ref{eq:rwu}), we have 
\begin{align*}
	\gamma(w,u)&=\mathbb E_{(a,b),\;(c,d),\;\rho}\left|
\sum_{z\notin\{a,b\}}\Big(R_w^{ab}(z)R_u^{cd}(\rho(z))+C_w^{ab}(z)C_u^{cd}(\rho(z))
\Big)+A_w^{ab}A_u^{cd}\right|\\&=\mathbb E_{(a,b),\;(c,d),\;\rho}\left|
\sum_{z \notin \{a, b\}} \langle v_w(z), v_u(\rho(z)) \rangle + A_w^{ab} A_u^{cd}\right|\\&=\mathbb E_{(a,b),\;(c,d),\;\rho}\left|(n-2)\langle\bar v_w,\bar v_u\rangle
+\sum_{z\notin\{a,b\}}\left\langle
v_w(z)-\bar v_w,v_u(\rho(z))-\bar v_u\right\rangle+A_w^{ab} A_u^{cd}\right|\\&=\mathbb E_{(a,b),\;(c,d),\;\rho}\left|C_0+Z_\rho\right|,
\end{align*}
where
\begin{equation*}
C_0 = (n - 2) \langle \bar{v}_w, \bar{v}_u \rangle + A_w^{ab} A_u^{cd},\qquad Z_\rho = \sum_{z \notin \{a, b\}} \langle v_w(z) - \bar{v}_w, v_u(\rho(z)) - \bar{v}_u \rangle.
\end{equation*}
For each fixed \(a,b,c,d\), $\mathbb E_\rho Z_\rho=0$. Indeed, for every fixed \(z\),
\begin{equation*}
\mathbb E_\rho\bigl(v_u(\rho(z))-\bar v_u\bigr)=
\frac1{n-2}\sum_{y\notin\{c,d\}}(v_u(y)-\bar v_u)=0.
\end{equation*}
Hence Proposition \ref{prop:translate} implies both \(\mathbb{E}_\rho |C_0 + Z_\rho| \geq |C_0|\) and \(\mathbb{E}_\rho |C_0 + Z_\rho| \geq  \mathbb{E}_\rho |Z_\rho|/2\). Since \(w([n])=0\), we have $r_x(w)=(n-2)p_x(w)+nq_x(w)$, $c_x(w)=(n-2)p_x(w)-nq_x(w)$ and \(A_w^{ab} = 2(q_a(w) - q_b(w)) + 2h_w^-(a, b)\). By definition, we obtain
\begin{equation*}
\sum_{z \notin \{a, b\}} R_w^{ab}(z) = \sum_{z \notin \{a, b\}} (w_{az} - w_{bz}) =(r_a(w) - w_{ab}) - (r_b(w) - w_{ba})= r_a(w) - r_b(w) - A_w^{ab},
\end{equation*}
and
\begin{equation*}
\sum_{z \notin \{a, b\}} C_w^{ab}(z)= \sum_{z \notin \{a, b\}} (w_{za} - w_{zb})= (c_a(w) - w_{ba}) - (c_b(w) - w_{ab}) =  c_a(w) - c_b(w) + A_w^{ab}.
\end{equation*}
Then
\begin{equation*}
\bar{R}_w = (p_a(w) - p_b(w)) + (q_a(w) - q_b(w)) - \frac{2}{n - 2}h_w^-(a, b),
\end{equation*}
and
\begin{equation*}
\bar{C}_w = (p_a(w) - p_b(w)) - (q_a(w) - q_b(w)) + \frac{2}{n - 2}h_w^-(a, b).
\end{equation*}
Substituting these identities gives
\begin{align*}
C_0
&=(n-2)\langle \bar v_w,\bar v_u\rangle
+A_w^{ab}A_u^{cd}\\
&=(n-2)(\bar R_w\bar R_u+\bar C_w\bar C_u)
+A_w^{ab}A_u^{cd}\\
&=2(n-2)\bigl(p_a(w)-p_b(w)\bigr)
\bigl(p_c(u)-p_d(u)\bigr)
+2(n-2)\bigl(q_a(w)-q_b(w)\bigr)
\bigl(q_c(u)-q_d(u)\bigr)\\
&\quad
-4\bigl(q_a(w)-q_b(w)\bigr)h_u^-(c,d)
-4h_w^-(a,b)\bigl(q_c(u)-q_d(u)\bigr)
+\frac{8}{n-2}h_w^-(a,b)h_u^-(c,d)
+A_w^{ab}A_u^{cd}\\
&=2(n-2)\bigl(p_a(w)-p_b(w)\bigr)
\bigl(p_c(u)-p_d(u)\bigr)
+2n\bigl(q_a(w)-q_b(w)\bigr)
\bigl(q_c(u)-q_d(u)\bigr)
\\&\qquad+\frac{4n}{n-2}h_w^-(a,b)h_u^-(c,d)\\
&=n\left\langle
\Phi_1^w(a,b),\Phi_1^u(c,d)
\right\rangle
+\frac{4n}{n-2}h_w^-(a,b)h_u^-(c,d).
\end{align*}
Taking expectations over the uniformly random vertices $a,b,c,d$, we obtain
\begin{equation*}
\gamma(w, u) \geq \mathbb{E}_{(a,b),\;(c,d)}|C_0| \geq nJ_1(w, u) - \frac{4n}{n - 2} \mathbb{E}_{(a,b)}|h_w^-(a, b)| \cdot\mathbb{E}_{(c,d)}|h_u^-(c, d)|.
\end{equation*}
The skew-symmetric part of Lemma~\ref{lem:mass} establishes that \(\mathbb{E}_{(a,b)} |2h_w^-(a, b)| \leq 3\mathbb{E}_Q |C_{h_w^-}(Q)|\).  By the definition of \(\Phi_2\), we have $\Phi_2^w(Q)=\bigl(C_{h_w^+}(Q),C_{h_w^-}(Q)\bigr)^\mathsf{T}$. Since \(Q^\mathsf{T}\) is uniformly distributed whenever \(Q\) is, and $C_{h_w^+}(Q^\mathsf{T})=C_{h_w^+}(Q)$, $C_{h_w^-}(Q^\mathsf{T})=-C_{h_w^-}(Q)$, we have
\begin{align*}
	J_2(w, u) &= \mathbb{E}_{Q, Q'} \left| \langle \Phi_2^w(Q), \Phi_2^u(Q') \rangle \right|= \mathbb{E}_{Q, Q'} \left| C_{h_w^+}(Q) C_{h_u^+}(Q') + C_{h_w^-}(Q) C_{h_u^-}(Q') \right|,
\end{align*}
and
\begin{align*}
	J_2(w, u) = \mathbb{E}_{Q, Q'} \left| \langle \Phi_2^w(Q^\mathsf{T}), \Phi_2^u(Q') \rangle \right| = \mathbb{E}_{Q, Q'} \left| C_{h_w^+}(Q) C_{h_u^+}(Q') - C_{h_w^-}(Q) C_{h_u^-}(Q') \right|.
\end{align*}
Then
\begin{align*}
2J_2(w,u)&= \mathbb{E}_{Q, Q'} \left( \left| C_{h_w^+}(Q) C_{h_u^+}(Q') + C_{h_w^-}(Q) C_{h_u^-}(Q') \right| \right.\left. + \left| C_{h_w^+}(Q) C_{h_u^+}(Q') - C_{h_w^-}(Q) C_{h_u^-}(Q') \right| \right)\\&\ge2\mathbb E_{Q,Q'}\left|C_{h_w^-}(Q)C_{h_u^-}(Q')\right|=2\mathbb E_Q|C_{h_w^-}(Q)|\cdot
\mathbb E_{Q'}|C_{h_u^-}(Q')|.
\end{align*}
By the triangle inequality, we have
\begin{align*}
	\frac{4n}{n-2} \mathbb{E}_{(a,b)} |h_w^-(a, b)| \mathbb{E}_{(c,d)} |h_u^-(c, d)| &\leq \frac{4n}{n-2} \left( \frac{3}{2} \mathbb{E}_Q |C_{h_w^-}(Q)| \right) \cdot\left( \frac{3}{2} \mathbb{E}_{Q'} |C_{h_u^-}(Q')| \right) \\
	&= \frac{9n}{n-2} \mathbb{E}_Q |C_{h_w^-}(Q)|\cdot \mathbb{E}_{Q'} |C_{h_u^-}(Q')| \\
	&\leq \frac{9n}{n-2} J_2(w, u) \\
	&\leq 18 J_2(w, u).
\end{align*}
Then
\begin{equation}\label{eq:r>nj1-18j2}
\gamma(w, u) \geq nJ_1(w, u) - 18J_2(w, u).
\end{equation}
We next obtain a direct lower bound in terms of \(J_2(w,u)\). From Proposition~\ref{prop:translate} and the definition of \(Z_\rho\),
\begin{equation}\label{eq:r=1/2E}
\gamma(w,u)=\mathbb E_{(a,b),\;(c,d)}\mathbb E_\rho |C_0+Z_\rho|\ge\frac12\mathbb E_{(a,b),\;(c,d)}\mathbb E_\rho |Z_\rho|.
\end{equation}
For fixed \(a,b,c,d\), apply Lemma~\ref{lem:vector-matching} to the two centered vector
lists $\{v_w(z)-\bar v_w:z\notin\{a,b\}\}$ and $\{v_u(z)-\bar v_u:z\notin\{c,d\}\}$. Since both lists contain \(n-2\) vectors, we obtain
\begin{equation}\label{eq:cn-2}
\mathbb E_\rho |Z_\rho|\ge c\sqrt{n-2}\,\mathbb E_{\substack{(x,y) ,(r,s)}}
\left|\left\langle v_w(x)-v_w(y),v_u(r)-v_u(s)\right\rangle\right|,
\end{equation}
where \(x\ne y\) are chosen uniformly from \([n]\setminus\{a,b\}\), and \(r\ne s\) are chosen independently and uniformly from \([n]\setminus\{c,d\}\).
By definition, we have
\begin{equation*}
v_w(x)-v_w(y)=
\begin{pmatrix}R_w^{ab}(x)-R_w^{ab}(y)\\C_w^{ab}(x)-C_w^{ab}(y)\end{pmatrix}=
\begin{pmatrix}C_w(a,b;x,y)\\C_w(x,y;a,b)\end{pmatrix}.
\end{equation*}
Similarly, we obtain
\begin{equation*}
v_u(r)-v_u(s)=
\begin{pmatrix}C_u(c,d;r,s)\\C_u(r,s;c,d)\end{pmatrix}.
\end{equation*}
Hence
\begin{equation}\label{eq:vvuu=c}
\left\langle v_w(x)-v_w(y),v_u(r)-v_u(s)\right\rangle=
C_w(a,b;x,y)C_u(c,d;r,s)+C_w(x,y;a,b)C_u(r,s;c,d).
\end{equation}
On the other hand, by the definition of \(\Phi_2\), we have
\begin{equation}\label{eq:2phi=c}
	2\left\langle\Phi_2^w(a,b;x,y),\Phi_2^u(c,d;r,s)\right\rangle=
C_w(a,b;x,y)C_u(c,d;r,s)+C_w(x,y;a,b)C_u(r,s;c,d).
\end{equation}
Combining (\ref{eq:vvuu=c}) and (\ref{eq:2phi=c}), we have
\begin{equation}\label{eq:v-vv-v}
\left|\left\langle v_w(x)-v_w(y),v_u(r)-v_u(s)\right\rangle\right|
=2\left|\left\langle\Phi_2^w(a,b;x,y),\Phi_2^u(c,d;r,s)\right\rangle\right|.
\end{equation}
Substituting (\ref{eq:v-vv-v}) into (\ref{eq:cn-2}) and then into (\ref{eq:r=1/2E}) gives
\begin{equation*}
\gamma(w,u)\ge c\sqrt{n-2}\,\mathbb E_{\substack{(a,b) \\ (c,d)}}\mathbb E_{\substack{(x,y) \\ (r,s)}}\left|
\left\langle\Phi_2^w(a,b;x,y),\Phi_2^u(c,d;r,s)\right\rangle\right|.
\end{equation*}
When \(a,b\) are chosen uniformly with \(a\ne b\), and then
\(x,y\) are chosen uniformly and distinctly from
\([n]\setminus\{a,b\}\), the ordered quadruple
\((a,b;x,y)\) is uniform over all ordered quadruples of distinct
vertices. The same holds independently for \((c,d;r,s)\).
Therefore
\begin{equation*}
\mathbb E_{\substack{(a,b;x,y) \\ (c,d;r,s)}}\left|\left\langle\Phi_2^w(a,b;x,y),\Phi_2^u(c,d;r,s)\right\rangle\right|
=J_2(w,u).
\end{equation*}
Thus
\begin{equation}\label{eq:r>cnj2}
\gamma(w,u)\ge c\sqrt{n-2}\,J_2(w,u)\ge c\sqrt n\,J_2(w,u),
\end{equation}
after adjusting the absolute constant. It remains to combine (\ref{eq:r>nj1-18j2}) and (\ref{eq:r>cnj2}). Suppose first that $nJ_1(w,u)\ge36J_2(w,u)$. Then (\ref{eq:r>nj1-18j2}) gives $\gamma(w,u)\ge nJ_1(w,u)-18J_2(w,u)\ge nJ_1(w,u)/2$. Together with (\ref{eq:r>cnj2}), this implies $\gamma(w,u)\ge c\bigl(nJ_1(w,u)+\sqrt n\,J_2(w,u)\bigr)$. 

Now suppose that $nJ_1(w,u)<36J_2(w,u)$. Since \(n\ge4\), $36J_2(w,u)\le18\sqrt n\,J_2(w,u)$, and hence $nJ_1(w,u)+\sqrt n\,J_2(w,u)\le19\sqrt n\,J_2(w,u)$. The desired bound now follows directly from (\ref{eq:r>cnj2}), after adjusting
the absolute constant. This completes the proof.
\end{proof}

\section{Proofs of Main Theorems}\label{sec:quantitative}
In this section, we combine the transposition estimates from Section~\ref{sec:transpositions} with a auxiliary lemma to prove Theorems~\ref{thm:five}--\ref{thm:random-overlap}.
\begin{proof}[\bf Proof of Theorem~\ref{thm:profile-disc}]
	By Lemmas~\ref{lem:product-amplification} and~\ref{lem:local}, we have
	\begin{equation*}
		\disc^+(w,u)\disc^-(w,u)\ge c n^2\gamma(w,u)^2\ge c\bigl(n^2J_1(w,u)+n^{3/2}J_2(w,u)\bigr)^2.
	\end{equation*}
	Moreover,
	\begin{equation*}
	\disc(w,u)\ge c\bigl(n^2J_1(w,u)+n^{3/2}J_2(w,u)\bigr),
	\end{equation*}
	after adjusting the absolute constant. This completes the proof.
\end{proof}
\begin{proof}[\bf Proof of Theorem \ref{thm:random-overlap}]
Assume first that \(w([n])=u([n])=0\). By Lemmas~\ref{lem:L1-amplification} and~\ref{lem:local},
\begin{equation}\label{eq:Ecnj1nj2}
\mathbb E_\pi\bigl|\langle w_\pi,u\rangle\bigr|\ge c\sqrt n\,\gamma(w,u)\geq c\bigl(
n^{3/2}J_1(w,u)+nJ_2(w,u)\bigr).
\end{equation}
Now let \(w,u\) be arbitrary, and write $\widetilde w=w-d(w)\cdot\mathbf1$ and $\widetilde u=u-d(u)\cdot\mathbf1$. Then \(\widetilde{w}([n])=\widetilde{u}([n])=0\) and
\begin{equation*}
\langle w_\pi,u\rangle=\left\langle d(w)\cdot\mathbf1+\widetilde w_\pi,d(u)\cdot\mathbf1+\widetilde u
\right\rangle=n(n-1)d(w)d(u)+\langle\widetilde w_\pi,\widetilde u\rangle.
\end{equation*}
Since \(\widetilde w\) and \(\widetilde u\) are centered, we have $\mathbb E_\pi\langle\widetilde w^\pi,\widetilde u\rangle=0$. Moreover, adding constants does not change \(\Phi_1\) or \(\Phi_2\), and hence for $r=1,2$, we have $J_r(\widetilde w,\widetilde u)=J_r(w,u)$. Therefore Proposition \ref{prop:translate} and (\ref{eq:Ecnj1nj2}) gives
\begin{align*}
\mathbb E_\pi|\langle w_\pi,u\rangle|
&\ge\max\left\{n(n-1)|d(w)d(u)|,\,\frac12\mathbb E_\pi|\langle\widetilde w_\pi,\widetilde u\rangle|\right\}\\&\geq c\Big(n(n-1)|d(w)d(u)|+n^{3/2}J_1(w,u)+nJ_2(w,u)\Big).
\end{align*}
This completes the proof.
\end{proof}
To prove the Theorem~\ref{thm:five}, we use the following lemma, which guarantees that among three \(\mathbb R^2\)-valued random vectors with sufficiently large expected norms, two have a large expected inner product.
\begin{lemma}\label{lem:packing}
Let $a>0$, $X_1,X_2,X_3$ be $\mathbb R^2$-valued random vectors such that for $i=1,2,3$, $\E\norm{X_i}_2\ge a$. Then there exist $i<j$ such that, for independent samples of $X_i$ and $X_j$,
\begin{equation*}
 \E\abs{\ip{X_i}{X_j}}\ge\frac{a^2}{4}.
\end{equation*}
\end{lemma}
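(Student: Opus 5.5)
The plan is to reduce to the case of equal expected norms, and then to bound $\abs{\ip{X}{Y}}$ from below by a kernel with a simple Fourier structure. Pigeonholing over the three pairwise averages will then finish the argument.

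\textbf{Normalisation.} Let $m_i=\E\norm{X_i}_2\ge a$, and replace $X_i$ by $X_i'=(a/m_i)X_i$. Then $\E\norm{X_i'}_2=a$ exactly, and every $\E\abs{\ip{X_i'}{X_j'}}$ is at most the corresponding original quantity, because the scaling factors are at most $1$. So we may assume $\E\norm{X_i}_2=a$ for all $i$. We may also assume $a<\infty$ and that all expectations are finite; otherwise we truncate, or the claim is trivial.

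\textbf{Quadratic lower bound for the kernel.} For nonzero $x,y\in\R^2$ with angle $\theta$ between them, $\abs{\ip{x}{y}}=\norm{x}_2\norm{y}_2\abs{\cos\theta}\ge\norm{x}_2\norm{y}_2\cos^2\theta$, since $\abs{\cos\theta}\le1$. Moreover $\cos^2\theta=\tfrac12(1+\cos 2\theta)$. Identify $\R^2$ with $\mathbb C$ and write $x=\norm{x}_2e^{i\phi}$. For each $i$, set $z_i=\E\bigl[\norm{X_i}_2e^{2i\phi_i}\bigr]\in\mathbb C$, where $\phi_i$ is the argument of $X_i$ (put $z$-contribution $0$ when $X_i=0$). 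Then $\abs{z_i}\le\E\norm{X_i}_2=a$. For independent samples, $\cos 2(\phi_i-\phi_j)=\operatorname{Re}\bigl(e^{2i\phi_i}\overline{e^{2i\phi_j}}\bigr)$, so
\begin{equation*}
\E\abs{\ip{X_i}{X_j}}\ge\frac12\Bigl(\E\norm{X_i}_2\,\E\norm{X_j}_2+\operatorname{Re}\bigl(z_i\overline{z_j}\bigr)\Bigr)=\frac12\Bigl(a^2+\operatorname{Re}\bigl(z_i\overline{z_j}\bigr)\Bigr).
\end{equation*}

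\textbf{Summing over pairs.} Summing over the three pairs and using
\begin{equation*}
\sum_{i<j}\operatorname{Re}\bigl(z_i\overline{z_j}\bigr)=\frac12\Bigl(\Bigl|\sum_i z_i\Bigr|^2-\sum_i\abs{z_i}^2\Bigr)\ge-\frac{3a^2}{2},
\end{equation*}
we get $\sum_{i<j}\E\abs{\ip{X_i}{X_j}}\ge\tfrac12\bigl(3a^2-\tfrac32a^2\bigr)=\tfrac34a^2$. Hence some pair satisfies $\E\abs{\ip{X_i}{X_j}}\ge a^2/4$.

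The only genuine idea is the replacement $\abs{\cos\theta}\ge\cos^2\theta$. It turns the non-bilinear quantity into one that is bilinear in the "doubled-angle" first moments $z_i$, and the doubling of the angle is what encodes that $\pm x$ play the same role. The rest is bookkeeping. The main point needing care is the normalisation step: scaling down to equal norms is necessary, since without it the negative term $\sum_i\abs{z_i}^2$ could dominate when the $m_i$ are very unequal. It is legitimate because shrinking vectors only decreases the left-hand quantities.
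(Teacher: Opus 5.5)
Your proof is correct and is essentially the paper's argument in complex notation: your inequality $\abs{\cos\theta}\ge\cos^2\theta$ is the paper's bound $\abs{\ip{X_i}{X_j}}\ge\ip{X_i}{X_j}^2/(\norm{X_i}_2\norm{X_j}_2)$. Your doubled-angle moment $z_i$ is exactly the traceless part of the paper's matrix $m_iB_i=\E\bigl(X_iX_i^{\mathsf T}\norm{X_i}_2^{-1}\mathbf 1_{\{X_i\ne0\}}\bigr)$, so that $\tfrac12\bigl(m_im_j+\operatorname{Re}(z_i\overline{z_j})\bigr)=m_im_j\operatorname{tr}(B_iB_j)$; your pair-summing identity with $\abs{z_i}\le m_i$ is the paper's Frobenius-norm step ($\norm{B_1+B_2+B_3}_{\mathrm F}^2\ge 9/2$, $\norm{B_i}_{\mathrm F}^2\le 1$), and your rescaling to $\E\norm{X_i}_2=a$ plays the role of the paper's trace normalisation followed by $m_im_j\ge a^2$.
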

\begin{proof}[\bf Proof]
	The assertion is trivial if \(a=0\), so assume \(a>0\). Set \(m_i=\mathbb E_{X_i}\|X_i\|_2\ge a\), and define
	\begin{equation*}
	B_i=\frac1{m_i}\mathbb E_{X_i}\left(\frac{X_iX_i^\mathsf{T}}{\|X_i\|_2}\mathbf 1_{\{X_i\ne0\}}\right).
	\end{equation*}
	For every \(z\in\mathbb R^2\),
	\begin{equation*}
	z^\mathsf{T}B_iz=\frac1{m_i}\mathbb E_{X_i}\left(\frac{(z^\mathsf{T}X_i)^2}{\|X_i\|_2}\mathbf 1_{\{X_i\ne0\}}\right)\ge0,
	\end{equation*}
	so \(B_i\succeq0\). Moreover,
	\begin{equation}\label{eq:trB=1}
	\operatorname{tr}B_i=\frac1{m_i}\mathbb E_{X_i}\left(\frac{\operatorname{tr}(X_iX_i^\mathsf{T})}{\|X_i\|_2}
	\mathbf 1_{\{X_i\ne0\}}\right)=\frac1{m_i}\mathbb E_{X_i}\|X_i\|_2=1.
	\end{equation}
For independent samples \(X_i\) and \(X_j\), independently gives
\begin{align*}
m_im_j\operatorname{tr}(B_iB_j)&=\mathbb E_{X_i,X_j}\left(\frac{\operatorname{tr}(X_iX_i^\mathsf{T} X_jX_j^\mathsf{T})}{
\|X_i\|_2\|X_j\|_2}\mathbf 1_{\{X_i\ne0,X_j\ne0\}}\right)\\&=\mathbb E_{X_i,X_j}\left(\frac{\langle X_i,X_j\rangle^2}{\|X_i\|_2\|X_j\|_2}\mathbf 1_{\{X_i\ne0,X_j\ne0\}}\right)\\&\le
\mathbb E_{X_i,X_j}|\langle X_i,X_j\rangle|,
\end{align*}
Hence
\begin{equation}\label{eq:E>a^2trBB}
\mathbb E_{X_i,X_j}|\langle X_i,X_j\rangle|\ge a^2\operatorname{tr}(B_iB_j).
\end{equation}
It remains to show that $\operatorname{tr}(B_iB_j)\ge1/4$ for some \(i<j\). Since \(B_1+B_2+B_3\succeq0\) and (\ref{eq:trB=1}), we have
\begin{equation*}
\|B_1+B_2+B_3\|_\mathrm{F}^2\ge\frac{(\operatorname{tr}(B_1+B_2+B_3))^2}{2}=\frac92.
\end{equation*}
On the other hand, since \(B_i\succeq0\) and (\ref{eq:trB=1}), $\|B_i\|_\mathrm{F}^2\le1$. Therefore
\begin{equation*}
\frac92\le\|B_1+B_2+B_3\|_\mathrm{F}^2=\sum_{i=1}^3\|B_i\|_\mathrm{F}^2+2\sum_{i<j}\operatorname{tr}(B_iB_j)\le
3+2\sum_{i<j}\operatorname{tr}(B_iB_j).
\end{equation*}
Thus
\begin{equation*}
\sum_{i<j}\operatorname{tr}(B_iB_j)\ge\frac34.
\end{equation*}
There are three pairs, so for some \(i<j\), $\operatorname{tr}(B_iB_j)\ge1/4$. Substituting this into (\ref{eq:E>a^2trBB}) gives $\mathbb E|\langle X_i,X_j\rangle|\ge{a^2}/{4}$.
\end{proof}
\begin{proof}[\bf Proof of Theorem \ref{thm:five}]
By Lemma~\ref{lem:mass}, there exists an absolute constant \(c_0>0\) such that every \(w_s([n])=0\) with \(\|w_s\|_1=n(n-1)\) satisfies $M_1(w_s)+M_2(w_s)\ge c_0$. Hence, for each \(s\), at least one of \(M_1(w_s)\) and \(M_2(w_s)\) is at least \(c_0/2\). By the pigeonhole principle, there exist three functions, say \(w_{s_1},w_{s_2},w_{s_3}\), and some \(r\in\{1,2\}\) such that for $k=1,2,3$, $M_r(w_{s_k})\ge{c_0}/{2}$.

Apply Lemma~\ref{lem:packing} to the three random vectors $\Phi_r^{w_{s_1}}$, $\Phi_r^{w_{s_2}}$ and $\Phi_r^{w_{s_3}}$. There exist two of them, say \(w_i,w_j\), such that
\begin{equation*}
J_r(w_i,w_j)=\mathbb E\left|\left\langle\Phi_r^{w_i},\Phi_r^{w_j}\right\rangle
\right|\ge\frac14\left(\frac{c_0}{2}\right)^2=\frac{c_0^2}{16}.
\end{equation*}
If \(r=1\), Theorem~\ref{thm:profile-disc} gives
\begin{equation*}
\disc^+(w_i,w_j)\disc^-(w_i,w_j)\ge c n^4J_1(w_i,w_j)^2\ge c n^4.
\end{equation*}
If \(r=2\), then
\begin{equation*}
	\disc^+(w_i,w_j)\disc^-(w_i,w_j)\ge c n^3J_2(w_i,w_j)^2\ge c n^3.
\end{equation*}
After adjusting the absolute constant, both cases give
\begin{equation*}
\disc^+(w_i,w_j)\disc^-(w_i,w_j)\ge c n^3.
\end{equation*}
For an absolute constant \(c'>0\), we have $\disc(w_i,w_j)\ge c'n^{3/2}$. This completes the proof.
\end{proof}

\section{Orthogonal Sets of Weightings}\label{sec:zero}
In this section, we study the structure of pairwise zero-discrepant weight functions. Using the orthogonal decomposition from Section~\ref{sec:decomp}, we characterize when two centered weight functions have zero discrepancy. As consequences, we prove that a pairwise zero-discrepant family has size at most four and describe the structure of the extremal four-element families.

Define the five mutually orthogonal subspaces $ V_0=\{\lambda\1:\lambda\in\R\}$,
\begin{equation*}
 V_1^+=\left\{w:w(x,y)=p_x+p_y,\ \sum_xp_x=0\right\},\qquad V_1^-=\left\{w:w(x,y)=q_x-q_y,\ \sum_xq_x=0\right\},
\end{equation*}
\begin{equation*}
  V_2^+=\left\{w:w=w^\mathsf{T},\ \sum_{y\ne x}w(x,y)=0\ \text{for every }x\right\}, 
\end{equation*}
and
\begin{equation*}
V_2^-=\left\{w:w=-w^\mathsf{T},\ \sum_{y\ne x}w(x,y)=0\ \text{for every }x\right\}.
\end{equation*}
By Lemma \ref{lem:canonical-decomposition}, we have
\begin{equation*}\label{eq:five-spaces}
\mathbb R^{[n]^{(2)}}=V_0\oplus V_1^+\oplus V_1^-\oplus V_2^+\oplus V_2^-.
\end{equation*}
Each subspace is preserved by relabelling. Hence for centered \(w,u\) and every \(\pi\in S_n\),
\begin{equation}\label{eq:channel-decomp}
\ip{w_\pi}{u}=\ip{(g_w^++g_w^-)_\pi}{g_u^++g_u^-}+\ip{(h_w^+)_\pi}{h_u^+}+\ip{(h_w^-)_\pi}{h_u^-}.
\end{equation}
For centered \(w\), define $\alpha_w=\sqrt{2(n-2)}\bigl(p_1(w),\ldots,p_n(w)\bigr)$ and $\beta_w=\sqrt{2n}\bigl(q_1(w),\ldots,q_n(w)\bigr)$. Since
\begin{equation*}
\sum_xp_x(w)=\sum_xq_x(w)=0,
\end{equation*}
both vectors have coordinate sum zero. Define
\begin{equation*}
\Gamma_w=\begin{pmatrix}\|\alpha_w\|_2^2&\langle\alpha_w,\beta_w\rangle\\\langle\alpha_w,\beta_w\rangle&\|\beta_w\|_2^2\end{pmatrix}.
\end{equation*}
Then \(\Gamma_w\) is positive semidefinite, and $\operatorname{rank}(\Gamma_w)=\dim\operatorname{span}\{\alpha_w,\beta_w\}$.
\begin{lemma}\label{lem:second-moment}
	Let \(a,b,c,d\in\mathbb R^n\) satisfy $\sum_i a_i=\sum_i b_i=\sum_i c_i=\sum_i d_i=0$, and let \(\pi\) be uniform in \(S_n\). Then
	\begin{equation*}
	\mathbb E_\pi\bigl(\langle a_\pi,b\rangle\langle c_\pi,d\rangle\bigr)=\frac{\langle a,c\rangle\langle b,d\rangle}{n-1}.
	\end{equation*}
\end{lemma}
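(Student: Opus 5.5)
We compute the expectation by expanding both inner products over coordinates and using the joint distribution of $\pi^{-1}$ on pairs of indices. Here $a_\pi$ denotes the vector with $(a_\pi)_i=a_{\pi^{-1}(i)}$, consistent with the relabelling convention. Then
\begin{equation*}
\langle a_\pi,b\rangle\langle c_\pi,d\rangle=\sum_{i,j}a_{\pi^{-1}(i)}c_{\pi^{-1}(j)}\,b_id_j .
\end{equation*}
We take expectations term by term and split the double sum into the diagonal $i=j$ and the off-diagonal $i\ne j$.

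For $i=j$, the index $\pi^{-1}(i)$ is uniform on $[n]$. Hence
\begin{equation*}
\E\bigl(a_{\pi^{-1}(i)}c_{\pi^{-1}(i)}\bigr)=\frac1n\langle a,c\rangle .
\end{equation*}
For $i\ne j$, the pair $(\pi^{-1}(i),\pi^{-1}(j))$ is uniform over ordered pairs of distinct indices. Hence
\begin{equation*}
\E\bigl(a_{\pi^{-1}(i)}c_{\pi^{-1}(j)}\bigr)=\frac{1}{n(n-1)}\sum_{k\ne l}a_kc_l=\frac{1}{n(n-1)}\Bigl(\bigl(\textstyle\sum_k a_k\bigr)\bigl(\textstyle\sum_l c_l\bigr)-\langle a,c\rangle\Bigr)=-\frac{\langle a,c\rangle}{n(n-1)},
\end{equation*}
where the last step uses the zero-sum condition on $a$ and $c$.

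Substituting these back gives
\begin{equation*}
\E_\pi\bigl(\langle a_\pi,b\rangle\langle c_\pi,d\rangle\bigr)=\frac{\langle a,c\rangle}{n}\sum_i b_id_i-\frac{\langle a,c\rangle}{n(n-1)}\sum_{i\ne j}b_id_j .
\end{equation*}
We now use the zero-sum condition on $b$ and $d$:
\begin{equation*}
\sum_{i\ne j}b_id_j=\bigl(\textstyle\sum_i b_i\bigr)\bigl(\textstyle\sum_j d_j\bigr)-\langle b,d\rangle=-\langle b,d\rangle .
\end{equation*}
The total is therefore
\begin{equation*}
\langle a,c\rangle\langle b,d\rangle\Bigl(\frac1n+\frac1{n(n-1)}\Bigr)=\frac{\langle a,c\rangle\langle b,d\rangle}{n-1}.
\end{equation*}

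The only step needing care is the off-diagonal expectation. One must check that the pair $(\pi^{-1}(i),\pi^{-1}(j))$ is uniform on ordered distinct pairs, which holds because $\pi$ is uniform in $S_n$. One must also apply the zero-sum hypotheses in the right places: on $a,c$ for the off-diagonal expectation, and on $b,d$ for the off-diagonal sum. Nothing else is delicate. If the opposite convention $(a_\pi)_i=a_{\pi(i)}$ were intended, the identical argument applies with $\pi$ in place of $\pi^{-1}$, since both are uniform.
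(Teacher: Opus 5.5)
Your proof is correct and follows the same route as the paper's: expand both inner products over coordinates, split into diagonal and off-diagonal terms, use the zero-sum conditions on $a,c$ for the off-diagonal expectation and on $b,d$ for $\sum_{i\ne j}b_id_j=-\langle b,d\rangle$. The arithmetic $\frac1n+\frac1{n(n-1)}=\frac1{n-1}$ matches as well.
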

\begin{proof}[\bf Proof]
	Expanding the two inner products gives
	\begin{equation*}
	\mathbb E_\pi\bigl(\langle a_\pi,b\rangle\langle c_\pi,d\rangle\bigr)= \mathbb{E}_\pi \left( \left( \sum_i a_{\pi^{-1}(i)} b_i \right) \left( \sum_j c_{\pi^{-1}(j)} d_j \right) \right)=
	\sum_{i,j}b_i d_j\,\mathbb E_\pi\left(a_{\pi^{-1}(i)}c_{\pi^{-1}(j)}\right).
	\end{equation*}
	For \(i=j\),
	\begin{equation*}
	\mathbb E_\pi\left(a_{\pi^{-1}(i)}c_{\pi^{-1}(i)}\right)= \sum_{r=1}^n a_r c_r \mathbb{P}\left(\pi^{-1}(i) = r\right)=\frac{1}{n} \sum_{r=1}^n a_r c_r=\frac{\langle a,c\rangle}{n}.
	\end{equation*}
	For \(i\ne j\),
	\begin{equation*}
\mathbb E_\pi\bigl(a_{\pi^{-1}(i)}c_{\pi^{-1}(j)}\bigr)=\frac{1}{n(n-1)}\sum_{r \neq s} a_r c_s =\frac{1}{n(n-1)}\left(\sum_{r,s} a_r c_s - \sum_{r} a_r c_r\right)=-\frac{\langle a,c\rangle}{n(n-1)}.
	\end{equation*}
	Since
	\begin{equation*}
	\sum_{i \neq j} b_i d_j = \sum_{i,j} b_i d_j - \sum_{i} b_i d_i = \left( \sum_{i} b_i \right) \left( \sum_{j} d_j \right) - \sum_{i} b_i d_i = - \langle b, d \rangle.
	\end{equation*}
	we obtain
	\begin{equation*}
	\mathbb E_\pi\bigl(\langle a_\pi,b\rangle\langle c_\pi,d\rangle\bigr)= \frac{\langle a, c \rangle}{n} \sum_{i} b_i d_i - \frac{\langle a, c \rangle}{n(n-1)} \sum_{i \neq j} b_i d_j=\frac{\langle a,c\rangle\langle b,d\rangle}{n}+\frac{\langle a,c\rangle\langle b,d\rangle}{n(n-1)}=\frac{\langle a,c\rangle\langle b,d\rangle}{n-1}.
	\end{equation*}
This proves the lemma.
\end{proof}
\begin{theorem}\label{thm:zero-classification}
Let $n\ge4$, and let $w([n])=u([n])=0$. Then $\disc(w,u)=0$ if and only if all three conditions hold:
\begin{enumerate}[label=(\roman*)]
\item $h_w^+=0$ or $h_u^+=0$;\label{itemi}
\item $h_w^-=0$ or $h_u^-=0$;\label{itemii}
\item $\operatorname{tr}(\Gamma_w\Gamma_u)=0$, equivalently \(\operatorname{col}(\Gamma_w)\perp
\operatorname{col}(\Gamma_u)\) in \(\mathbb R^2\).\label{itemiii}
\end{enumerate}
Consequently, if $\mathcal F$ is a pairwise zero-discrepant family of nonzero centered weight functions, then
\begin{equation*}\label{eq:resource}
 \sum_{w\in\mathcal F}
 \left(\rank(\Gamma_w)+\mathbf1_{\{h_w^+\ne0\}}+\mathbf1_{\{h_w^-\ne0\}}\right)\le4,
\end{equation*}
and hence $|\mathcal F|\le4$.
\end{theorem}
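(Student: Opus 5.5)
Zero discrepancy means $\max_\pi\ip{w_\pi}{u}=\min_\pi\ip{w_\pi}{u}$, so $\ip{w_\pi}{u}$ is constant in $\pi$. Since $\E_\pi\ip{w_\pi}{u}=0$ for centered $w,u$, this is equivalent to $\E_\pi\ip{w_\pi}{u}^2=0$. The plan is to compute this second moment through the channel decomposition \eqref{eq:channel-decomp}.

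Write $X_1=\ip{(g_w^++g_w^-)_\pi}{g_u^++g_u^-}$, $X_2^+=\ip{(h_w^+)_\pi}{h_u^+}$ and $X_2^-=\ip{(h_w^-)_\pi}{h_u^-}$.

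\textbf{Step 1 (sufficiency).} Suppose (i)--(iii) hold. By (i) and (ii), $X_2^\pm\equiv0$. For $X_1$, expand $g^+_w(x,y)=p_x+p_y$ and $g_w^-(x,y)=q_x-q_y$, and use $\sum_x p_x=\sum_x q_x=0$. Summing over ordered pairs $x\neq y$ gives
\[
\ip{g_w^+}{g_u^+}=2(n-2)\ip{p(w)}{p(u)},\qquad \ip{g_w^-}{g_u^-}=2n\ip{q(w)}{q(u)},
\]
and cross terms vanish by symmetry/skew-symmetry. Hence
\[
X_1=\ip{(\alpha_w)_\pi}{\alpha_u}+\ip{(\beta_w)_\pi}{\beta_u},
\]
with the normalisations chosen exactly for this. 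Applying Lemma~\ref{lem:second-moment} to the four resulting products gives
\[
\E X_1^2=\frac{1}{n-1}\Bigl(\|\alpha_w\|^2\|\alpha_u\|^2+2\ip{\alpha_w}{\beta_w}\ip{\alpha_u}{\beta_u}+\|\beta_w\|^2\|\beta_u\|^2\Bigr)=\frac{\operatorname{tr}(\Gamma_w\Gamma_u)}{n-1}.
\]
So (iii) forces $\E X_1^2=0$, hence $X_1\equiv0$, and sufficiency follows. The equivalence of $\operatorname{tr}(\Gamma_w\Gamma_u)=0$ with $\operatorname{col}\Gamma_w\perp\operatorname{col}\Gamma_u$ is the standard PSD fact: $\operatorname{tr}(AB)=\|A^{1/2}B^{1/2}\|_F^2$.

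\textbf{Step 2 (necessity).} Show that the three channels are uncorrelated under random $\pi$, so that
\[
\E\ip{w_\pi}{u}^2=\E X_1^2+\E (X_2^+)^2+\E (X_2^-)^2.
\]
Then each term must vanish. A clean route is Schur orthogonality: $V_1^+\oplus V_1^-$, $V_2^+$ and $V_2^-$ are $S_n$-invariant, and $\E_\pi\ip{A_\pi}{B}\ip{C_\pi}{D}$ is the trace of $(\text{averaging operator})$ applied to $A\otimes C$. Cross terms vanish if these modules share no irreducible constituents:
\begin{itemize}
\item $V_1^\pm$ are copies of the standard representation $S^{(n-1,1)}$.
\item $V_2^+\cong S^{(n-2,2)}$ (symmetric zero-row-sum functions).
\item $V_2^-\cong S^{(n-2,1,1)}$ (skew zero-sum functions).
\end{itemize}
These are pairwise non-isomorphic, so cross terms vanish. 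Alternatively, one can verify it directly: conditioning on $\pi$ of a pair, the row-sum-zero property kills mixed expectations.

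\textbf{Step 3 ($V_2^\pm$ channels).} Next, $\E(X_2^+)^2=0$ forces $h_w^+=0$ or $h_u^+=0$. Since $V_2^+$ is irreducible, Schur gives
\[
\E_\pi\ip{A_\pi}{B}^2=\frac{\|A\|^2\|B\|^2}{\dim V_2^+}
\]
for $A,B\in V_2^+$, which is nonzero unless one factor is zero. For $n\ge4$ these modules are nonzero and irreducible (for $n=4$, check $(2,2)$ and $(2,1,1)$ exist). The same argument handles $V_2^-$. The $X_1$ channel needs Lemma~\ref{lem:second-moment} because $V_1^+\oplus V_1^-$ is two copies of one irreducible; this is precisely why $\Gamma$ appears.

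\textbf{Step 4 (counting).} Pairwise, (i) and (ii) say each of the two $V_2^\pm$ channels is used by at most one member of $\mathcal F$. By (iii), the column spaces $\operatorname{col}\Gamma_w$ are pairwise orthogonal subspaces of $\R^2$, so $\sum_w\rank\Gamma_w\le2$. Adding these gives the stated bound of $4$. Each nonzero $w$ contributes at least $1$, since $w\neq0$ forces one component nonzero and $g_w\neq0$ iff $(\alpha_w,\beta_w)\ne0$. Hence $|\mathcal F|\le4$.

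\textbf{Main obstacle.} The hardest point is the channel orthogonality and irreducibility claim in Steps 2--3. I would first try the representation-theoretic identification. If that gets murky, I would fall back on explicit fourth-order moment computations for random $\pi$ acting on pairs.
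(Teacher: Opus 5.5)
Your proof is correct, but for the necessity of (i) and (ii) it takes a genuinely different route from the paper. You derive everything from one second-moment identity. Schur orthogonality, applied to $V_1^+\oplus V_1^-\cong 2S^{(n-1,1)}$, $V_2^+\cong S^{(n-2,2)}$ and $V_2^-\cong S^{(n-2,1,1)}$, kills the cross terms and gives
\[
\E_\pi\ip{w_\pi}{u}^2=\frac{\operatorname{tr}(\Gamma_w\Gamma_u)}{n-1}+\frac{2\norm{h_w^+}_2^2\norm{h_u^+}_2^2}{n(n-3)}+\frac{2\norm{h_w^-}_2^2\norm{h_u^-}_2^2}{(n-1)(n-2)},
\]
so all three conditions follow at once.

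The paper never computes the $V_2^\pm$ moments or any cross term. It proves (i) and (ii) pointwise by an annihilation argument built from explicit combinations of relabellings of $w$:
\[
\Delta_Qw+(\Delta_Qw)_\sigma=2C_{h_w^+}(Q)R_Q,\qquad \Delta_Qw=w-w_{\tau_1}-w_{\tau_2}+w_{\tau_1\tau_2},\qquad \sigma=(ac)(bd),
\]
and $\sum_{\rho\in S_{\{a,b,c\}}}\operatorname{sgn}(\rho)w^\rho=2\bigl(h_w^-(a,b)+h_w^-(b,c)+h_w^-(c,a)\bigr)T_{a,b,c}$. It then checks by hand that the orbits of $R_Q$ and $T_{a,b,c}$ span $V_2^+$ and $V_2^-$. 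Since every relabelling of $w$ is orthogonal to $u$, this forces $u\perp V_2^\pm$ whenever $h_w^\pm\ne0$. Only after the $h$-channels vanish identically does the paper apply Lemma~\ref{lem:second-moment} to the remaining first-order channel, exactly as in your Step 1.

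The paper's argument is elementary and self-contained; in effect it is a hands-on proof of the cyclicity property it needs for $V_2^\pm$. Yours rests on Young's rule for the permutation module on $2$-sets, the Pieri rule for the module of skew-symmetric functions, and Schur orthogonality for absolutely irreducible real representations. These are standard facts, but you should state or cite them when you identify $V_2^\pm$ with Specht modules. In return, your route gives an exact quantitative formula for $\E_\pi\ip{w_\pi}{u}^2$, not just a description of when it vanishes.
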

\begin{proof}[\bf Proof]
Because $w,u$ are centered, for every $\pi\in S_n$
\begin{equation}\label{eq:disc-zero-all}
 \disc(w,u)=0\quad\Longleftrightarrow\quad \ip{w_\pi}{u}=0.
\end{equation}
\begin{claim}\label{claim:i}
	If $h_w^+\neq 0$, then $h_u^+=0$.
\end{claim}
\begin{proof}
For four distinct vertices \(Q=(a,b;c,d)\), let \(R_Q\in V_2^+\) 
\begin{equation*}
R_Q(x, y) = \begin{cases} 
	1, & (x, y) \in \{(a, c), (c, a), (b, d), (d, b)\}, \\
	-1, & (x, y) \in \{(a, d), (d, a), (b, c), (c, b)\}, \\
	0, & \text{otherwise}.
\end{cases}
\end{equation*}
 We first show that these vectors span \(V_2^+\). Suppose \(z\in V_2^+\) is orthogonal to every \(R_Q\). Since \(z\) is symmetric,
\begin{equation*}
\frac12\langle z,R_Q\rangle=\frac{1}{2} \sum_{x \neq y} z(x, y) R_Q(x, y)=z(a,c)-z(b,c)-z(a,d)+z(b,d)=0.
\end{equation*}
Thus, for fixed \(a\ne b\), $z(a,c)-z(b,c)=z(a,d)-z(b,d)$ whenever \(c,d\notin\{a,b\}\). Summing over \(d\notin\{a,b,c\}\) gives
\begin{align*}
	(n-3)\left(z(a,c)-z(b,c)\right)&=\sum_{d\notin\{a,b,c\}}\bigl(z(a,d)-z(b,d)\bigr)\\&=-z(a,b)-z(a,c)+z(b,a)+z(b,c)\\&=-\left(z(a,c)-z(b,c)\right).
\end{align*}
Then $(n-2)\left(z(a,c)-z(b,c)\right)=0$. Hence $z(a,c)=z(b,c)$ whenever \(a,b,c\) are distinct. Fix \(x\in[n]\) and let \(y,z\ne x\). By symmetry and the preceding identity, $z(x,y)=z(y,x)=z(z,x)=z(x,z)$. Thus all off-diagonal entries in each row are equal. Hence, for any fixed \(y\ne x\),
\begin{equation*}
\sum_{z\ne x} z(x,z)=(n-1)z(x,y)=0.
\end{equation*}
Therefore \(z(x,y)=0\) for every \(x\ne y\), and hence \(z=0\).
Consequently, span$\{R_Q^\pi:\pi\in S_n\}=V_2^+$.

Suppose that \(h_w^+\ne0\). Since span$\{R_Q^\pi:\pi\in S_n\}=V_2^+$ and $\langle h_w^+,R_Q\rangle=2C_{h_w^+}(Q)$, there exists an ordered quadruple \(Q=(a,b;c,d)\) such that
	\(C_{h_w^+}(Q)\ne0\). Let \(\tau_1=(ab)\) and \(\tau_2=(cd)\), and define
\begin{equation*}
	\Delta_Qw=w-w_{\tau_1}-w_{\tau_2}+w_{\tau_1\tau_2}.
\end{equation*}
Then for $x\neq y$
\begin{align*}
	(\Delta_Qg_w^+)(x,y)&=g_w^+(x,y)-(g_w^+)_{\tau_1}(x,y)-(g_w^+)_{\tau_2}(x,y)+(g_w^+)_{\tau_1\tau_2}(x,y)\\&=(p_x+p_y)-(p_{\tau_1(x)}+p_{\tau_1(y)})-(p_{\tau_2(x)}+p_{\tau_2(y)})+(p_{\tau_1\tau_2(x)}+p_{\tau_1\tau_2(y)})\\&=\bigl(p_x-p_{\tau_1(x)}-p_{\tau_2(x)}+p_{\tau_1\tau_2(x)}\bigr)+\bigl(p_y-p_{\tau_1(y)}-p_{\tau_2(y)}+p_{\tau_1\tau_2(y)}
	\bigr)\\&=0.
\end{align*}
Similarly, $(\Delta_Qg_w^-)(x,y)=0$. Using $w=g_w^++g_w^-+h_w^++h_w^-$, we have
\begin{equation*}
\Delta_Q w=w-w_{\tau_1}-w_{\tau_2}+w_{\tau_1\tau_2}=\Delta_Q g_w^++\Delta_Q g_w^-+\Delta_Q h_w^++\Delta_Q h_w^-=\Delta_Q h_w^++\Delta_Q h_w^-,
\end{equation*}
For the symmetric part,
\begin{align*}
	(\Delta_Q h_w^+)(a, c) &= h_w^+(a, c) - (h_w^+)_{\tau_1}(a, c) - (h_w^+)_{\tau_2}(a, c) + (h_w^+)_{\tau_1 \tau_2}(a, c) \\
	&= h_w^+(a, c) - h_w^+(\tau_1(a), \tau_1(c)) - h_w^+(\tau_2(a), \tau_2(c))+ h_w^+(\tau_1\tau_2(a), \tau_1 \tau_2(c)) \\
	&= h_w^+(a, c) - h_w^+(b, c) - h_w^+(a, d) + h_w^+(b, d) \\
	&= C_{h_w^+}(a, b; c, d) \\
	&= C_{h_w^+}(Q).
\end{align*}
	The same calculation at the remaining affected coordinates gives
	\begin{align*}
		(\Delta_Q h_w^+)(a, d) &= h_w^+(a, d) - h_w^+(b, d) - h_w^+(a, c) + h_w^+(b, c) = -C_{h_w^+}(Q), \\
		(\Delta_Q h_w^+)(b, c) &= h_w^+(b, c) - h_w^+(a, c) - h_w^+(b, d) + h_w^+(a, d)= -C_{h_w^+}(Q), \\
		(\Delta_Q h_w^+)(b, d) &= h_w^+(b, d) - h_w^+(a, d) - h_w^+(b, c) + h_w^+(a, c) = C_{h_w^+}(Q).
	\end{align*}
	Then $\Delta_Qh_w^+=C_{h_w^+}(Q)R_Q$. For the skew-symmetric part, direct calculation gives $(\Delta_Qh_w^-)(a,c)=(\Delta_Qh_w^-)(b,d)=C_{h_w^-}(Q)$ and $(\Delta_Qh_w^-)(a,d)=(\Delta_Qh_w^-)(b,c)=-C_{h_w^-}(Q)$. Let \(\sigma=(ac)(bd)\) 
	\begin{align*}
		\left((\Delta_Q h_w^-)_\sigma\right)(a, c) &= (\Delta_Q h_w^-)(c, a) \\
		&= h_w^-(c, a) - h_w^-(c, b) - h_w^-(d, a) + h_w^-(d, b) \\
		&= -h_w^-(a, c) + h_w^-(b, c) + h_w^-(a, d) - h_w^-(b, d) \\
		&= -\left( h_w^-(a, c) - h_w^-(b, c) - h_w^-(a, d) + h_w^-(b, d) \right) \\
		&= -(\Delta_Q h_w^-)(a, c).
	\end{align*}
	Similarly, we have $\bigl((\Delta_Q h_w^-)_\sigma\bigr)(a,d)=-(\Delta_Q h_w^-)(a,d)$, $\bigl((\Delta_Q h_w^-)_\sigma\bigr)(b,c)=-(\Delta_Q h_w^-)(b,c)$ and $\bigl((\Delta_Q h_w^-)_\sigma\bigr)(b,d)=-(\Delta_Q h_w^-)(b,d)$. Then $(\Delta_Qh_w^-)_\sigma=-\Delta_Qh_w^-$. Similarly, we have $(\Delta_Q h_w^+)_\sigma = \Delta_Q h_w^+$. Then
	\begin{equation*}
		\Delta_Qw+(\Delta_Qw)_\sigma=\Delta_Qh_w^++(\Delta_Qh_w^+)_\sigma+\Delta_Qh_w^-
	+(\Delta_Qh_w^-)_\sigma=2C_{h_w^+}(Q)R_Q.
	\end{equation*}
 By (\ref{eq:disc-zero-all}), every relabelling of $\Delta_Qw+(\Delta_Qw)_{\sigma}$ is orthogonal to $u$, since this vector is a finite linear combination of relabellings of $w$. Hence, for every $\pi$,
 \begin{equation*}
\left\langle\bigl(\Delta_Qw+(\Delta_Qw)_\sigma\bigr)_\pi,u\right\rangle=2C_{h_w^+}(Q)\langle R_Q^\pi,u\rangle=0.
 \end{equation*}
Since \(C_{h_w^+}(Q)\ne0\), we have $\langle R_Q^\pi,u\rangle=0$ for every $\pi\in S_n$. Since span$\{R_Q^\pi:\pi\in S_n\}=V_2^+$, it follows that \(u\perp V_2^+\), and hence $h_u^+=0$.
\end{proof}
Interchanging $w$ and $u$ proves condition~$\ref{itemi}$.
\begin{claim}\label{claim:ii}
If $h_w^-\neq 0$, then $h_u^-=0$.
\end{claim}
\begin{proof}
	For distinct \(a,b,c\), let \(T_{a,b,c}\in V_2^-\) be defined by
	\begin{equation*}
T_{a,b,c}(x,y)=\begin{cases}1, & (x,y)\in\{(a,b),(b,c),(c,a)\},\\-1, & (x,y)\in\{(b,a),(c,b),(a,c)\},\\0, & \text{otherwise}.\end{cases}
	\end{equation*}
We first show that $\{T_{i,j,n}:1\leq i<j\leq n-1\}$ is a basis of \(V_2^-\). They are linearly independent, since the coordinate \((i,j)\) occurs only in \(T_{i,j,n}\). Moreover, for every \(h\in V_2^-\),
\item \textbf{Case 1.} $x<y<n$.

Only \(T_{x,y,n}\) contributes at the coordinate \((x,y)\), and
\(T_{x,y,n}(x,y)=1\). Hence
\begin{equation*}
\left( \sum_{1 \le i < j \le n-1} h(i, j) T_{i,j,n} \right)(x, y) = \sum_{1 \le i < j \le n-1} h(i, j) T_{i,j,n}(x, y)=h(x,y)T_{x,y,n}(x,y)=h(x,y).
\end{equation*}
\item \textbf{Case 2.} $y<x<n$.

Only \(T_{y,x,n}\) contributes at the coordinate \((x,y)\), and \(T_{y,x,n}(x,y)=-1\). Therefore
\begin{equation*}
\left(\sum_{1\le i<j\le n-1} h(i,j)T_{i,j,n}\right)(x,y)=\sum_{1\le i<j\le n-1}h(i,j)T_{i,j,n}(x,y)=h(y,x)T_{y,x,n}(x,y)=h(x,y).
\end{equation*}
\item  \textbf{Case 3.} $x<y=n$.

At the coordinate \((x,n)\), if $i<x$, then $T_{i,x,n}(x,n)=1$, while if $x<j<n$, then $T_{x,j,n}(x,n)=-1$. Hence
\begin{align*}
\left(\sum_{1\le i<j\le n-1}h(i,j)T_{i,j,n}\right)(x,n)&=\sum_{i<x}h(i,x)-\sum_{x<j<n}h(x,j)\\
&=-\sum_{i<x}h(x,i)-\sum_{x<j<n}h(x,j)\\&=-\sum_{{j<n, j\ne x}}h(x,j)=h(x,n).
\end{align*}

\item \textbf{Case 4.} $y<x=n$.

At the coordinate \((n,y)\), if \(i<y\), then \(T_{i,y,n}(n,y)=-1\). If \(y<j<n\), then \(T_{y,j,n}(n,y)=1\). Hence
\begin{align*}
\left(\sum_{1\le i<j\le n-1} h(i,j)T_{i,j,n}\right)(n,y)&=-\sum_{i<y}h(i,y)+\sum_{y<j<n}h(y,j)\\
&=\sum_{i<y}h(y,i)+\sum_{y<j<n}h(y,j)\\&=\sum_{\substack{j<n\\j\ne y}}h(y,j)=-h(y,n)=h(n,y).
\end{align*}
The four cases above show that
\begin{equation*}
h=\sum_{1\le i<j\le n-1}h(i,j)T_{i,j,n}.
\end{equation*}
Thus span$\{T_{i,j,n}:1\leq i<j\leq n-1\}=V_2^-$. For \(h\in V_2^-\), direct calculation gives
\begin{equation*}
\langle h,T_{a,b,c}\rangle=h(a,b)+h(b,c)+h(c,a)-h(b,a)-h(c,b)-h(a,c)=2\bigl(h(a,b)+h(b,c)+h(c,a)\bigr).
\end{equation*}
Since span$\{T_{i,j,n}:1\leq i<j\leq n-1\}=V_2^-$ and \(h_w^-\ne0\) there exist distinct \(a,b,c\) such that  $\langle h_w^-,T_{a,b,c}\rangle\ne0$. Indeed, otherwise \(h_w^-\) would be orthogonal to \(V_2^-\). By skew-symmetry,
\begin{equation*}
\langle h_w^-,T_{a,b,c}\rangle=2\bigl(h_w^-(a,b)+h_w^-(b,c)+h_w^-(c,a)\bigr)\ne0,
\end{equation*}
Let \(S_{\{a,b,c\}}\) be the six permutations of \(\{a,b,c\}\)(i.e. $S_{\{a,b,c\}} = \{\mathrm{id}, (ab), (ac), (bc), (abc), (acb)\}$). At the coordinate \((a,b)\), direct expansion gives
\begin{align*}
&\qquad\sum_{\rho\in S_{\{a,b,c\}}}\operatorname{sgn}(\rho)w^\rho(a,b)\\&=w^{\mathrm{id}}(a, b) + w^{(abc)}(a, b) + w^{(acb)}(a, b) - w^{(ab)}(a, b) - w^{(ac)}(a, b) - w^{(bc)}(a, b)\\&=w(a,b)+w(b,c)+w(c,a)-w(b,a)-w(c,b)-w(a,c),
\end{align*}
where $w^\rho(x, y) = w(\rho^{-1}(x), \rho^{-1}(y))$, $w^{(abc)}(a,b)=w^{(bc)}(b,a)=w(c,a)$ and 
\begin{equation*}
\operatorname{sgn}(\rho) = \begin{cases} 1, & \rho\in\{id,(abc), (acb)\}, \\ -1, & \rho \in\{(ab), (ac), (bc)\}. \end{cases}
\end{equation*}
Then
\begin{align*}
\sum_{\rho\in S_{\{a,b,c\}}}\operatorname{sgn}(\rho)w^\rho(a,b)
&=w(a,b)+w(b,c)+w(c,a)-w(b,a)-w(c,b)-w(a,c)
\\[-15pt]&=\bigl(w(a,b)-w(b,a)\bigr)+\bigl(w(b,c)-w(c,b)\bigr)+\bigl(w(c,a)-w(a,c)\bigr)
\\&=2\bigl(g_w^-(a,b)+h_w^-(a,b)\bigr)+2\bigl(g_w^-(b,c)+h_w^-(b,c)\bigr)+2\bigl(g_w^-(c,a)+h_w^-(c,a)\bigr)
\\&=2\bigl(g_w^-(a,b)+g_w^-(b,c)+g_w^-(c,a)\bigr)+2\bigl(h_w^-(a,b)+h_w^-(b,c)+h_w^-(c,a)\bigr)
\\&=2\bigl(h_w^-(a,b)+h_w^-(b,c)+h_w^-(c,a)\bigr),
\end{align*}
where $g_w^-(a,b)+g_w^-(b,c)+g_w^-(c,a)=(q_a(w)-q_b(w))+(q_b(w)-q_c(w))+(q_c(w)-q_a(w))=0$. The same calculation gives the same value at \((b,c)\) and \((c,a)\), and the opposite value at \((b,a),(c,b),(a,c)\). For \(x\notin\{a,b,c\}\), every \(\rho\in S_{\{a,b,c\}}\) fixes \(x\). Hence
\begin{equation*}
\sum_{\rho\in S_{\{a,b,c\}}}\operatorname{sgn}(\rho)w^\rho(a,x)=w(a,x)+w(b,x)+w(c,x)-w(b,x)-w(c,x)-w(a,x)=0,
\end{equation*}
and similarly for all remaining coordinates. Hence
\begin{equation*}
\sum_{\rho\in S_{\{a,b,c\}}}\operatorname{sgn}(\rho)w^\rho=2\bigl(h_w^-(a,b)+h_w^-(b,c)+h_w^-(c,a)\bigr)T_{a,b,c}.
\end{equation*}
Now let \(\pi\in S_n\). By (\ref{eq:disc-zero-all}), every relabelling of \(w\) is
orthogonal to \(u\), so
\begin{equation*}
\left\langle\left(\sum_{\rho\in S_{\{a,b,c\}}}\operatorname{sgn}(\rho)w^\rho\right)_\pi,u
\right\rangle=2\bigl(h_w^-(a,b)+h_w^-(b,c)+h_w^-(c,a)\bigr)\langle T_{a,b,c}^\pi,u\rangle=0.
\end{equation*}
Since $\langle T_{a,b,c}^\pi,u\rangle=0$ for every $\pi\in S_n$ and $\operatorname{span}\{T_{a,b,c}^{\pi} : \pi \in S_n\} = V_2^-$, it follows that $u\perp V_2^-$. Since \(h_u^-\in V_2^-\), we have $\langle u,h_u^-\rangle=\langle h_u^-,h_u^-\rangle=\|h_u^-\|_2^2=0$, and hence $h_u^-=0$.
\end{proof}
Interchanging \(w\) and \(u\) proves condition~$\ref{itemii}$.
\begin{claim}
	If \(\disc(w,u)=0\), then $\operatorname{tr}(\Gamma_w\Gamma_u)=0$ $\Longleftrightarrow$ $\operatorname{col}(\Gamma_w)\perp\operatorname{col}(\Gamma_u)$ in $\mathbb R^2$.
\end{claim}
\begin{proof}
	By (\ref{eq:channel-decomp}), (\ref{eq:disc-zero-all}), and Claims~\ref{claim:i} and~\ref{claim:ii}, we have $\langle (h_w^+)_\pi, h_u^+ \rangle = \langle (h_w^-)_\pi, h_u^- \rangle=0$. For every $\pi\in S_n$, since symmetric functions are orthogonal to skew-symmetric functions, $\langle w_\pi,u\rangle=	\left\langle(g_w^++g_w^-)_\pi,g_u^++g_u^-\right\rangle=\langle (g_w^+)_\pi,g_u^+\rangle+\langle (g_w^-)_\pi,g_u^-\rangle=0$. Using $\sum_i p_i(w)=\sum_i p_i(u)=0$, we have
	\begin{align*}
\langle (g_w^+)_\pi,g_u^+\rangle
&=\sum_{i\ne j}\bigl(p_{\pi^{-1}(i)}(w)+p_{\pi^{-1}(j)}(w)\bigr)
\bigl(p_i(u)+p_j(u)\bigr)
\\&=2(n-1)\sum_ip_{\pi^{-1}(i)}(w)p_i(u)+2\sum_{i\ne j}p_{\pi^{-1}(i)}(w)p_j(u)
\\&=2(n-1)\sum_ip_{\pi^{-1}(i)}(w)p_i(u)-2\sum_ip_{\pi^{-1}(i)}(w)p_i(u)
\\&=2(n-2)\sum_ip_{\pi^{-1}(i)}(w)p_i(u)
\\&=\langle\alpha_w^\pi,\alpha_u\rangle .
	\end{align*}
	Similarly, since $\sum_i q_i(w)=\sum_i q_i(u)=0$, we have
	\begin{align*}
\langle (g_w^-)_\pi,g_u^-\rangle
&=\sum_{i\ne j}\bigl(q_{\pi^{-1}(i)}(w)-q_{\pi^{-1}(j)}(w)\bigr)\bigl(q_i(u)-q_j(u)\bigr)
\\&=2(n-1)\sum_iq_{\pi^{-1}(i)}(w)q_i(u)-2\sum_{i\ne j}q_{\pi^{-1}(i)}(w)q_j(u)
\\&=2(n-1)\sum_iq_{\pi^{-1}(i)}(w)q_i(u)+2\sum_iq_{\pi^{-1}(i)}(w)q_i(u)
\\&=2n\sum_iq_{\pi^{-1}(i)}(w)q_i(u)
\\&=\langle\beta_w^\pi,\beta_u\rangle.
	\end{align*}
	Therefore $\left\langle(g_w^++g_w^-)_\pi,g_u^++g_u^-\right\rangle=\langle\alpha_w^\pi,\alpha_u\rangle
+\langle\beta_w^\pi,\beta_u\rangle=0$. Applying Lemma~\ref{lem:second-moment}, we obtain
\begin{align*}
\mathbb E_\pi\left(\langle\alpha_w^\pi,\alpha_u\rangle+\langle\beta_w^\pi,\beta_u\rangle
\right)^2
&=\frac{1}{n-1}\Big(\|\alpha_w\|_2^2\|\alpha_u\|_2^2+\|\beta_w\|_2^2\|\beta_u\|_2^2+2\langle\alpha_w,\beta_w\rangle\langle\alpha_u,\beta_u\rangle\Big)
\\&=\frac{1}{n-1}\operatorname{tr}(\Gamma_w\Gamma_u).
\end{align*}
Hence $\operatorname{tr}(\Gamma_w\Gamma_u)=0$. Since
\(\Gamma_w\) is positive semidefinite, let $\lambda_1,\lambda_2\ge0$, we have $\Gamma_w=
\lambda_1 v_1v_1^\mathsf{T}+\lambda_2 v_2v_2^\mathsf{T}$, where \(v_1,v_2\) are orthonormal. Then
\begin{equation*}
\operatorname{tr}(\Gamma_w\Gamma_u)
=\operatorname{tr}\left(\left(\lambda_1v_1v_1^\mathsf{T}+\lambda_2v_2v_2^\mathsf{T}\right)\Gamma_u\right)
=\lambda_1v_1^\mathsf{T}\Gamma_uv_1+\lambda_2v_2^\mathsf{T}\Gamma_uv_2.
\end{equation*}
Since \(\Gamma_u\) is positive semidefinite, if $\operatorname{tr}(\Gamma_w\Gamma_u)=0$, then whenever $\lambda_i>0$, $v_i^\mathsf{T}\Gamma_uv_i=0$. For a positive semidefinite matrix, this gives
\(\Gamma_uv_i=0\). Therefore
\begin{equation*}
\operatorname{col}(\Gamma_w)=\operatorname{span}\{v_i:\lambda_i>0\}\subseteq\ker(\Gamma_u)=
\operatorname{col}(\Gamma_u)^\perp,
\end{equation*}
and hence $\operatorname{col}(\Gamma_w)\perp\operatorname{col}(\Gamma_u)$. 

Conversely, if \(\operatorname{col}(\Gamma_w)\perp\operatorname{col}(\Gamma_u)\), then \(\Gamma_uv_i=0\) whenever \(\lambda_i>0\), and therefore
\begin{equation*}
\operatorname{tr}(\Gamma_w\Gamma_u)=\sum_{i=1}^2\lambda_i v_i^\mathsf{T}\Gamma_uv_i=0.
\end{equation*}
Thus $\operatorname{tr}(\Gamma_w\Gamma_u)=0\Longleftrightarrow\operatorname{col}(\Gamma_w)\perp\operatorname{col}(\Gamma_u)$.
\end{proof}
Conversely, suppose that conditions~$\ref{itemi}$--$\ref{itemiii}$ hold. By conditions~$\ref{itemi}$ and~$\ref{itemii}$, for every $\pi\in S_n$, we have $\langle(h_w^+)_\pi,h_u^+\rangle=\langle(h_w^-)_\pi,h_u^-\rangle=0$. By condition~$\ref{itemiii}$, we have $\mathbb E_\pi\left(\langle\alpha_w^\pi,\alpha_u\rangle+\langle\beta_w^\pi,\beta_u\rangle\right)^2=0$. Hence for every $\pi\in S_n$, we have $\left\langle(g_w^++g_w^-)_\pi,g_u^++g_u^-\right\rangle=0$. Therefore, by (\ref{eq:channel-decomp}), $\langle w_\pi,u\rangle=0$ for every $\pi\in S_n$. Hence $\disc(w,u)=0$.

Finally, let \(\mathcal F\) be pairwise zero-discrepant.
By conditions~$(i)$ and $(ii)$, we have
\begin{equation*}
\sum_{w\in\mathcal F}\mathbf 1_{\{h_w^+\ne0\}}\le1,\qquad \sum_{w\in\mathcal F}\mathbf 1_{\{h_w^-\ne0\}}\le1.
\end{equation*}
By condition~$(iii)$, the nonzero subspaces
\(\operatorname{col}(\Gamma_w)\subseteq\mathbb R^2\) are pairwise
orthogonal, and therefore
\begin{equation*}
\sum_{w\in\mathcal F}\operatorname{rank}(\Gamma_w)=\sum_{w \in \mathcal{F}} \dim \mathrm{col}(\Gamma_w)= \dim \left( \bigoplus_{w \in \mathcal{F}} \mathrm{col}(\Gamma_w) \right)\le2.
\end{equation*}
Consequently,
\begin{equation*}
\sum_{w\in\mathcal F}\left(\operatorname{rank}(\Gamma_w)+\mathbf 1_{\{h_w^+\ne0\}}+\mathbf 1_{\{h_w^-\ne0\}}\right)\le1+1+2=4.
\end{equation*}
For every nonzero centered \(w\in\mathcal F\), we have $\operatorname{rank}(\Gamma_w)+\mathbf 1_{\{h_w^+\ne0\}}+\mathbf 1_{\{h_w^-\ne0\}}\ge1$. Indeed, otherwise \(\operatorname{rank}(\Gamma_w)=0\) and \(h_w^+=h_w^-=0\). Since \(\Gamma_w\) is the Gram matrix of
\(\alpha_w\) and \(\beta_w\), this implies \(\alpha_w=\beta_w=0\), and hence \(g_w^+=g_w^-=0\). Therefore \(w=0\), a contradiction. Thus
\begin{equation*}
|\mathcal F|\le\sum_{w\in\mathcal F}\left(\operatorname{rank}(\Gamma_w)+\mathbf 1_{\{h_w^+\ne0\}}
+\mathbf 1_{\{h_w^-\ne0\}}\right)\le4.
\end{equation*}
This completes the theorem.
\end{proof}
\begin{proof}[\bf Proof of Theorem~\ref{thm:zero-family}]
	Let $a=e_1-e_2$ and $b=e_3-e_4\in\mathbb R^n$. Then for every $x\in[n]$, we have $a_xb_x=0$ and 
	\begin{equation*}
	\sum_x a_x=\sum_x b_x=0.
	\end{equation*}
	For \(x\ne y\), define 
	\begin{equation*}
	f_1(x,y)=a_x+a_y,\qquad f_2(x,y)=a_x-a_y, 
	\end{equation*}
	and
\begin{equation*}
f_3(x,y)=a_xb_y+b_xa_y,\qquad f_4(x,y)=a_xb_y-b_xa_y.
\end{equation*}
Then, $f_1\in V_1^+$ and $f_2\in V_1^-$. For $x\neq y$, we have $f_3(y,x)=f_3(x,y)$ and
\begin{equation*}
\sum_{y\ne x}f_3(x,y)=a_x\sum_{y\ne x}b_y+b_x\sum_{y\ne x}a_y=-a_xb_x-b_xa_x=-2a_xb_x=0.
\end{equation*}
Hence $f_3\in V_2^+$. Similarly, $f_4(y,x)=-f_4(x,y)$ and 
\begin{equation*}
\sum_{y\ne x}f_4(x,y)=a_x\sum_{y\ne x}b_y-b_x\sum_{y\ne x}a_y=-a_xb_x+b_xa_x=0.
\end{equation*}
Thus $f_4\in V_2^-$. Therefore $f_1\in V_1^+$, $f_2\in V_1^-$, $f_3\in V_2^+$ and $f_4\in V_2^-$.
Since these four subspaces are pairwise orthogonal and preserved by relabelling, for $i\ne j$ and every $\pi\in S_n$, we have $\langle f_i^\pi,f_j\rangle=0$. In particular, the four functions are centered and for $i\neq j$, we have $\disc(f_i,f_j)=0$.

We next compute their \(\ell_1\)-norms. Since $a_1=1$, $a_2=-1$ and for $x\notin\{1,2\}$, we have $a_x=0$. Then
\begin{equation*}
\|f_1\|_1=\sum_{y\notin\{1,2\}}\Big(|f_1(1,y)|+|f_1(y,1)|+|f_1(2,y)|+|f_1(y,2)|\Big)=4(n-2).
\end{equation*}
Similarly, for \(f_2\),
\begin{align*}
\|f_2\|_1&=\sum_{y\notin\{1,2\}}\Bigl(|f_2(1,y)|+|f_2(y,1)|+|f_2(2,y)|+|f_2(y,2)|\Bigr)
+|f_2(1,2)|+|f_2(2,1)|\\&=4(n-2)+2+2=4(n-1).
\end{align*}
Finally, \(f_3\) and \(f_4\) are nonzero exactly when one coordinate
lies in \(\{1,2\}\) and the other lies in \(\{3,4\}\). Hence
\begin{equation*}
\|f_3\|_1=\sum_{\substack{x\in\{1,2\}\\y\in\{3,4\}}}\bigl(|f_3(x,y)|+|f_3(y,x)|\bigr)=8,
\end{equation*}
and
\begin{equation*}
\|f_4\|_1=\sum_{\substack{x\in\{1,2\}\\y\in\{3,4\}}}\bigl(|f_4(x,y)|+|f_4(y,x)|\bigr)=8.
\end{equation*}
For $i=1,2,3,4$, define
\begin{equation*}
w_i=\frac{n(n-1)}{\|f_i\|_1}f_i.
\end{equation*}
Then $\|w_i\|_1=n(n-1)$ and $w_i([n])=0$. Moreover, for \(i\ne j\) and every \(\pi\in S_n\),
\begin{equation*}
\langle w_i^\pi,w_j\rangle=\frac{n^2(n-1)^2}{\|f_i\|_1\|f_j\|_1}\langle f_i^\pi,f_j\rangle\\=0.
\end{equation*}
Hence for $1\le i<j\le4$, we have $\disc(w_i,w_j)=0$. This completes the proof.
\end{proof}
\begin{corollary}
	Let \(n\ge4\), and let $\mathcal F=\{w_1,w_2,w_3,w_4\}$ be a pairwise zero-discrepant family of nonzero centered weight functions on \([n]\). Then, after reindexing, $w_1\in V_2^+$ and $w_2\in V_2^-$. Moreover, $w_3,w_4\in V_1^+\oplus V_1^-$. And $\operatorname{rank}(\Gamma_{w_3})=\operatorname{rank}(\Gamma_{w_4})=1$ with $\operatorname{col}(\Gamma_{w_3})\perp\operatorname{col}(\Gamma_{w_4})$.
	Consequently, $\mathbb R^2=\operatorname{col}(\Gamma_{w_3})\oplus\operatorname{col}(\Gamma_{w_4})$. Conversely, any four nonzero centered weight functions satisfying
	these conditions are pairwise zero-discrepant.
\end{corollary}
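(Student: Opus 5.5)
The plan is to read the structure off the resource inequality of Theorem~\ref{thm:zero-classification}. Every $w\in\mathcal F$ has weight $\rank(\Gamma_w)+\mathbf 1_{\{h_w^+\ne0\}}+\mathbf 1_{\{h_w^-\ne0\}}\ge1$, and the total weight is at most $4$. Since $|\mathcal F|=4$, every member has weight exactly $1$, and the three budgets are used with equality. In particular, $\sum_w\mathbf 1_{\{h_w^+\ne0\}}\le1$, $\sum_w\mathbf 1_{\{h_w^-\ne0\}}\le1$ and $\sum_w\rank(\Gamma_w)\le2$ must all be equalities. Therefore exactly one member, call it $w_1$, has $h_{w_1}^+\ne0$, exactly one member, $w_2$, has $h_{w_2}^-\ne0$, and the ranks sum to $2$.

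Next I identify the components of each member. Because each member has weight exactly $1$, $w_1$ has $\rank(\Gamma_{w_1})=0$ and $h_{w_1}^-=0$. Also $w_1\ne w_2$, since otherwise that single function would have weight at least $2$. A zero Gram matrix forces $\alpha_{w_1}=\beta_{w_1}=0$, so $p_x(w_1)=q_x(w_1)=0$ for all $x$ and hence $g_{w_1}^\pm=0$. By Lemma~\ref{lem:canonical-decomposition}, $w_1=h_{w_1}^+\in V_2^+$. The same argument gives $w_2=h_{w_2}^-\in V_2^-$. The remaining two functions $w_3,w_4$ have $h^\pm=0$, so $w_3,w_4\in V_1^+\oplus V_1^-$ by the decomposition.

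I then handle the ranks. The ranks of $w_1$ and $w_2$ are zero, so $\rank(\Gamma_{w_3})+\rank(\Gamma_{w_4})=2$. Each of these ranks is at least $1$, since $w_3,w_4$ are nonzero and their only possible components are $g^\pm$, which are determined by $\alpha,\beta$. Hence both ranks equal $1$. Condition~(iii) gives $\operatorname{col}(\Gamma_{w_3})\perp\operatorname{col}(\Gamma_{w_4})$. Two orthogonal lines in $\R^2$ span $\R^2$, which gives the direct sum decomposition.

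For the converse, I check conditions (i)--(iii) of Theorem~\ref{thm:zero-classification} for each pair.
\begin{enumerate}[label=(\alph*)]
\item For (i) and (ii), at most one member has $h^+\ne0$ and at most one has $h^-\ne0$, so each pair contains a function with $h^+=0$ and one with $h^-=0$.
\item For (iii), $\Gamma_{w_1}=\Gamma_{w_2}=0$, since $w_1,w_2$ lie in $V_2^\pm$ and therefore have zero row and column sums, so all $p_x,q_x$ vanish. Hence $\operatorname{tr}(\Gamma_{w_i}\Gamma_{w_j})=0$ for every pair involving $w_1$ or $w_2$. The pair $(w_3,w_4)$ satisfies (iii) by hypothesis.
\end{enumerate}
The characterization then yields $\disc(w_i,w_j)=0$ for all pairs.

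The only point needing care is the counting step that forces each summand to equal $1$. The subtlety is that one function could occupy two resources at once, for instance $h^+\ne0$ and $h^-\ne0$ simultaneously. That case is excluded because the sum of four terms, each at least $1$, is at most $4$. The rest is bookkeeping with the decomposition.
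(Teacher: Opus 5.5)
Your proposal is correct and follows the paper's argument. You saturate the resource inequality of Theorem~\ref{thm:zero-classification}, so every member contributes exactly $1$ and all three budgets are attained, then read off the component structure from the orthogonal decomposition and check (i)--(iii) for the converse; you also spell out two points the paper leaves implicit, namely that the members with $h^+\ne0$ and $h^-\ne0$ are distinct, and that $\Gamma_{w_1}=\Gamma_{w_2}=0$ in the converse.
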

\begin{proof}[\bf Proof]
	By Theorem~\ref{thm:zero-classification}, we have
	\begin{equation*}
\sum_{w\in\mathcal F}\left(\operatorname{rank}(\Gamma_w)+\mathbf 1_{\{h_w^+\ne0\}}+\mathbf 1_{\{h_w^-\ne0\}}\right)\le4.
	\end{equation*}
	Since every \(w\in\mathcal F\) is nonzero and centered, each
	summand is at least \(1\). As \(|\mathcal F|=4\), equality holds,
	and each member contributes exactly \(1\). Moreover,
	\begin{equation*}
	\sum_{w\in\mathcal F}\mathbf 1_{\{h_w^+\ne0\}}\le1,
	\qquad
	\sum_{w\in\mathcal F}\mathbf 1_{\{h_w^-\ne0\}}\le1,
	\qquad
	\sum_{w\in\mathcal F}\operatorname{rank}(\Gamma_w)\le2.
	\end{equation*}
	Hence after reindexing, assume $h_{w_1}^+\ne0$ and $h_{w_2}^-\ne0$. Then $\operatorname{rank}(\Gamma_{w_1})=0$, $h_{w_1}^-=0$, and $\operatorname{rank}(\Gamma_{w_2})=0$, $h_{w_2}^+=0$. Since \(\operatorname{rank}\Gamma_{w_1}=0\), we have \(\alpha_{w_1}=\beta_{w_1}=0\), and hence $g_{w_1}^+=g_{w_1}^-=0$. Therefore $w_1=h_{w_1}^+\in V_2^+$. Similarly, $w_2=h_{w_2}^-\in V_2^-$. For \(w_3,w_4\), we have $h_{w_3}^\pm=h_{w_4}^\pm=0$, so $w_3,w_4\in V_1^+\oplus V_1^-$. The equality
	\begin{equation*}
	\sum_{w\in\mathcal F}\operatorname{rank}(\Gamma_w)=2,
	\end{equation*}
	then gives $\operatorname{rank}(\Gamma_{w_3})=\operatorname{rank}(\Gamma_{w_4})=1$. Finally, condition~$\ref{itemiii}$ of Theorem~\ref{thm:zero-classification} implies $\operatorname{col}(\Gamma_{w_3})\perp\operatorname{col}(\Gamma_{w_4})$. Since both spaces are one-dimensional subspaces of \(\mathbb R^2\), their orthogonal direct sum is \(\mathbb R^2\). The converse follows directly from conditions~$\ref{itemi}$--$\ref{itemiii}$ of Theorem~\ref{thm:zero-classification}.
\end{proof}
\section*{Declaration on the use of AI}
The authors used ChatGPT 5.6 Pro to assist in discussing proof strategies, checking proofs, and improving exposition.

\end{document}